\newtheorem{thm}{Theorem}[section]
\newtheorem{cor}[thm]{Corollary}
\newtheorem{prop}[thm]{Proposition}
\newtheorem{conj}[thm]{Conjecture}
\theoremstyle{definition}
\newtheorem{defn}[thm]{Definition}
\theoremstyle{remark}
\newtheorem{rem}[thm]{Remark}
\numberwithin{equation}{section}
\title{Integer Complexity: The Integer Defect}
\author{Harry Altman}
\date{August 1, 2018}
\begin{document}

\newcommand{\cpx}[1]{\|#1\|}
\newcommand{\dft}{\delta}
\newcommand{\st}{{st}}
\newcommand{\xpdd}[1]{\hat{#1}}
\newcommand{\drop}{\Delta}
\newcommand{\badfac}{\kappa}
\newcommand{\cR}{R}
\newcommand{\acl}{\ell}
\newcommand{\fak}[1]{t_{#1}}

\newcommand{\NUM}{{48}}
\newcommand{\TWONUM}{{96}}

\newcommand{\N}{{\mathbb N}}
\newcommand{\R}{{\mathbb R}}
\newcommand{\Z}{{\mathbb Z}}
\newcommand{\Q}{{\mathbb Q}}
\newcommand{\sS}{{\mathcal S}}
\newcommand{\sT}{{\mathcal T}}

\newcommand{\floor}[1]{{\left\lfloor #1 \right\rfloor}}
\newcommand{\ceil}[1]{{\left\lceil #1 \right\ceil}}

\begin{abstract}
Define $\cpx{n}$ to be the \emph{complexity} of $n$, the smallest number of ones
needed to write $n$ using an arbitrary combination of addition and
multiplication.  John Selfridge showed that $\cpx{n}\ge 3\log_3 n$ for all $n$,
leading this author and Zelinsky to define the \emph{defect} of $n$, $\dft(n)$,
to be the difference $\cpx{n}-3\log_3 n$.  Meanwhile, in the study of addition
chains, it is common to consider $s(n)$, the number of small steps of $n$,
defined as $\ell(n)-\lfloor\log_2 n\rfloor$, an integer quantity.  So here we
analogously define $D(n)$, the \emph{integer defect} of $n$, an integer version
of $\dft(n)$ analogous to $s(n)$.  Note that $D(n)$ is not the same as $\lceil
\dft(n) \rceil$.

We show that $D(n)$ has additional meaning in terms of the defect well-ordering
considered in \cite{paperwo}, in that $D(n)$ indicates which powers of $\omega$
the quantity $\dft(n)$ lies between when one restricts to $n$ with $\cpx{n}$
lying in a specified congruence class modulo $3$.  We also determine all numbers
$n$ with $D(n)\le 1$, and use this to generalize a result of Rawsthorne
\cite{Raws}.
\end{abstract}

\maketitle

\section{Introduction}
\label{intro}

The \emph{complexity} of a natural number $n$, denoted $\cpx{n}$, is the least
number of $1$'s needed to write it using any combination of addition and
multiplication, with the order of the operations specified using  parentheses
grouped in any legal nesting.  For instance, $n=11$ has a complexity of $8$,
since it can be written using $8$ ones as \[ 11=(1+1+1)(1+1+1)+1+1,\] but not
with any fewer than $8$.  This notion was implicitly introduced in 1953 by Kurt
Mahler and Jan Popken \cite{MP}, and was later popularized by Richard Guy
\cite{Guy, UPINT}.

Integer complexity is approximately logarithmic; it satisfies the bounds
\begin{equation*}\label{eq1}
3 \log_3 n= \frac{3}{\log 3} \log  n\le \cpx{n} \le \frac{3}{\log 2} \log n  ,\qquad n>1.
\end{equation*}
The lower bound can be deduced from the results of Mahler and Popken, and was
explicitly proved by John Selfridge \cite{Guy}. It is attained with equality for
$n=3^k$ for all $k \ge1$.  The upper bound can be obtained by writing $n$ in
binary and finding a representation using Horner's algorithm. It is not sharp,
and the constant $\frac{3}{\log2} $ can be improved for large $n$ \cite{upbds}.

Based on the lower bound, this author and Zelinsky \cite{paper1} introduced the
notion of the \emph{defect} of $n$, denoted $\dft(n)$, which is the difference
$\cpx{n}-3\log_3 n$.  Subsequent work \cite{paperwo} showed that the set of
defects is in fact a well-ordered subset of the real line, with order type
$\omega^\omega$.

However, it is worth considering the result of Selfridge in more detail:
\begin{thm}[Selfridge]
\label{selfridge}
For any $k\ge1$, let $E(k)$ be the largest number that can be made with $k$
ones, i.e., the largest $n$ with $\cpx{n}\le k$.  Then:
\begin{enumerate}
\item If $k=1$, then $E(k)=1$.
\item If $k\equiv 0\pmod{3}$, then $E(k)=3^{k/3}$.
\item If $k\equiv 1\pmod{3}$ and $k>1$, then $E(k)=4\cdot3^{(k-4)/3}$.
\item If $k\equiv 2\pmod{3}$, then $E(k)=2\cdot3^{(k-2)/3}$.
\end{enumerate}
\end{thm}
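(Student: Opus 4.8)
The plan is to establish the matching lower and upper bounds for $E(k)$ separately, with essentially all the work going into the upper bound. For the lower bound I would just exhibit expressions: $(1+1+1)^{k/3}$ shows $\cpx{3^{k/3}}\le k$ when $3\mid k$; $(1+1)(1+1+1)^{(k-2)/3}$ shows $\cpx{2\cdot 3^{(k-2)/3}}\le k$ when $k\equiv 2\pmod 3$; $(1+1)(1+1)(1+1+1)^{(k-4)/3}$ shows $\cpx{4\cdot 3^{(k-4)/3}}\le k$ when $k\equiv 1\pmod 3$ and $k\ge 4$; and $E(1)=1$ since the only $n$ with $\cpx{n}\le 1$ is $n=1$. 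Writing $f(k)$ for the value claimed for $E(k)$, this gives $E(k)\ge f(k)$ for all $k\ge 1$ with no real effort, so the task is the reverse inequality $E(k)\le f(k)$.

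For that I would use strong induction on $k$ together with the recursion
\[ E(k)=\max_{1\le j\le k-1}\max\bigl(E(j)+E(k-j),\,E(j)E(k-j)\bigr),\qquad k\ge 2. \]
The inequality ``$\ge$'' is immediate from $\cpx{a+b},\cpx{ab}\le\cpx{a}+\cpx{b}$; for ``$\le$'' one takes an optimal expression for $n=E(k)$, notes that its topmost operation is $+$ or $\times$ (as $n\ge E(2)=2$), writes $n=a+b$ or $n=ab$ with the two subexpressions using $p,q\ge 1$ ones and $p+q\le k$, and uses that $E$ is non-decreasing to pass from $(p,q)$ to $(j,k-j)$ with $j=p$. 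Granting the recursion, the inductive step reduces — after substituting $E(j)=f(j)$ and $E(k-j)=f(k-j)$ from the hypothesis — to two purely numerical inequalities about the explicit function $f$:
\[ f(a)f(b)\le f(a+b)\qquad\text{and}\qquad f(a)+f(b)\le f(a+b),\qquad a,b\ge 1, \]
and the only base case then needed is $E(1)=f(1)=1$.

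The crux is the multiplicative inequality $f(a)f(b)\le f(a+b)$. I would prove it by writing $f(m)=c(m)\,3^{m/3}$, noting that $c(m)\le 1$ with equality exactly when $3\mid m$, and checking the residues of $a$ and $b$ modulo $3$: the cases $3\mid a$ or $3\mid b$ are immediate, $a\equiv b\equiv 2$ gives equality, and $a\equiv b\equiv 1$ and $a\equiv 1,\,b\equiv 2$ are one-line numeric comparisons; the degenerate value $m=1$ (where $c(1)=3^{-1/3}$ is even smaller than the generic bound) causes no problem. The additive inequality then follows for free: $f$ is integer-valued and strictly increasing, so $f(1)+f(b)=1+f(b)\le f(b+1)$, while if $a,b\ge 2$ then $f(a),f(b)\ge 2$ and hence $f(a)+f(b)\le f(a)f(b)\le f(a+b)$.

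I expect the main difficulty to be bookkeeping rather than conceptual depth: getting the recursion and its justification exactly right (especially the ``padding'' step when $p+q<k$, which needs $E$ non-decreasing), and making sure the small exceptional values — $E(1)=1$ and $E(4)=4$ (not $3E(1)=3$) — are absorbed cleanly. The observation that $f$ is a strictly increasing integer-valued function handles all of these without any case-by-case treatment of small $k$ beyond $k=1$.
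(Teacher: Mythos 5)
Your proposal is correct, but note that the paper does not actually prove Theorem~\ref{selfridge} at all: it is stated as a known result, attributed to Selfridge via Guy and noted to be a special case of Mahler--Popken, so there is no internal proof to compare against. Your argument is essentially the classical one that the paper itself alludes to later (``the formulae for $E_0(k)$ and $E_1(k)$ \dots were both originally proven directly by induction on $k$''). The key steps all hold up: the recursion $E(k)=\max_{1\le j\le k-1}\max\bigl(E(j)+E(k-j),\,E(j)E(k-j)\bigr)$ is justified correctly (the splitting of an optimal expression gives subexpressions on $p+q=\cpx{E(k)}\le k$ ones, and monotonicity of $E$ lets you pad up to $j+(k-j)=k$); the reduction to the two inequalities $f(a)f(b)\le f(a+b)$ and $f(a)+f(b)\le f(a+b)$ is the right move; and the residue-by-residue verification of the multiplicative inequality checks out numerically (e.g.\ $a\equiv b\equiv 1$ with $a,b>1$ gives $16\cdot 3^{(a+b-8)/3}\le 18\cdot 3^{(a+b-8)/3}$, and $a\equiv b\equiv 2$ gives equality, which is why $E(k)=4\cdot 3^{(k-4)/3}$ rather than $3^{k/3}$ in the $1\bmod 3$ case). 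Your handling of the additive inequality via $f$ being integer-valued and strictly increasing (for $a=1$) and via $x+y\le xy$ for $x,y\ge 2$ (otherwise) is clean and correctly absorbs the exceptional values $E(1)=1$ and $E(4)=4$. The only thing I would ask you to make explicit in a written-up version is the observation that $a+b\ge 2$ ensures the degenerate coefficient $c(1)=3^{-1/3}$ never appears on the right-hand side of $c(a)c(b)\le c(a+b)$, which you have implicitly used.
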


(This result is also a special case of the results of Mahler and Popken
\cite{MP}.)  From this one can of course derive the lower bound $\cpx{n}\ge
3\log_3 n$, but what if one wanted an integer version?  We make the following
definition:

\begin{defn}
Given a natural number $n$, we define $L(n)$ to be the largest $k$ such that
$E(k)\le n$.
\end{defn}

With this, we define:

\begin{defn}
For a natural number $n$, we define the \emph{integer defect} of $n$, denoted
$D(n)$, to be the difference $\cpx{n}-L(n)$.
\end{defn}

Because of Theorem~\ref{selfridge}, $L(n)$ is quite easy to compute (see
Proposition~\ref{computL}), and hence
if one knows $\cpx{n}$ then $D(n)$ is also easy to compute.  Note that while
we consider $D(n)$ to be an integer analogue of $\dft(n)$, it is not in general
equal to
$\lceil \dft(n) \rceil$; see Theorem~\ref{dtoD} for the precise relation.
However it's not immediately obvious that $D(n)$ has any actual significance.
In fact, however, the integer defect of a number tells you about its position in
the well-ordering of defects.

\begin{rem}
$L(k)$ is not the best lower bound we can get from
Theoerem~\ref{selfridge}; that would instead be the smallest $k$ such that
$E(k)\ge n$, which we might denote $L'(n)$.  ($L'(n)$ could also be defined as
the minimum of $\cpx{m}$ over all $m\ge n$.)  For reasons that will become clear
later, though, we will prefer to discuss $L$ rather than $L'$.  In any case,
$L'(n)=L(n)+1$ unless $n=E(k)$ for some $k$, in which case $L'(n)=L(n)=k$, so
one can easily convert any results expressed in the one formulation to the
other. One could consider a similar $D'(n)$ as well, but we will not do
that either.
\end{rem}

\subsection{The sets $\mathscr{D}^0$, $\mathscr{D}^1$, and $\mathscr{D}^2$ and
the main result}
\label{thm1sec}

As has been noted above, if we define $\mathscr{D}$ to be the set of all
defects, then as a subset of the real line this set is well-ordered and has
order type $\omega^\omega$.  However, more specific theorems are proved in
\cite{paperwo}.  We will need the following definition:

\begin{defn}
\label{dadef}
If $a$ is a congruence class modulo $3$, we define
\[ \mathscr{D}^a = \{ \dft(n) : \cpx{n}\equiv a\pmod{3},~~n\ne 1\}. \]
\end{defn}

\begin{rem}
The number $n=1$ is excluded from $\mathscr{D}^1$ because it is dissimilar to
other numbers whose complexity is congruent to $1$ modulo $3$.  Unlike other
numbers which are $1$ modulo $3$, the number $1$ cannot be written as $3j+4$
for some $j\ge0$, and so the largest number that can be made with a single $1$
is simply $1$, rather than $4\cdot 3^j$.
\end{rem}

In fact the sets $\mathscr{D}^a$ for $a=0,1,2$ are disjoint, and so together
with $\{1\}$ form a partition of $\mathscr{D}$.

Moreover in \cite{paperwo} it was proved:
\begin{thm}
\label{wothm}
For $a=0,1,2$, the sets $\mathscr{D}^a$ are all well-ordered, each with order
type $\omega^\omega$.
\end{thm}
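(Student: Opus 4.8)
The plan is to separate the trivial order-type inequality ``$\le\omega^\omega$'' from the substantive one ``$\ge\omega^\omega$''. Every $\mathscr{D}^a$ is a subset of $\mathscr{D}$, which by \cite{paperwo} (recalled in the introduction) is a well-ordered subset of $\R$ of order type $\omega^\omega$; since any subset of a well-ordered set is itself well-ordered with order type at most that of the ambient set, each $\mathscr{D}^a$ is automatically well-ordered with $\mathrm{otp}(\mathscr{D}^a)\le\omega^\omega$. So the whole content is to show $\mathrm{otp}(\mathscr{D}^a)\ge\omega^\omega$ for each of $a=0,1,2$, equivalently that $\mathscr{D}^a$ contains a well-ordered subset of type $\omega^k$ for every $k$.

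First I would pin down the partition. If $\dft(m)=\dft(n)$ with $m\le n$, then $\cpx{n}-\cpx{m}=3\log_3(n/m)$ is an integer; since $\sqrt[3]{3}$ and $\sqrt[3]{9}$ are irrational this forces $n/m$ to be an integer power of $3$, so $\cpx{n}\equiv\cpx{m}\pmod 3$. Hence each defect value lies in exactly one $\mathscr{D}^a$, which together with $\dft(1)=1$ exhausts $\mathscr{D}$; and since $\omega^\omega$ is additively indecomposable, deleting this single point leaves $\mathscr{D}^0\sqcup\mathscr{D}^1\sqcup\mathscr{D}^2$ of order type $\omega^\omega$ still. As $\omega^\omega$ is moreover closed under natural sums, one of the three pieces must already have order type $\omega^\omega$; the point is to obtain all three.

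For that I would invoke the low-defect polynomial machinery of \cite{paper1,paperwo}: for every real $r$ there is a finite list of low-defect pairs $(f,C)$, with $f$ a polynomial in some number $s$ of variables and $\cpx{f(3^{k_1},\dots,3^{k_s})}=k_1+\dots+k_s+C$ for all large $k_i$, such that every $n$ with $\dft(n)<r$ equals $3^j f(3^{k_1},\dots,3^{k_s})$ for one of them --- and such polynomials occur with arbitrarily many variables, which is exactly the mechanism behind $\mathrm{otp}(\mathscr{D})=\omega^\omega$. Fix such an $f$ in $s$ variables. The defects $\dft(f(3^{k_1},\dots,3^{k_s}))$, as $(k_1,\dots,k_s)$ ranges over all large tuples, realize a subset of $\mathscr{D}$ of order type $\ge\omega^s$ (the limiting behaviour of $\dft$ as the $k_i\to\infty$ successively reproduces the lexicographic order on $\N^s$). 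Now restrict to tuples with $k_1+\dots+k_s\equiv c\pmod 3$ for a fixed residue $c$: this is still cofinal, so the order type remains $\ge\omega^s$, while by the exact complexity formula every such number $N$ --- large, hence $\ne1$ --- satisfies $\cpx{N}\equiv c+C\pmod 3$, so its defect lies in $\mathscr{D}^a$ with $a\equiv c+C\pmod 3$. Letting $c$ run over $\{0,1,2\}$ hits all three classes, and letting $s\to\infty$ gives $\mathrm{otp}(\mathscr{D}^a)\ge\sup_s\omega^s=\omega^\omega$ for each $a$, as needed.

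The main obstacle is that this rests on the structure theorem for numbers of bounded defect together with its sharpness --- the \emph{exact} identity $\cpx{f(3^{k_1},\dots,3^{k_s})}=k_1+\dots+k_s+C$, i.e.\ a matching complexity lower bound --- which is the genuinely hard input and the principal technical content behind \cite{paperwo}. Without the exact value, decreasing the $k_i$ could simultaneously perturb the defect and alter its residue class, wrecking the bookkeeping. A more hands-on alternative is to take explicit towers of the shape $3^{j_t}(\cdots(3^{j_1}n_0+1)\cdots)+1$ built from a seed $n_0$ chosen so the residue works out, and argue directly that their complexities equal the evident sums; but the lower bounds this requires are themselves essentially special cases of the structure theorem, so I would run the low-defect-polynomial argument as the main line.
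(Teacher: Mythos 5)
Your upper bound and the disjointness of the $\mathscr{D}^a$ are fine (these are Theorem~\ref{oldprops}(4),(7) plus the fact that a subset of a well-ordered set has order type at most that of the ambient set), and citing the low-defect machinery for the lower bound is the right instinct. But the mechanism you propose for hitting all three congruence classes does not work, and the failure traces to a wrong complexity formula. When $f$ efficiently $3$-represents $N=f(3^{k_1},\ldots,3^{k_s})$, the identity is $\cpx{N}=\cpx{f}+3(k_1+\cdots+k_s)$ --- each factor $3^{k_i}$ costs $3k_i$ ones, not $k_i$ --- so $\cpx{N}\equiv\cpx{f}\pmod 3$ \emph{regardless} of the residue of $k_1+\cdots+k_s$. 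Your step ``restrict to tuples with $k_1+\cdots+k_s\equiv c\pmod 3$\,\ldots\,every such $N$ satisfies $\cpx{N}\equiv c+C\pmod 3$'' is therefore false: a single low-defect polynomial only ever feeds defects into the one class $\mathscr{D}^{\cpx{f}}$, and letting $c$ run over $\{0,1,2\}$ changes nothing. (Numbers in the exceptional set where $\cpx{N}<\cpx{f}+3\sum k_i$ can land in other classes, but those are exactly the ones over which you have no order-type control.)

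The missing idea is to vary the \emph{polynomial} rather than the exponent sum: for each degree $s$ one needs three polynomials whose base complexities realize all three residues mod $3$. This paper (in the lower-bound half of the proof of Theorem~\ref{stabchgoverpt}, which subsumes the statement you are proving; Theorem~\ref{wothm} itself is only quoted here from \cite{paperwo}) takes $f=(\cdots((mx_1+1)x_2+1)\cdots)x_s+1$ with leading coefficient $m\in\{2,3,4\}$, so that $\cpx{f}=\cpx{m}+s$ runs over all residues mod $3$ as $m$ varies, and $\dft(f)<s+1$, which is the hypothesis needed for Proposition~\ref{dump} to guarantee that the non-exceptional tuples already contribute order type $\omega^s$ to $\mathscr{D}^{\cpx{f}}$. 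Your secondary worry --- that exactness of the complexity formula is the hard input --- is legitimate but is handled by Proposition~\ref{dump}; what cannot be repaired within your framework is the congruence bookkeeping, which requires changing the leading coefficient, not the residue of the exponent vector.
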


It is these sets, the $\mathscr{D}^a$,
that $D(n)$ will tell us about the position of $\dft(n)$ in.  We show:

\begin{thm}[Main theorem]
\label{mainthm}
Let $n>1$ be a natural number
Let $\zeta$ be the order type of $\mathscr{D}^{\cpx{n}}\cap[0,\dft(n))$.  Then
$D(n)$ is equal to the smallest $k$ such that $\zeta<\omega^k$.
\end{thm}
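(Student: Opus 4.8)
The plan is to deduce the theorem from two ingredients: an explicit description of $D(n)$ in terms of $\dft(n)$ and $\cpx{n}\bmod 3$, coming from Theorem~\ref{selfridge}; and the order-type stratification of the sets $\mathscr{D}^{a}$ established in \cite{paperwo}.

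First I would make $L(n)$, hence $D(n)=\cpx{n}-L(n)$, completely explicit. Writing $n=3^{j}t$ with $j=\lfloor\log_{3}n\rfloor$ and $t\in[1,3)$, Theorem~\ref{selfridge} (packaged as Proposition~\ref{computL}) gives $L(n)=3j+\epsilon$, where $\epsilon\in\{0,1,2\}$ according to whether $t\in[1,\tfrac{4}{3})$, $t\in[\tfrac{4}{3},2)$, or $t\in[2,3)$, while $\cpx{n}=3j+3\log_{3}t+\dft(n)$. Hence $D(n)=m-\epsilon$ is an explicit function of the integer $m:=\cpx{n}-3j$ (which satisfies $m\equiv\cpx{n}\pmod 3$ and $m=\dft(n)+3\log_{3}t$) and of which of the three sub-intervals of $[1,3)$ contains $t$. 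From this one extracts the structural fact that, within a fixed residue class $a$, the quantity $D(n)$ is a nondecreasing step function of $\dft(n)$: if $\cpx{n}\equiv\cpx{n'}\equiv a\pmod 3$ and $\dft(n')\ge\dft(n)$, then $D(n')\ge D(n)$. (The case $\dft(n)=\dft(n')$ with $\cpx{n}\ne\cpx{n'}$ forces $n'=3^{r}n$ with $n'$ stable over $n$, and then $D(n')=D(n)$ since $E(k+3r)=3^{r}E(k)$ gives $L(3^{r}n)=L(n)+3r$; the strictly increasing case follows from the estimate $L(n)=3\log_{3}n+O(1)$, with the $O(1)$ confined to a fixed half-open interval of width less than $\tfrac{3}{2}$.) Granting this, for each $k\ge 0$ the set $\{\dft(n):\cpx{n}\equiv a\pmod 3,\ D(n)\le k\}$ is an initial segment of $\mathscr{D}^{a}$, equal to $\mathscr{D}^{a}\cap[0,r^{a}_{k})$ for an explicit real threshold $r^{a}_{k}$, and the $k$-th block $\mathscr{B}^{a}_{k}:=\{\dft(n):\cpx{n}\equiv a,\ D(n)=k\}$ equals $\mathscr{D}^{a}\cap[r^{a}_{k-1},r^{a}_{k})$.

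Second, I would invoke \cite{paperwo} to show that each initial segment $\mathscr{D}^{a}\cap[0,r^{a}_{k})$ has order type exactly $\omega^{k}$, equivalently that each block $\mathscr{B}^{a}_{k}$ has order type $\omega^{k}$. For $k=0$ this is immediate: $D(n)=0$ forces $E(\cpx{n})\le n$, hence $n=E(\cpx{n})$, so $\mathscr{B}^{a}_{0}$ is the single point $\dft(E(\cpx{n}))=\min\mathscr{D}^{a}$. For $k\ge 1$ one must identify the thresholds $r^{a}_{k}$ produced in the first step with the critical defect values governing the order-type stratification of $\mathscr{D}^{a}$ in \cite{paperwo}, and confirm that the $k$-th resulting initial segment is the one of order type $\omega^{k}$. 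I expect this identification---reconciling the bookkeeping of $D$ via Selfridge's theorem with the bookkeeping of order types via the low-defect machinery of \cite{paperwo}---to be the main obstacle; the monotonicity lemma of the first step is the other point requiring care.

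With these facts in hand the theorem follows by ordinal arithmetic. Fix $n>1$, put $a=\cpx{n}\bmod 3$ and $k=D(n)$, and let $\zeta$ be the order type of $\mathscr{D}^{a}\cap[0,\dft(n))$. If $k=0$ then $\dft(n)=\min\mathscr{D}^{a}$, so $\zeta=0<\omega^{0}$ and $0$ is the least exponent with this property. If $k\ge 1$, then $\dft(n)\in\mathscr{B}^{a}_{k}=\mathscr{D}^{a}\cap[r^{a}_{k-1},r^{a}_{k})$; since $\dft(n)\ge r^{a}_{k-1}$, the set $\mathscr{D}^{a}\cap[0,\dft(n))$ contains $\mathscr{D}^{a}\cap[0,r^{a}_{k-1})$, which has order type $\omega^{0}+\omega^{1}+\dots+\omega^{k-1}=\omega^{k-1}$, so $\zeta\ge\omega^{k-1}$. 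On the other hand $\dft(n)$ is an element of the well-ordered set $\mathscr{D}^{a}\cap[0,r^{a}_{k})$ of order type $\omega^{k}$, and $\mathscr{D}^{a}\cap[0,\dft(n))$ is exactly the set of its predecessors there (as $\dft(n)<r^{a}_{k}$), so $\zeta<\omega^{k}$. Thus $\omega^{k-1}\le\zeta<\omega^{k}$, which is precisely the statement that $k$ is the least exponent with $\zeta<\omega^{k}$, as required.
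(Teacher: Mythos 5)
Your overall architecture is the same as the paper's: first show that, within a fixed residue class $a$ of $\cpx{n}$ modulo $3$, the quantity $D(n)$ is a nondecreasing step function of $\dft(n)$ with explicit real thresholds (this is the paper's Theorem~\ref{dtoD}, and your computation of $L(n)$ by writing $n=3^jt$ with $t\in[1,3)$ is a correct route to it); then show that the initial segment of $\mathscr{D}^a$ below the $k$-th threshold has order type exactly $\omega^k$; then conclude by the ordinal argument in your last paragraph, which is sound.

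The gap is the second step, and it is not a bookkeeping matter but the entire content of the theorem. You propose to ``invoke \cite{paperwo}'' for the statement that $\mathscr{D}^a\cap[0,r^a_k)$ has order type $\omega^k$, where the thresholds work out to $k$, $k+\dft(2)$, or $k+2\dft(2)$ according to $k-a$ modulo $3$. But \cite{paperwo} proves this only for the undivided set $\mathscr{D}$ and for $\mathscr{D}^a$ when $k\equiv a\pmod{3}$; for the other two residues it is new --- it is Theorem~\ref{chgoverpt} of this paper --- and the introduction is explicit that the degree bound $\deg f\le s$ available from \cite{paperwo} is not strong enough to yield it. The actual proof takes a good covering of $B_{\fak{a}(k)}$ and applies the refined inequality $\dft(f)\ge\deg f+\dft(m)$ from \cite{theory} ($m$ the leading coefficient): for a polynomial of top degree $k$ this forces $\dft(m)\le 2\dft(2)$, hence $m\in\{3^\ell,2\cdot3^\ell,4\cdot3^\ell\}$ by Proposition~\ref{small3}, which pins down $\cpx{f}$ modulo $3$ and shows that only polynomials with $\dft(f)$ exactly equal to the threshold contribute a full $\omega^k$ to the class $a$; Propositions~\ref{dump}, \ref{dumptype} and \ref{interleave} then assemble the upper bound, and the explicit family $(\cdots((mx_1+1)x_2+1)\cdots)x_k+1$ with $m\in\{2,3,4\}$ chosen by congruence supplies the lower bound. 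You correctly flag this identification as ``the main obstacle,'' but as written the proposal leaves it unproved, so the argument is incomplete precisely where the theorem's difficulty lies.
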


As mentioned above, $D(n)$ is easy to compute, so this theorem gives an way to
easily compute where around $\dft(n)$ falls in the ordering on $\mathscr{D}^a$.

We will also prove a version of this theorem for the stable integer defect; see
Sections~\ref{secstab} and \ref{secdefn}.

It's worth comparing this theorem to what was already known.  It was proved in
\cite{paperwo} that
the limit of the initial $\omega^k$ elements of $\mathscr{D}$ is equal to $k$.
This raises the question -- just what is the
limit of the initial $\omega^k$ elements of $\mathscr{D}^a$?  It was further
shown in \cite{paperwo} that when $k\equiv a\pmod{3}$ this limit is equal to
$k$, but what about otherwise?

In this paper we will answer this question:
\begin{thm}
\label{chgoverpt}
The limit of the initial $\omega^k$ elements of $\mathscr{D}^a$ is equal to $k$
if $k-a\equiv 0\pmod{3}$; it is equal to $k+\dft(2)$ if $k-a\equiv 1\pmod{3}$;
and it is equal to $k+2\dft(2)$ if $k-a\equiv 2\pmod{3}$.
\end{thm}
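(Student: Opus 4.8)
The plan is to use Theorem~\ref{mainthm} to reduce the statement to a single supremum computation, and then establish matching upper and lower bounds. In any well-ordered subset $W$ of $\R$, the initial $\omega^k$ elements are exactly those $x\in W$ with $\mathrm{ot}(W\cap[0,x))<\omega^k$. Taking $W=\mathscr{D}^a$ and applying Theorem~\ref{mainthm} — which asserts precisely that, for $x=\dft(n)$ with $n>1$ and $\cpx{n}\equiv a\pmod 3$, this order type is $<\omega^k$ if and only if $D(n)\le k$ — the initial $\omega^k$ elements of $\mathscr{D}^a$ form the set $\{\dft(n):n>1,\ \cpx{n}\equiv a\pmod 3,\ D(n)\le k\}$. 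So the limit in question equals
\[ S(a,k):=\sup\{\dft(n):n>1,\ \cpx{n}\equiv a\pmod 3,\ D(n)\le k\}, \]
and the theorem becomes the assertion that $S(a,k)$ has the stated value.

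For the upper bound, fix such an $n$ and write $\ell=L(n)$, $d=D(n)$, so $\cpx{n}=\ell+d$ and hence $\ell\equiv a-d\pmod 3$; since $n>1$ we have $\ell\ge 2$. By Theorem~\ref{selfridge}, $\epsilon(\ell):=\ell-3\log_3 E(\ell)$ equals $0$, $2\dft(2)$, or $\dft(2)$ according as $\ell\equiv 0$, $1$, or $2\pmod 3$, and from $n\ge E(\ell)$ we get $\dft(n)=\ell+d-3\log_3 n\le d+\epsilon(\ell)$, with equality only when $n=E(\ell)$, i.e.\ only when $d=0$. Since $\ell\equiv a-d$, the quantity $d+\epsilon(\ell)$ depends only on $d$ (and $a$), and a short check — as $d$ increases by $1$ the successive increments cycle through $1+\dft(2)$, $1+\dft(2)$, $1-2\dft(2)$, all positive since $2\dft(2)<1$ — shows it is strictly increasing in $d$. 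Hence $\dft(n)\le k+\epsilon(\ell_k)$ where $\ell_k\equiv a-k\pmod 3$, which is $k$, $k+\dft(2)$, or $k+2\dft(2)$ according to the residue of $k-a$ modulo $3$; this is the claimed value, attained only when $k=0$. So $S(a,k)$ is at most the claimed value.

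For the lower bound, the case $k-a\equiv 0\pmod 3$ is the result of \cite{paperwo} already quoted in the introduction. When $k-a\equiv 1$ (resp.\ $k-a\equiv 2$) put $c=2$ (resp.\ $c=4$) and consider $n=c\prod_{i=1}^{k}(3^{a_i}+1)$ for integers $a_1,\dots,a_k\ge 1$. Since $\cpx{3^a+1}=3a+1$ (immediate from Selfridge's lower bound), $\cpx{n}\le\cpx{c}+k+3\sum_i a_i\equiv a\pmod 3$; and since $E(\cpx{c}+3\sum_i a_i)=c\cdot 3^{\sum_i a_i}\le n$, we have $L(n)\ge\cpx{c}+3\sum_i a_i$, hence $D(n)\le k$. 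Granting the complexity identity $\cpx{n}=\cpx{c}+k+3\sum_i a_i$, a direct computation gives
\[ \dft(n)=\dft(c)+k-3\sum_{i=1}^{k}\log_3(1+3^{-a_i}), \]
which lies strictly below $k+\dft(c)$ and tends to $k+\dft(c)=k+\dft(2)$ (resp.\ $k+2\dft(2)$) as $\min_i a_i\to\infty$. Therefore $S(a,k)\ge k+\dft(c)$, matching the upper bound.

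The one genuinely nontrivial input is the complexity identity $\cpx{c\prod_{i=1}^k(3^{a_i}+1)}=\cpx{c}+k+3\sum_i a_i$ for the $a_i$ large — a lower bound on integer complexity, the hard direction, for which Selfridge's bound alone only yields the $k=0$ case. I would deduce it from the low-defect polynomial machinery of \cite{paperwo}: $c\prod_i(x_i+1)$ is a low-defect polynomial, and the corresponding theorem there guarantees that its value at $(3^{a_1},\dots,3^{a_k})$ has complexity equal to its base complexity plus $3\sum_i a_i$ once the $a_i$ are sufficiently large. (One could instead try to transport the approaching sequence for the congruent case through multiplication by $c$, but bounding $\cpx{cn}$ for those members meets the same difficulty, so the explicit family above is preferable.)
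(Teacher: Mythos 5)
There is a fatal circularity in your reduction. You invoke Theorem~\ref{mainthm} to identify the initial $\omega^k$ elements of $\mathscr{D}^a$ with $\{\dft(n): n>1,\ \cpx{n}\equiv a\pmod 3,\ D(n)\le k\}$, but in this paper Theorem~\ref{mainthm} is itself \emph{deduced from} Theorem~\ref{chgoverpt} (together with Theorem~\ref{dtoD}); the introduction says so explicitly, and the proof of Theorem~\ref{mainthmstab} consists of nothing but combining Theorem~\ref{dtoD} with Theorem~\ref{stabchgoverpt}. By Theorem~\ref{dtoD} alone, the set you compute the supremum of is exactly $\mathscr{D}^a\cap[0,\fak{a}(k)]$, so your upper and lower bounds establish only that $\sup\bigl(\mathscr{D}^a\cap[0,\fak{a}(k)]\bigr)=\fak{a}(k)$ --- a true but much weaker statement. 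The actual content of Theorem~\ref{chgoverpt} is that this set \emph{is} the initial $\omega^k$ segment, i.e.\ that $\mathscr{D}^a\cap[0,\fak{a}(k))$ has order type exactly $\omega^k$, and the hard half of that is the upper bound on the order type. Your argument nowhere addresses it: that step requires taking a good covering of $B_{\fak{a}(k)}$ (Theorem~\ref{theory}), using the inequality $\dft(f)\ge\dft(m)+\deg f$ of Proposition~\ref{polycpxbd} to pin down the leading coefficients of the degree-$k$ polynomials in the covering via Proposition~\ref{small3}, and then checking with Propositions~\ref{dump}, \ref{dumptype} and \ref{interleave} that only the polynomials with $\dft(f)=\fak{a}(k)$ can contribute cofinally to the congruence class $a$.

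Two smaller remarks. Your lower-bound family $c\prod_{i=1}^k(3^{a_i}+1)$ is sound and essentially matches the paper's (which uses the nested polynomial $(\cdots(mx_1+1)x_2+1)\cdots)x_k+1$ with $m\in\{2,3,4\}$ and $\dft(f)=\fak{a}(k)$), and appealing to the low-defect machinery for the complexity identity is the right instinct; but the machinery (Proposition~\ref{dump}) does not give the identity ``once the $a_i$ are sufficiently large'' --- it gives it off an exceptional set whose image of defects has order type less than $\omega^k$, which is still enough to make the good values cofinal in $[0,\fak{a}(k))$. Also, your direct computation of the upper bound $\dft(n)\le\fak{a}(k)$ from $D(n)\le k$ is correct, but it is just a re-derivation of one direction of Theorem~\ref{dtoD}, not new leverage on the order-type question.
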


In fact, Theorem~\ref{chgoverpt} will be used to prove
Theorem~\ref{mainthm}.  See Section~\ref{mainsec} for more general
statements.  Further generalizations will appear in a future paper \cite{stab}.

\subsection{Generalizing Rawsthorne's theorem}
\label{rawsintro}

We know how to compute $E(k)$, the highest number of complexity at most $k$ (or
exactly $k$), but what about the next highest?  This question was answered by
Daniel Rawsthorne \cite{Raws} in 1989:

\begin{thm}[Rawsthorne]
\label{rawsthm}
For any $k\ge 8$, the highest number of complexity at most $k$ other than
$E(k)$ itself is $\frac{8}{9}E(k)$, and this number has complexity exactly $k$.
\end{thm}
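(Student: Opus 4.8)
The plan is to deduce this from the classification of all $n$ with $D(n)\le 1$ obtained separately in this paper. First I would check that $\frac{8}{9}E(k)$ is an integer of complexity exactly $k$. By Theorem~\ref{selfridge}, $E(k)=2^{\epsilon}\cdot 3^{j}$ where $\epsilon\in\{0,2,1\}$ and $j=k/3,\ (k-4)/3,\ (k-2)/3$ according as $k\equiv 0,1,2\pmod 3$; hence $\frac{8}{9}E(k)=2^{\epsilon+3}\cdot 3^{j-2}$, whose standard representation uses $2(\epsilon+3)+3(j-2)=2\epsilon+3j=k$ ones. This needs $j\ge 2$, i.e.\ $k\ge 6,10,8$ in the three classes respectively, so it is exactly the hypothesis $k\ge 8$ that is required. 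Since $E(k-1)/E(k)\in\{\frac23,\frac34,\frac23\}$ according to $k\bmod 3$, we have $\frac89 E(k)>\frac34 E(k)\ge E(k-1)$, so $\frac89 E(k)$ cannot be built with $k-1$ ones and therefore has complexity exactly $k$. In particular $\frac89 E(k)$ is a number of complexity $\le k$ other than $E(k)$, so writing $m$ for the largest such number we have $m\ge\frac89 E(k)$, and it remains to prove $m\le\frac89 E(k)$.

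The key point is that $m$ is forced to have integer defect $1$. Indeed $m\ge\frac89 E(k)>E(k-1)$ gives $L(m)\ge k-1$, while $\cpx{m}\le k$ by the choice of $m$; hence $D(m)=\cpx{m}-L(m)\le 1$. Moreover $D(m)=0$ would force $m=E(\cpx{m})=E(L(m))\in\{E(k-1),E(k)\}$, impossible since $E(k-1)<\frac89 E(k)\le m<E(k)$; so $D(m)=1$, and then $L(m)=\cpx{m}-1\ge k-1$ together with $\cpx{m}\le k$ forces $\cpx{m}=k$. Now I would invoke the classification of the $n$ with $D(n)\le 1$: these lie in finitely many families of the shape $c\cdot 3^{a}$. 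For each family one writes down explicitly its largest member of complexity $\le k$, and a finite comparison (which depends on $k\bmod 3$ through the floors that occur in ``$c\cdot 3^a$ has complexity $\le k$'') shows that the two largest values produced are $E(k)$ itself and $\frac89 E(k)=2^{\epsilon+3}\cdot 3^{j-2}$ — the latter coming from the family $8\cdot 3^a$, $32\cdot 3^a$, or $16\cdot 3^a$ according to the class of $k$ — with no family landing strictly in between (the nearest competitors, such as $3^a$ and $7\cdot 3^a$, top out at $\frac34 E(k)$ or below). This yields $m=\frac89 E(k)$, and we have already seen $\cpx{m}=k$.

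I expect the main obstacle to be precisely this last comparison: verifying that none of the families $c\cdot 3^{a}$ produces a value in the open interval $(\frac89 E(k),E(k))$. This is delicate near the bottom of the range, $k\in\{8,9,10\}$, where the finitely many sporadic small members of the $D\le 1$ list have to be inspected directly and where the ``replace $3^{2}$ by $2^{3}$'' swap underlying $\frac89 E(k)$ is not available for smaller $k$ (for instance $\frac89 E(7)$ is not even an integer) — which is again why $k\ge 8$ is imposed.

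Alternatively one can bypass the classification and argue directly by strong induction on $k$ (with $k\in\{8,9,10\}$ as base cases, also recording the weaker bound that the second largest number of complexity $\le c$ is $\le\frac89 E(c)$ for $2\le c\le 7$). Given a hypothetical $m$ with $\frac89 E(k)<m<E(k)$ and $\cpx{m}\le k$, take a top-level decomposition $m=a+b$ or $m=ab$. The additive case is immediate, a short case analysis on $\cpx{a},\cpx{b}$ giving $m<\frac89 E(k)$. In the multiplicative case $m=ab$ with $a\ge b\ge 2$, one uses $m\le E(\cpx{a})E(\cpx{b})$ together with the identity $E(i)E(j)=\frac89 E(i+j)$ when $i\equiv j\equiv 1$ or $\{i\bmod 3,j\bmod 3\}=\{1,2\}$, and $E(i)E(j)=E(i+j)$ otherwise; in these remaining ``$=E(i+j)$'' configurations, either $3\mid m$ with $\cpx{m/3}\le\cpx{m}-3$, so the inductive hypothesis applied to $m/3>\frac89 E(k-3)$ forces $m/3=E(k-3)$ and hence $m=E(k)$, a contradiction, or else one of $a,b$ is at most $\frac89$ of its maximal value, whence $m\le\frac89 E(\cpx{a}+\cpx{b})\le\frac89 E(k)$, again a contradiction. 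I would present whichever of these two arguments reads more cleanly in context, with the other relegated to a remark.
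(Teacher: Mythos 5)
The paper does not actually prove this theorem: it is imported verbatim from Rawsthorne \cite{Raws}, and the paper explicitly flags the relevant dependency in Section~\ref{rawsintro} --- the defect-classification machinery used here (Theorem~31 of \cite{paper1}, which underlies Theorem~\ref{defect1}) ``depends on Rawsthorne's formula for $E_1(k)$ to serve as a base case.'' This makes your primary route circular within this paper's logical structure: you propose to read off the answer from the classification of $n$ with $D(n)\le 1$, but Theorem~\ref{defect1} is proved by citing the classification of numbers of defect below $12\dft(2)$ from \cite{paper1}, and that classification already uses Theorem~\ref{rawsthm} as an input. Your opening computations are fine (that $\frac{8}{9}E(k)=2^{\epsilon+3}3^{j-2}$ is an integer exactly when $k\ge 8$ in the relevant residue classes, that it exceeds $E(k-1)$ since $\frac89>\frac34$, and that the maximal competitor $m$ is forced to have $D(m)=1$ and $\cpx{m}=k$), but the step ``now invoke the classification of $n$ with $D(n)\le 1$'' cannot be taken as given here without assuming the theorem being proved.

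Your fallback argument --- strong induction on $k$ with a top-level decomposition $m=a+b$ or $m=ab$, the identity $E(i)E(j)=\frac89E(i+j)$ or $E(i+j)$ according to $(i\bmod 3,j\bmod 3)$, and the strengthened inductive statement that every number of complexity at most $c$ other than $E(c)$ is at most $\frac89E(c)$ (checked directly for $2\le c\le 7$) --- is the viable, non-circular route, and is essentially Rawsthorne's original proof, which the paper notes was ``proven directly by induction on $k$.'' The sketch is sound in outline, though two points are glossed: the additive case is not quite ``immediate'' (when one summand is $1$ you need $E(k-1)+1<\frac89E(k)$, which uses $E(k-1)\le\frac34E(k)$ and $E(k)$ large enough; when both summands have complexity at least $2$ you need $2E(k-2)\le E(k)$ plus a little more), and in the multiplicative case the subcase where both factors are maximal but $E(\cpx{a})E(\cpx{b})=E(\cpx{a}+\cpx{b})$ needs the observation that this forces $m=E(\cpx{m})$, hence $m\le E(k-1)$ or $m=E(k)$, either way a contradiction. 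If you present the induction as the proof and relegate the $D(n)\le1$ derivation to a remark, the logical order must be: Rawsthorne first, classification of small defects second.
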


In this paper we generalize this result.  First, a definition:

\begin{defn}
Given $r\ge 0$ and $k\ge 1$, we define $E_r(k)$ to be the $r$'th largest number
of complexity at most $k$.  We will $0$-index here, so that by definition
$E_0(k)=E(k)$, and Theorem~\ref{rawsthm} gives a formula for $E_1(k)$.
\end{defn}

Then, with this, we show:

\begin{thm}
\label{tablethm}
Given $r\ge 0$, and $a$ a congruence class modulo $3$, there exists $K_{r,a}>1$
and $h_{r,a}\in\mathbb{Q}$ such that for $k\ge K_{r,a}$ with $k\equiv a
\pmod{3}$, we have $E_r(k)=h_{r,a} E(k)$, and these $h_{r,a}$ and $K_{r,a}$ are
as given by Tables~\ref{table0}, \ref{table2}, and \ref{table1}. Moreover, for
such $r$ and $k$, we have $E_r(k)>E(k-1)$ and therefore $\cpx{E_r(k)}=k$ (and
thus for such $r$ and $k$, $E_r(k)$ is not just the $r$'th largest number with
complexity at most $k$, but the $r$'th largest number with complexity exactly
$k$).
\end{thm}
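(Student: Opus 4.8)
The plan is to work with the notion of defect throughout, since $E_r(k)$ being a fixed rational multiple of $E(k)$ is really a statement that the numbers of small defect whose complexity is in a fixed congruence class $a\pmod 3$ stabilize into finitely many ``arithmetic progressions'' $c\cdot 3^j$. First I would translate the problem: finding the $r$'th largest $n$ with $\cpx n\le k$ is, for $k$ large relative to $r$, the same as finding the $n$ with $\cpx n\le k$ whose defect $\dft(n)$ is the $r$'th smallest among all defects of numbers of complexity $\equiv a\pmod 3$ (this uses that $n\mapsto 3\log_3 n$ is increasing, so larger $n$ of the same or smaller complexity means smaller defect, together with the congruence restriction coming from $L(n)$ and the structure of $E(k)$ in Theorem~\ref{selfridge}). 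So the combinatorial heart is: enumerate, in increasing order, the smallest finitely many elements of $\mathscr D^a$, and show each is realized by numbers of the form $c\cdot 3^j$ for all large $j$, with $c$ depending only on the rank.

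The key input is the well-ordering structure from \cite{paperwo} together with Theorem~\ref{chgoverpt} and Theorem~\ref{mainthm}: the elements of $\mathscr D^a$ below any fixed bound form a finite set, and more precisely the numbers $n$ with $\dft(n)$ small (equivalently $D(n)$ small, by Theorem~\ref{mainthm}) have a restricted multiplicative form. I would invoke (or re-derive in the needed form) the classification of $n$ with small defect: such $n$ are, up to the leading power of $3$, drawn from a fixed finite ``stable'' list, because low-defect numbers are built from a bounded-complexity ``core'' times a power of $3$. Concretely, for each congruence class $a$ and each rank $r$, there is a finite set of candidate cores $c$, and for $j$ large the number $c\cdot 3^j$ has complexity $\cpx c + 3j$ and defect $\dft(c)$; ordering these defects $\dft(c)$ (and breaking ties — which is where the congruence class $a$ selects which cores are even eligible, since $\cpx{c\cdot 3^j}\equiv a$ forces $\cpx c\equiv a\pmod 3$, and where Theorem~\ref{chgoverpt} pins down the changeover points $\dft(2)$, $2\dft(2)$) gives the ordered list whose $r$'th entry is $h_{r,a}$. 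The threshold $K_{r,a}$ is then just how large $k$ must be so that (i) all $r+1$ relevant cores have reached their stable behavior, i.e. $3j$ dominates, and (ii) no ``sporadic'' small-complexity number below the relevant size sneaks into the ranking; both are finite checks. The final clause $E_r(k)>E(k-1)$, hence $\cpx{E_r(k)}=k$, follows because $E(k-1)/E(k)$ is $1/3$, $3/4$, or $2/3$ depending on the class (again Theorem~\ref{selfridge}), and each $h_{r,a}$ appearing in the tables exceeds the relevant one of these ratios — a direct comparison with the tabulated values.

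I would carry this out in steps: (1) set up the defect reformulation and reduce to enumerating small elements of $\mathscr D^a$; (2) establish the finiteness and the ``core times power of $3$'' normal form for low-defect numbers, citing \cite{paperwo} and the low-$D$ classification promised in the abstract (the $D(n)\le 1$ analysis, extended as far as the largest $r$ in the tables requires); (3) compute the ordered list of stable defects in each class, using Theorem~\ref{chgoverpt} to locate where the three ``sub-sequences'' (complexity $\equiv a$, $\equiv a+1$, $\equiv a+2$ cores, contributing offsets $0$, $\dft(2)$, $2\dft(2)$) interleave — this produces the table entries $h_{r,a}$; (4) extract $K_{r,a}$ as the stabilization threshold and verify the $E(k-1)$ comparison. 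The main obstacle I anticipate is step (3): correctly interleaving the three offset-shifted copies of the defect sequence so that the ranks line up with the claimed rational multiples, and in particular handling ties among defects $\dft(c)$ coming from different cores (e.g. distinguishing when two cores give literally equal defect versus merely nearby defect), since a miscounted tie shifts every subsequent table entry. Verifying that the tables are exhaustive — that no low-defect number has been overlooked — will require the full strength of the low-$D$ classification rather than just $D(n)\le 1$, so the bookkeeping there, while routine in spirit, is where errors would hide.
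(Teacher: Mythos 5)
Your proposal follows essentially the same route as the paper: reduce to comparing the ratios $R(n)=n/E(k)$ (equivalently the defects) of numbers of complexity $k$, read the ordered list of candidate ratios $h_{r,a}$ off the classification of numbers with $D(n)\le 1$ (Theorem~\ref{defect1}), and then take $K_{r,a}$ to be the threshold past which every candidate of rank $\le r$ is genuinely realized at complexity exactly $k$. The paper packages that last step as Proposition~\ref{v3lem}: the only obstruction is integrality of $h_sE(k)$ for $s\le r$ (a $3$-adic valuation condition on the $h_s$, which is your ``the core $c$ has appeared'' condition in disguise), and integrality automatically forces $\cpx{h_sE(k)}=k$ and $h_sE(k)>E(k-1)$ via the stability fact that $D(n)\le1$ iff $D_\st(n)\le 1$ (Corollary~\ref{D=1cor}); this is what makes $K_{r,a}$ exactly the integrality threshold rather than some larger stabilization threshold. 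One correction to your closing worry: exhaustiveness of the tables for \emph{every} $r$, including the infinite families, needs only the $D(n)\le1$ classification and nothing deeper. The last clause of the theorem is self-reinforcing here: for $k\ge K_{r,a}$ each $E_s(k)$ with $s\le r$ exceeds $E(k-1)$, hence has $D\le 1$ by Proposition~\ref{Dinterp}, so the entire ranking down to rank $r$ takes place inside the initial $\omega$ of $\mathscr{D}^a$; the infinite families in the tables are precisely the $R$-values of the families $2^a(2^b3^\ell+1)3^k$ and $2^a3^k$ from Theorem~\ref{defect1}, sorted within each congruence class. Pursuing a $D\le 2$ classification would have been unbounded and unnecessary work.
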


\begin{table}[htb]
\caption{Table of $h_r$ and $K_r$ for $k\equiv 0\pmod{3}$.}
\label{table0}
\begin{tabular}{|r|r|r|}
\hline
$r$ & 					$h_{r,0}$ & 	$K_{r,0}$ \\
\hline
$0$ &					$1$ & 		$3$ \\
$1$ &					$8/9$ &		$6$ \\
$2$ &					$64/81$ &	$12$ \\
$3$ &					$7/9$ &		$12$ \\
$4$ &					$20/27$ &	$12$ \\
$5$ &					$19/27$ &	$12$ \\
$6$ &					$512/729$ &	$18$ \\
$7$ &					$56/81$ &	$18$ \\
$8$ &					$55/81$ &	$18$ \\
$9$ &					$164/243$ &	$18$ \\
$10$ &					$163/243$ &	$18$ \\
$(\mathrm{for}\: n\ge6) \quad 2n-1$ &	$2/3+2/3^n$ &	$3n$ \\
$(\mathrm{for}\: n\ge6) \quad 2n$ &	$2/3+1/3^n$ &	$3n$ \\
\hline
\end{tabular}
\end{table}

\begin{table}[htb]
\caption{Table of $h_r$ and $K_r$ for $k\equiv 2\pmod{3}$.}
\label{table2}
\begin{tabular}{|r|r|r|}
\hline
$r$ &					$h_{r,2}$ &		$K_{r,2}$ \\
\hline
$0$ &					$1$ &			$2$ \\
$1$ &					$8/9$ &			$8$ \\
$2$ &					$5/6$ &			$8$ \\
$3$ &					$64/81$ &		$14$ \\
$4$ &					$7/9$ &			$14$ \\
$5$ &					$20/27$ &		$14$ \\
$6$ &					$13/18$ &		$14$ \\
$7$ &					$19/27$ &		$14$ \\
$8$ &					$512/729$ &		$20$ \\
$9$ &					$56/81$ &		$20$ \\
$10$ &					$37/54$ &		$20$ \\
$11$ &					$55/81$ &		$20$ \\
$12$ &					$164/243$ &		$20$ \\
$13$ &					$109/162$ &		$20$ \\
$14$ &					$163/243$ &		$20$ \\
$(\mathrm{for}\: n\ge 6)\quad3n-3$ &	$2/3+2/3^n$ &		$3n+2$ \\
$(\mathrm{for}\: n\ge 6)\quad3n-2$ &	$2/3+1/(2\cdot3^{n-1})$&$3n+2$ \\
$(\mathrm{for}\: n\ge 6)\quad3n-1$ &	$2/3+1/3^n$ &		$3n+2$ \\
\hline
\end{tabular}
\end{table}

\begin{table}[htb]
\caption{Table of $h_r$ and $K_r$ for $k\equiv 1\pmod{3}$ with $k>1$.}
\label{table1}
\begin{tabular}{|r|r|r|}
\hline
$r$ &					$h_{r,1}$ &		$K_{r,1}$ \\
\hline
$0$ &					$1$ &			$4$ \\
$1$ &					$8/9$ &			$10$ \\
$2$ &					$5/6$ &			$10$ \\
$3$ &					$64/81$ &		$16$ \\
$4$ &					$7/9$ &			$16$ \\
$5$ &					$41/54$ &		$16$ \\
$(\mathrm{for}\: n\ge 4)\quad n+2$ &	$3/4+1/(4\cdot3^n)$ &	$3n+4$ \\
\hline
\end{tabular}
\end{table}

Note that Tables~\ref{table0}, \ref{table2}, and \ref{table1} don't list the
regular pattern in the $h_{r,a}$ until such point as $K_{r,a}$ also becomes
regular; for tables based solely on $h_{r,a}$, see Tables~\ref{table0r},
\ref{table2r}, and \ref{table1r}.

What does Theorem~\ref{tablethm} have to do with integer defect?  Well, the
numbers $h_{r,a}E(k)$ appearing in this theorem are almost exactly the numbers
$n$ with $D(n)\le1$; see Proposition~\ref{coincicor} for a precise statement.

After all, by Theorem~\ref{mainthm}, the numbers $n$ with $D(n)\le 1$ are
precisely those $n$ whose $\dft(n)$ lie in the initial $\omega$ of
$\mathscr{D}^{\cpx{n}}$.  So if one fixes a particular $k$, then going down the
set of $n$ with $\cpx{n}=k$ corresponds to going up the set of defects $\dft(n)$
of $n$ with $\cpx{n}=k$; and assuming $k$ is large enough relative to how far up
or down you want to go, this is just looking at $\mathscr{D}^k$.  And if we
count up one at a time, then -- again, assuming $k$ is sufficiently large
relative to how far out we count -- we will stay within the initial $\omega$ of
$\mathscr{D}^k$.  So with a classification of numbers $n$ such that $D(n)\le 1$,
one can determine the $E_r(k)$.  (Indeed, one can also do the reverse.)

Note that Theorem~\ref{rawsthm} also works for $k=6$, so if one wants to break
it down by the residue of $k$ modulo $3$, one could say it works for $k\ge 6$
with $k\equiv 0\pmod{3}$, for $k\ge 8$ with $k\equiv 2\pmod{3}$, and for $k\ge
10$ with $k\equiv 1\pmod{3}$.  (Indeed, this is what we have done in
Tables~\ref{table0}, \ref{table2}, and \ref{table1}.)  Note how all three of
these correspond to $k$ exactly large enough for $E(k)$ to be divisible by $9$,
as per the last part of Theorem~\ref{tablethm}.

One thing worth noting here is that the formulae for $E_0(k)$ and $E_1(k)$, as
originally proven by Selfridge and Rawsthorne respectively, were both originally
proven directly by induction on $k$.  Whereas here we have proven
Theorem~\ref{tablethm} by a different method, namely, analysis of defects.
(Although this analysis of defects in turn depends on Rawsthorne's formula for
$E_1(k)$ to serve as a base case; see \cite{paper1}.)  This raises the question
of whether a similar inductive proof for general $E_r(k)$ could be done now that
the formulae for them are known.  (In fact this author originally proved these
formulae by a different method entirely, that of analyzing certain
transformations of expression, so other methods certainly are possible.)

\subsection{Low-defect polynomials and numbers of low defect}
\label{intropoly}

In order to prove Theorem~\ref{mainthm}, we make use of the idea of
low-defect polynomials from \cite{paperwo,theory}.  A low-defect polynomial is a
particular type of multilinear polynomial; see Section~\ref{polysec} for
details.  In \cite{paperwo} it is proved that, given any positive real number
$s$, one can write down a finite set of low-defect polynomials $\sS$
such that every number $n$ with $\dft(n)<s$ can be written in the form
$f(3^{n_1},\ldots,3^{n_d})3^{n_{d+1}}$ for some $f\in\sS$; and that,
moreover, such an $n$ can always be represented ``efficiently'' in such a
fashion.  Moreover, one can choose $\sS$ such that for any
$f\in\sS$, one has $\deg f\le s$.  (Note that the degree of a low-defect
polynomial is always equal to the number of variables it is in, since low-defect
polynomials are multilinear and always include a term containing all the
variables.)

Using this fact about low-defect polynomials, this author proved in \cite{paperwo} that the set $\mathscr{D}$ is
well-ordered with order type $\omega^\omega$, as well as the more specific
Theorem~\ref{wothm} mentioned above, and other results mentioned above such as
that the limit of the initial $\omega^k$ defects is equal to $k$.  However, this
is not enough to prove the more specific theorems shown in this paper, such as
Theorem~\ref{chgoverpt}.  But in \cite{theory} an improvement was shown, that we
can in fact take $\sS$ such that for all $f\in\mathscr{T}$, one has
$\dft(f)\le s$; here $\dft(f)$ is a number that bounds above $\dft(n)$ for any
$n$ represented by $f$ in the fashion described above; again, see \ref{polysec}
for more on this.

On top of that, it was shown in \cite{theory} that $\dft(f)\ge \deg f +
\dft(m)$, where $m$ is the leading coefficient of $f$.  Putting this together,
one gets the inequality \[ \deg f + \dft(m) \le s.\] It's this stronger
inequality that allows us to prove Theorem~\ref{mainthm}, where the
inequality $\deg f\le s$ would not be enough.  To see why this inequality is so
helpful, say we're given $s$ and we pick $\sS$ as described above.  Then
if $f\in\sS$, one of two things must be true: Either $\deg f<\lfloor
s\rfloor$, in which case $f$ does not make much of a contribution to
$\mathscr{D}\cap [0,s)$ compared to polynomials of higher degree; or $\deg
f=\lfloor s\rfloor$, in which case $\dft(m)$ is at most the fractional part of
$r$, a number which is less than $1$.  Since there are only finitely many
defects below any given number less than $1$, this puts substantial constraints
on $m$ and therefore on $f$, in ways that the weaker inequality $\deg f\le s$
does not.  This allows us to prove Theorem~\ref{chgoverpt}.

Note that the method we use to turn the results of \cite{theory} into
Theorem~\ref{mainthm} actually has much more power than we use in this
paper; but an exploration of the full power of this method would take us too far
away from the subject of $D(n)$, and so will be detailed in a future paper
\cite{stab}.

\subsection{A quick note on stabilization}
\label{secstab}

An important property satisfied by integer integer complexity is the phenomenon
of stabilization.  Because one has $\cpx{3^k}=3k$ for $k>1$, as well as that
$\cpx{2\cdot3^k}=2+3k$ and $\cpx{4\cdot3^k}=4+3k$, one might hope that in
general the equation $\cpx{3n}=\cpx{n}+3$ holds for all $n>1$.  Unfortunately
that is not the case; for instance, for $n=107$, one has $\cpx{107}=16$, but
$\cpx{321}=18$.  Another counterexample is $n=683$, for which one has
$\cpx{683}=22$, but $\cpx{2049}=23$.  There are even cases where
$\cpx{3n}<\cpx{n}$, such as $n=4721323$, which has $\cpx{3n}=\cpx{n}-1$.

And yet the initial hope is not entirely in vain.  In \cite{paper1}, it was
proved:

\begin{thm}
\label{basicstab}
For any natural number $n$, there exists $K\ge 0$ such that, for any $k\ge K$,
\[ \cpx{3^k n}=3(k-K)+\cpx{3^K n}. \]
\end{thm}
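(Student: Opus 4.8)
The plan is to exploit two elementary facts about integer complexity: a one-sided monotonicity that makes a suitably shifted sequence non-increasing, and the Selfridge lower bound that makes the same sequence bounded below. Since the sequence is integer-valued, it must eventually be constant, and that is precisely the assertion of the theorem.

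First I would record the unconditional inequality $\cpx{3m}\le\cpx{m}+3$, valid for every $m\ge 1$: taking an optimal representation of $m$ using $\cpx{m}$ ones and multiplying it by $3=1+1+1$ exhibits $3m$ written with $\cpx{m}+3$ ones. Applying this with $m=3^k n$ gives $\cpx{3^{k+1}n}\le\cpx{3^kn}+3$ for all $k\ge 0$. Hence, setting $a_k:=\cpx{3^kn}-3k$, the sequence $(a_k)_{k\ge 0}$ is a non-increasing sequence of integers.

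Next I would invoke the lower bound $\cpx{N}\ge 3\log_3 N$ (Selfridge's bound, recalled in the introduction and a consequence of Theorem~\ref{selfridge}) applied to $N=3^kn$: this yields $\cpx{3^kn}\ge 3k+3\log_3 n$, so $a_k\ge 3\log_3 n$ for every $k$. Thus $(a_k)$ is bounded below. A non-increasing sequence of integers bounded below is eventually constant, so there is some $K\ge 0$ with $a_k=a_K$ for all $k\ge K$. Rewriting $a_k=a_K$ as $\cpx{3^kn}-3k=\cpx{3^Kn}-3K$ gives $\cpx{3^kn}=3(k-K)+\cpx{3^Kn}$ for all $k\ge K$, as desired.

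As for the main obstacle: there is no deep one here. The entire argument rests on the observation that $\cpx{3^kn}-3k$ is monotone and integer-valued; the only steps needing any care are checking that this quantity really is an integer (immediate) and that the inequality $\cpx{3m}\le\cpx{m}+3$ holds without exception (also immediate). Notably, the opposite inequality $\cpx{3m}\ge\cpx{m}+3$ — which fails in general, and is exactly why $K$ can be forced to be positive — is never used.
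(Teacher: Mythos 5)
Your proof is correct. The paper does not prove Theorem~\ref{basicstab} itself but only cites \cite{paper1} for it, and your argument --- that $\cpx{3^k n}-3k$ is a non-increasing integer sequence (by $\cpx{3m}\le\cpx{m}+3$) bounded below by $3\log_3 n$ (Selfridge), hence eventually constant --- is exactly the standard argument given in that source.
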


Based on this, we define:

\begin{defn}
A number $m$ is called \emph{stable} if $\cpx{3^k m}=3k+\cpx{m}$ holds for every
$k \ge 0$.
Otherwise it is called \emph{unstable}.
\end{defn}

So, we can restate Theorem~\ref{basicstab} by saying, for any $n$, there is some
$K$ such that $3^K n$ is stable.

This allows us to define stable or stabilized analogues of many of the concepts
and discussed above, and prove stabilized analogues of the theorems discussed in
Section~\ref{thm1sec}.  See Sections~\ref{subsecdft} and \ref{secdefn} for the
relevant definitions, and Section~\ref{mainsec} for the versions of the main
theorems generalized to cover the stabilized case as well.

\subsection{Discussion: Comparison to addition chains}

In order to make sense of Theorem~\ref{mainthm}, it is helpful to introduce
an analogy to addition chains, a different notion of complexity which is similar
in spirit but different in detail.
An \emph{addition chain} for $n$ is defined to be a sequence
$(a_0,a_1,\ldots,a_r)$ such that $a_0=1$, $a_r=n$, and, for any $1\le k\le r$,
there exist $0\le i, j<k$ such that $a_k = a_i + a_j$; the number $r$ is called
the length of the addition chain.  The shortest length among addition chains for
$n$, called the \emph{addition chain length} of $n$, is denoted $\acl(n)$.
Addition chains were introduced in 1894 by H.~Dellac \cite{Dellac} and
reintroduced in 1937 by A.~Scholz \cite{aufgaben}; extensive surveys on the
topic can be found in Knuth \cite[Section 4.6.3]{TAOCP2} and Subbarao
\cite{subreview}.

The notion of addition chain length has obvious similarities to that of integer
complexity; each is a measure of the resources required to build up the number
$n$ starting from $1$.  Both allow the use of addition, but integer complexity
supplements this by allowing the use of multiplication, while addition chain
length supplements this by allowing the reuse of any number at no additional
cost once it has been constructed.  Furthermore, both measures are approximately
logarithmic; the function $\acl(n)$ satisfies
\[ \log_2 n \le \acl(n) \le 2\log_2 n. \]
 
A difference worth noting is that $\acl(n)$ is actually known to be asymptotic
to $\log_2 n$, as was proved by Brauer \cite{Brauer}, but the function $\cpx{n}$
is not known to be asymptotic to $3\log_3 n$; the value of the quantity
$\limsup_{n\to\infty} \frac{\cpx{n}}{\log n}$ remains unknown.

Nevertheless, there are important similarities between integer complexity and
addition chains.  As mentioned above, the set of all integer complexity defects
is a well-ordered subset of the real numbers, with order type $\omega^\omega$.
We might also define the notion of \emph{addition chain defect}, defined by
\[\dft^\acl(n):=\acl(n)-\log_2 n;\]
for as shown in \cite{adcwo}, the well-ordering theorem for integer complexity
has an analogue for addition chains:

\begin{thm}[Addition chain well-ordering theorem]
Let $\mathscr{D}^\acl$ denote the set $\{ \dft^\acl(n) : n \in \mathbb{N}
\}$.  Then considered as a subset of the real numbers, $\mathscr{D}^\acl$ is
well-ordered and has order type $\omega^\omega$.
\end{thm}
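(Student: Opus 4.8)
The plan is to follow the proof of the integer-complexity well-ordering theorem of \cite{paperwo}, with a structural description of short addition chains replacing low-defect polynomials. Writing $\{x\}$ for the fractional part of $x$, one has $\dft^\acl(n)=\acl(n)-\log_2 n=\bigl(\acl(n)-\floor{\log_2 n}\bigr)-\{\log_2 n\}=s(n)-\{\log_2 n\}$, so $\dft^\acl(n)<S$ forces $s(n)\le S$ whenever $S$ is a nonnegative integer; it therefore suffices to understand, for each such $S$, the defects arising from $n$ with $s(n)\le S$. A number with $s(n)\le S$ has an addition chain of length $\floor{\log_2 n}+S$, and at most $S$ of the steps of such a chain fail to increase the binary length (the ``small'' steps); once one fixes the positions of the small steps and which earlier chain elements each of them combines --- finitely much data --- the chain is determined by the lengths of its maximal runs of doublings, and only those lengths are unbounded. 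Packaging this, one should obtain the addition-chain analogue of the covering theorem of \cite{paperwo}: for each $S$ there is a finite list of ``templates'', each consisting of a polynomial $f$ with nonnegative integer coefficients in $d=d(f)\le S$ variables together with a cost $c(f)\in\N$, such that every $n$ with $s(n)\le S$ equals $f(2^{a_1},\dots,2^{a_d})$ for some template $f$ and tuple $(a_1,\dots,a_d)\in\N^{d}$, and such a representation always satisfies $\acl(n)\le c(f)$.

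Granting the covering lemma, the bound $\le\omega^\omega$ on the order type goes exactly as in \cite{paperwo}. Since defects are nonnegative, $\mathscr{D}^\acl\cap[0,S)$ is an initial segment of $\mathscr{D}^\acl$ for each $S$, and $\mathscr{D}^\acl$ is the increasing union of these, so it is enough to prove each $\mathscr{D}^\acl\cap[0,S)$ is well-ordered of order type $<\omega^{S+1}$. By the covering lemma it is contained in a finite union, over the templates $f$ for $S$, of sets of the form $\{\,c(f)-\{\log_2 f(2^{a_1},\dots,2^{a_d})\}:(a_1,\dots,a_d)\in\N^{d}\,\}$. The heart of the matter is then an asymptotic lemma: as the exponents $a_i$ grow, the quantity $\{\log_2 f(2^{a_1},\dots,2^{a_d})\}$ is controlled in a way that is nested according to which variables are dominant, so that under the real order the set of values of $c(f)-\{\log_2 f(2^{a})\}$ has order type at most $\omega^{d(f)}\le\omega^{S}$. (This parallels the computation in \cite{paperwo} of the behaviour of $\dft$ along a low-defect polynomial, with $3\log_3$ replaced throughout by $\log_2$.) A finite union of well-ordered subsets of $\R$ of order type $\le\omega^{S}$ has order type $<\omega^{S+1}$, which is what we wanted; letting $S\to\infty$ gives that $\mathscr{D}^\acl$ is well-ordered of order type $\le\omega^\omega$.

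For the matching lower bound it suffices, for every $k$, to exhibit a subset of $\mathscr{D}^\acl$ of order type $\omega^{k}$. Take the iterated family defined by $n_1=2^{a_1}+1$ and $n_{j+1}=2^{a_{j+1}}n_j+1$, so that $n_k$ is a sum of $k+1$ distinct powers of $2$. The binary method gives $\acl(n_k)\le a_1+\dots+a_k+k$, and a lower bound of the same size --- which follows from the covering lemma applied with $S=k-1$, or directly from the optimality of the binary method for binary expansions whose digits are sufficiently separated --- shows $s(n_k)=k$ and hence $\dft^\acl(n_k)=k-\{\log_2 n_k\}$. If the parameters grow rapidly enough (each $a_{j+1}$ large compared with $n_j$), then every $\{\log_2 n_j\}$ stays small and positive, $\{\log_2 n_k\}$ is decreasing in $a_k$ with limit $\{\log_2 n_{k-1}\}$, which is decreasing in $a_{k-1}$ with limit $\{\log_2 n_{k-2}\}$, and so on down to $\{\log_2 n_1\}$, which is decreasing in $a_1$ with limit $0$; so the $\dft^\acl(n_k)$ form a strictly increasing $k$-fold nested family with supremum $k$, a suborder of $\mathscr{D}^\acl$ of order type $\omega^{k}$. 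Combined with the previous paragraph, $\mathscr{D}^\acl$ has order type exactly $\omega^\omega$.

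The main obstacle is the covering lemma of the first paragraph. Because an addition chain is free to reuse any previously-computed value, it is a directed acyclic graph rather than a tree, so enumerating the finitely many combinatorial shapes of a chain with at most $S$ small steps, and --- the real subtlety --- repackaging each shape as a polynomial template with an honest cost function $c(f)$ so that the run-lengths of doublings become its variables, takes genuine work; this is the point at which the analogy with \cite{paperwo} is least mechanical. Secondary difficulties are the asymptotic lemma on $\{\log_2(\cdot)\}$ of integer combinations of powers of $2$ used for the order-type bound within a single template, and, in the lower-bound construction, the exact evaluation of $\acl(n_k)$, for which the easy binary-method upper bound together with a separated-digits lower bound (itself a consequence of the covering lemma) suffices.
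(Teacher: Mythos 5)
This theorem is not proved in the paper at all: it is quoted verbatim from \cite{adcwo}, so there is no in-paper argument to compare against. Judged on its own terms, your proposal correctly identifies the right strategy (transplant the low-defect-polynomial machinery of \cite{paperwo}, with runs of doublings playing the role of the exponents of $3$), but it has a genuine gap exactly where you flag one: the ``covering lemma'' of your first paragraph is asserted, not proved, and it is essentially the entire content of the theorem. The difficulty is not merely bookkeeping. Because a non-doubling step $a_k=a_i+a_j$ may reuse an element lying arbitrarily deep inside a long run of doublings (already for $n=2^a+2^b$ the reused element $2^b$ sits at an unbounded position), the ``finitely much data'' you propose to fix does not determine the chain; each such reuse splits a doubling run and contributes an extra variable, so your bound $d\le S$ on the number of variables fails as stated (for $S=1$ one already needs $n=2^{a_2}(2^{a_1}+1)$, two variables, the analogue of the augmented polynomial $\xpdd{f}=f\otimes x_{d+1}$ rather than $f$ itself). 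One must therefore prove a structure theorem controlling both the number of templates and the number of \emph{effective} variables after factoring out the trailing power of $2$, and that is where all the work lies.

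The lower bound also leans on the covering lemma more heavily than your parenthetical suggests. To get $s(n_k)=k$ for $n_k=2^{a_k}(\cdots(2^{a_1}+1)\cdots)+1$ you cannot use digit-counting bounds (they only give $s(n)\ge\lceil\log_2\nu(n)\rceil$, far weaker), and ``apply the covering lemma with $S=k-1$'' requires showing that $n_k$, for suitably separated parameters, is \emph{not} $3$-representable --- rather, $2$-representable --- by any of the finitely many templates with fewer variables; that non-representability statement is a separate Diophantine argument, not a formal consequence. In short: the skeleton is right and matches the actual proof in \cite{adcwo}, but the two load-bearing lemmas (the structure/covering theorem for chains with at most $S$ small steps, and the exactness of $s$ on the nested family) are precisely the parts left unestablished.
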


More commonly, however, it is not $\dft^\acl(n)$ that has been studied, but
rather $s(n)$, the number of \emph{small steps} of $n$, which is defined to be
$\acl(n)-\lfloor\log_2\rfloor$, or equivalently $\lceil\dft^\acl(n)\rceil$.
The quantity $D(n)$ that we introduce seems to play a role in integer complexity
similar to $s(n)$ in the study of addition chains.  Now, unlike with $s(n)$ and
$\dft^\acl(n)$, $D(n)$ is not simply $\lceil\dft(n)\rceil$; for instance,
$D(56)=1$ even though $\dft(56)>1$.  (Although Theorem~\ref{dtoD} will show how
$D(n)$ is in a certain sense almost $\lceil\dft(n)\rceil$.)  But, there are
further analogies.

Analogous to Theorem~\ref{basicstab}, we have (from \cite{adcwo}) the following:

\begin{thm}
\label{adcstab}
For any natural number $n$, there exists $K\ge 0$ such that, for any $k\ge K$,
\[ \acl(2^k n)=(k-K)+\acl(2^K n). \]
\end{thm}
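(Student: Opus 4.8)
The plan is to mirror the proof of Theorem~\ref{basicstab} for integer complexity, using the elementary principle that a non-increasing sequence of integers that is bounded below is eventually constant. Fix a natural number $n$ and set $b_k := \acl(2^k n) - k$ for $k \ge 0$. I claim $(b_k)_{k \ge 0}$ is a non-increasing sequence of integers bounded below; granting this, there is a least $K \ge 0$ with $b_k = b_K$ for all $k \ge K$, and unwinding the definition gives $\acl(2^k n) = k + b_K = (k-K) + \acl(2^K n)$ for all $k \ge K$, which is exactly the statement.

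First I would check that $(b_k)$ is non-increasing, i.e. that $\acl(2^{k+1} n) \le \acl(2^k n) + 1$. This is immediate from the definition of addition chains: given any addition chain $(a_0, \ldots, a_r)$ for $2^k n$, the sequence $(a_0, \ldots, a_r, 2a_r)$ is an addition chain for $2^{k+1} n$ of length $r+1$, since $2a_r = a_r + a_r$ is a legal step. Taking $r = \acl(2^k n)$ yields $\acl(2^{k+1} n) \le \acl(2^k n) + 1$, hence $b_{k+1} \le b_k$.

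Next I would supply the lower bound. From the logarithmic lower bound $\acl(m) \ge \log_2 m$ recalled above, applied with $m = 2^k n$, we get $\acl(2^k n) \ge \log_2(2^k n) = k + \log_2 n$, so $b_k \ge \log_2 n$ for every $k$; thus $(b_k)$ is bounded below by a constant independent of $k$. Since $\acl$ is integer-valued, each $b_k$ is an integer, so a non-increasing integer sequence bounded below must stabilize, producing the desired $K$.

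There is essentially no obstacle here; the only point requiring care is that the sequence in question is genuinely integer-valued, so that ``non-increasing and bounded below'' forces eventual constancy (which would fail for an arbitrary real sequence) — and this is automatic because $\acl$ takes values in $\N$. This argument also identifies the $K$ it produces as the least index at which $\acl(2^k n) - k$ attains its minimum, in exact parallel with the integer-complexity situation of Theorem~\ref{basicstab}.
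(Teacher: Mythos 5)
Your proof is correct. The paper does not actually prove this theorem — it is quoted from \cite{adcwo} — but your argument, that $\acl(2^{k+1}n)\le\acl(2^k n)+1$ (append a doubling step) and $\acl(2^k n)-k\ge\log_2 n$ make $\acl(2^k n)-k$ a non-increasing integer sequence bounded below and hence eventually constant, is exactly the standard argument used there and for the integer-complexity analogue, Theorem~\ref{basicstab}.
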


So we define a number $n$ to be \emph{$\acl$-stable} if for any $k$, one has
$\acl(2^k n)=k+\acl(n)$; then Theorem~\ref{adcstab} says that for any $n$, there
is some $K$ such that $2^K n$ is $\acl$-stable.  This allows us to formulate a
stabilized version of the previous analogy -- and of the ones to follow.

In \cite{adcwo}, this author conjectured:
\begin{conj}
For each whole number $k$, $\mathscr{D}^\acl\cap[0,k]$ has order type
$\omega^k$.
\end{conj}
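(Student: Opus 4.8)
The plan is to carry the argument that yields the integer--complexity analogue --- that the limit of the initial $\omega^k$ elements of $\mathscr{D}$ is $k$, equivalently that $\mathscr{D}\cap[0,k]$ has order type $\omega^k$ --- over to addition chains, using whatever structure theory underlies the addition--chain well-ordering theorem of \cite{adcwo} in place of the low-defect polynomial machinery of \cite{paperwo,theory}. The case $k=0$ is immediate, since $\mathscr{D}^\acl\cap[0,0]=\{0\}$, realized by the powers of $2$, has order type $1=\omega^0$; so fix $k\ge1$ and first observe that $k\notin\mathscr{D}^\acl$, because $\dft^\acl(n)=k$ would force $\log_2 n=\acl(n)-k\in\Z$, hence $n=2^m$, whence $m=\acl(2^m)-k=m-k$, impossible for $k\ge1$. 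Thus $\mathscr{D}^\acl\cap[0,k]=\mathscr{D}^\acl\cap[0,k)$, and it suffices to bound its order type above and below by $\omega^k$.

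For the upper bound I would use the structural input that any proof of order type $\omega^\omega$ must supply: for each real $s>0$ a \emph{finite} family $\mathcal{T}_s$ of ``templates'' (the addition--chain analogue of low-defect polynomials), each carrying a nonnegative integer \emph{degree}, such that every $n$ with $\dft^\acl(n)\le s$ is represented by some $T\in\mathcal{T}_s$; the defects represented by a single template of degree $d$ have order type at most $\omega^d$; and $\deg T\le\dft^\acl(T)\le s$, where $\dft^\acl(T)$ is the supremum of the defects $T$ represents. The last inequality is the analogue of the bound $\dft(f)\ge\deg f$ isolated in \cite{theory}, and if it is not already in hand it should follow from the same bookkeeping: a template of degree $d$ carries $d$ independent doubling-parameters, and driving them all to infinity forces at least $d$ units of small-step excess. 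Granting this, for $s<k$ every relevant template has $\deg T\le\lfloor s\rfloor\le k-1$, so $\mathscr{D}^\acl\cap[0,s]$ is a finite union of sets of order type at most $\omega^{k-1}$ and hence has order type strictly below $\omega^k$; since $\mathscr{D}^\acl\cap[0,k)$ is the increasing union of these initial segments over a sequence $s\uparrow k$, its order type is at most $\omega^k$.

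For the matching lower bound I would exhibit, for each $k$, an explicit $k$-fold nested family of numbers whose defects realize order type $\omega^k$ within $[0,k)$. One natural candidate iterates the operation $n\mapsto 2^a n+1$: beginning from $2^{a_1}+1$, whose defect $1-\log_2(1+2^{-a_1})$ increases to $1$ as $a_1\to\infty$, and applying the operation $k-1$ further times yields numbers $2^{b_1}+2^{b_2}+\dots+2^{b_k}+1$ with $b_1>\dots>b_k$, whose binary weight forces the number of small steps up by one at each stage, so that at stage $j$ the defects climb through order type $\omega^j$ toward $j$; products $\prod_{i=1}^k(2^{a_i}+1)$ are an alternative. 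The real content is checking that such families actually attain the addition--chain lengths one writes down naively --- so that their defects are as computed and are pairwise distinct in the pattern required --- which one would obtain from the standard lower bounds on the number of small steps in terms of binary weight together with $\acl$-stability (Theorem~\ref{adcstab}), applied once the exponents are large and widely separated.

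I expect this last verification to be the main obstacle. In the integer--complexity setting the algebra of $\{+,\cdot\}$-expressions makes both the construction of efficient representations and the proof of their optimality essentially routine, whereas addition chains have no comparably rigid structure and $\acl(n)$ is genuinely hard to pin down; proving that the nested families are optimal, and uniformly enough to control the full order type rather than a handful of initial segments, is where the work concentrates. A secondary obstacle is confirming that the degree notion coming out of \cite{adcwo} really does satisfy $\deg T\le\dft^\acl(T)$ --- a refinement in the spirit of \cite{theory} that the sketch above treats as a hypothesis rather than a quoted fact.
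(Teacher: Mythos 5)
You should first be aware that this statement is not proved in the paper at all: it is stated explicitly as a \emph{conjecture} (carried over from \cite{adcwo}), and the surrounding text says that only the cases $k\le 2$ are known. So there is no proof of the paper's to compare yours against, and what you have written is, as you yourself partly acknowledge, a programme resting on unproved hypotheses rather than a proof. The two hypotheses you flag are exactly where the conjecture is open, so it is worth being precise about why they fail as stated.

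First, the lower-bound construction is wrong as described. You assert that for $n=2^{b_1}+\cdots+2^{b_k}+1$ the ``binary weight forces the number of small steps up by one at each stage.'' The known lower bound on small steps in terms of binary weight $\nu(n)$ is only $s(n)\ge\lceil\log_2\nu(n)\rceil$ (Sch\"onhage's theorem gives $\acl(n)\ge\log_2 n+\log_2\nu(n)-2.13$), not $s(n)\ge\nu(n)-1$. And this is not merely a gap in the known bounds: the number $(2^a+1)(2^b+1)=2^{a+b}+2^a+2^b+1$ has binary weight $4$ with arbitrarily widely separated exponents, yet $s(n)\le 2$, so its defect cannot approach $3$. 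Your alternative family $\prod_{i=1}^k(2^{a_i}+1)$ does have $\dft^\acl(n)\le k$, but proving the matching lower bound --- that $\acl\bigl(\prod_i(2^{a_i}+1)\bigr)=\sum_i a_i+k$ for well-separated $a_i$, uniformly enough to pin down the order type --- is precisely the hard open lower-bound problem for addition chains; neither stability (Theorem~\ref{adcstab}) nor the weight bound gets you there. Second, for the upper bound you posit a finite template family with $\deg T\le\dft^\acl(T)$; the decomposition actually used in \cite{adcwo} to prove order type $\omega^\omega$ does not come with this refinement (the integer-complexity analogue required the separate paper \cite{theory}), and your heuristic that ``driving the $d$ doubling-parameters to infinity forces $d$ units of small-step excess'' is again a lower bound on $\acl$ of exactly the kind that is not available. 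In short: the skeleton of your argument is the right shape (it mirrors how Theorem~\ref{stabchgoverpt} is proved for integer complexity), but both load-bearing steps are open problems, which is why the statement remains a conjecture.
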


In other words, this conjecture states that the limit of the initial $\omega^k$
addition chain defects is equal to $k$.
If true, this would mean that $s(n)$ plays the same role for
$\mathscr{D}^\acl$ as $D(n)$ does for the $\mathscr{D}^a$, that $s(n)$ is the
smallest $k$
such that the order type of $\mathscr{D}^\acl\cap[0,\dft^\acl(n))$ is less than
$\omega^k$.

One similarly based on conjectures in \cite{adcwo} gets analogies between
$D_\st(n)$ and $s_\st(n)$ and how they determine position in $\mathscr{D}^a_\st$
and $\mathscr{D}^\acl_\st$, respectively; see Section~\ref{secdefn} for
definitions of these.

It's worth noting here one important difference between these two cases: in the
integer complexity case, we need to split things into congruence classes modulo
$3$ based on $\cpx{n}$.  This has no analogue in the addition chain case.  The
difference comes from a difference in certain fundamental inequalities that
these quantities obey.  Integer complexity obeys $\cpx{3n}\le\cpx{n}+3$, with
equality if and only if $\dft(3n)=\dft(n)$.  The addition chain analogue of this
is that one has $\acl(2n)\le\acl(n)+1$, with equality if and only if
$\dft^\acl(2n)=\dft^\acl(n)$.  The result \cite{adcwo,paper1} is that if we have
two numbers $m$ and $n$ with $\dft^\acl(n)=\dft^\acl(m)$, then one must have
$m=2^k n$ for some $k\in\mathbb{Z}$; and if we have two numbers $m$ and $n$ with
$\dft(n)=\dft(m)$, then one must have $m=3^k n$ for some $k\in\mathbb{Z}$.
However in the latter case we must also have $\cpx{m}\equiv\cpx{n}\pmod{3}$;
this is why the sets $\mathscr{D}^a$ are disjoint.  In
the addition chain case there is no such congruence requirement; $\acl(n)$ and
$\acl(m)$ need only be congruent modulo $1$, which is no requirement at all, so
splitting up $\mathscr{D}^\acl$ in a similar manner does not make sense.  The
set $\mathscr{D}^\acl$ already covers the one and only congruence class that
exists in the addition chain case.

But it is not only our primary theorem but also our secondary theorem here that
has an analogues for addition chains, and in this case the analogy does not rely
on any conjectures.
While the hypothesis that
the order type of $\mathscr{D}^\acl\cap[0,k]$ is equal to $\omega^k$ remains a
conjecture, that this holds for $k\le 2$ -- and in particular that it holds for
$k=1$ -- was proven in \cite{adcwo}.  This means that just as we can look at the
first $\omega$ elements of each $\mathscr{D}^a$ in order to determine the
$r$'th-highest number of complexity $k$, we can look at the first $\omega$
elements of $\mathscr{D}^\acl$ to determine the $r$'th-highest number of
addition chain length $k$ (or at most $k$, which in these cases is the same
thing).  (Again, here $k$ must be sufficiently large relative
to $r$.  Also, again here we are using the convention that $r$ starts at $0$
rather than $1$.)

Specifically, it's an easy corollary of the classification of numbers with
$s(n)\le 1$ (due to Gioia et al.~\cite{sub1962}) that:

\begin{thm}
For $k\ge r+1$ (or for $k\ge0$ when $r=0$), the $r$'th-largest number of
addition chain length $k$ is $(\frac{1}{2}+\frac{1}{2^{r+1}})2^k$.
\end{thm}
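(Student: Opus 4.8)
The plan is to deduce this from the classification of numbers $n$ with $s(n)\le 1$, in exactly the manner by which the statement about $E_r(k)$ is deduced from the classification of $n$ with $D(n)\le 1$. Recall that the Gioia et al.\ classification \cite{sub1962} (which also follows quickly from the standard inequality $\nu(n)\le 2^{s(n)}$, where $\nu(n)$ denotes the number of $1$'s in the binary expansion of $n$) says that $s(n)=0$ exactly when $n$ is a power of $2$, and $s(n)=1$ exactly when $\nu(n)=2$, i.e.\ when $n=2^a+2^b$ for some $a>b\ge 0$; equivalently, $s(n)\le 1$ if and only if $\nu(n)\le 2$. The upper bound $\acl(2^a+2^b)\le a+1$ implicit in this comes from the chain that doubles $a-b$ times, adds $1$, and then doubles $b$ more times.

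For $r=0$ the statement is immediate: the largest natural number of addition chain length at most $k$ is $2^k$ for every $k\ge 0$, and $2^k=(\tfrac12+\tfrac12)2^k$. So fix $r\ge 1$ and $k\ge r+1$. First I would write down the candidate list of the $k$ largest natural numbers of addition chain length at most $k$, in decreasing order:
\[ 2^k,\quad 2^{k-1}+2^{k-2},\quad 2^{k-1}+2^{k-3},\quad\ldots,\quad 2^{k-1}+2^0. \]
Each entry has $\acl$ at most $k$: the first is a power of $2$ with $\lfloor\log_2 n\rfloor=k$, and each later entry has $\nu=2$ and $\lfloor\log_2 n\rfloor=k-1$, hence $s\le 1$. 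Moreover, since none of the later entries is a power of $2$, each in fact has $\acl$ exactly $k$ (an addition chain of length $\lfloor\log_2 n\rfloor$ forces $n$ to be a power of $2$); so once we know this list consists of the $k$ largest numbers with $\acl\le k$, the ``at most $k$'' and ``exactly $k$'' forms of the theorem coincide over the range $1\le r\le k-1$. The entry of index $r$ (counting from $0$) is $2^{k-1}+2^{k-1-r}=(\tfrac12+\tfrac1{2^{r+1}})2^k$, the claimed value; and the hypothesis $k\ge r+1$ is exactly what makes $2^{k-1-r}$ a genuine entry of the list.

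The one substantive step is to verify that the displayed list really is exhaustive at the top --- that no number with $\acl\le k$ has been omitted above it or between its entries. Nothing lies above $2^k$, since any such $n$ has $\lfloor\log_2 n\rfloor\ge k+1$ and hence $\acl(n)\ge k+1$. And nothing lies strictly between two consecutive entries: a number $n$ with $2^{k-1}+2^{k-2}<n<2^k$, or with $2^{k-1}+2^j<n<2^{k-1}+2^{j+1}$ for some $0\le j\le k-3$, has $\lfloor\log_2 n\rfloor=k-1$ and $n-2^{k-1}$ lying strictly between two consecutive powers of $2$, so $\nu(n-2^{k-1})\ge 2$ and $\nu(n)\ge 3$; by the classification, $s(n)\ge 2$ and therefore $\acl(n)\ge k+1$. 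Hence the $r$'th largest $n$ with $\acl(n)\le k$ is the entry of index $r$ in the list, and --- since every entry of the list has $\acl$ exactly $k$ --- this is also the $r$'th largest $n$ with $\acl(n)=k$, proving the theorem. The main obstacle is thus entirely contained in this exhaustiveness argument, which itself rests only on the inequality $\nu(n)\le 2^{s(n)}$ that underlies the classification of $s(n)\le 1$; everything else is bookkeeping.
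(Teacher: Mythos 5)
Your proof is correct and takes exactly the route the paper intends: the paper offers no detailed argument, merely asserting the theorem as an easy corollary of the Gioia et al.\ classification of $s(n)\le 1$, and you have supplied precisely those details (candidate list, exhaustiveness via $\nu(n)\le 2^{s(n)}$, and the equivalence of the ``exactly $k$'' and ``at most $k$'' readings). One minor slip: for $2^k<n<2^{k+1}$ one has $\lfloor\log_2 n\rfloor=k$, not $\ge k+1$, so the claim that nothing with $\acl(n)\le k$ lies above $2^k$ should instead be justified by noting that such $n$ is not a power of $2$, hence $s(n)\ge 1$ and $\acl(n)\ge k+1$ --- a fact you already state and use elsewhere in the argument.
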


Obviously here the fraction $\frac{1}{2}+\frac{1}{2^{r+1}}$ plays the role of
the $h_r$ and $r+1$ plays the role of $K_r$; unlike with integer complexity,
there are no irregularities here,
just a single straightforward infinite family.  (And note how the analogue of
the $K_r$ increases in what is mostly steps of $1$, rather than mostly steps of
$3$ like the actual $K_r$, because once again with addition chains there's only
one congruence class.)
For more on the analogy between integer complexity and addition chains,
particularly with regard to their sets of defects, one may see \cite{theory}.

\section{Integer complexity, well-ordering, and low-defect polynomials}
\label{polysec}

In this section we summarize the results of \cite{paperwo,theory,paper1}
that we will need later regarding the defect $\dft(n)$; the stable complexity
$\cpx{n}_\st$ and stable defect $\dft_\st(n)$ described below; and low-defect
polynomials.

\subsection{The defect and stability}
\label{subsecdft}

First, some basic facts about the defect:

\begin{thm}
\label{oldprops}
We have:
\begin{enumerate}
\item For all $n$, $\dft(n)\ge 0$.
\item For $k\ge 0$, $\dft(3^k n)\le \dft(n)$, with equality if and only if
$\cpx{3^k n}=3k+\cpx{n}$.  The difference $\dft(n)-\dft(3^k n)$ is a nonnegative
integer.
\item A number $n$ is stable if and only if for any $k\ge 0$, $\dft(3^k
n)=\dft(n)$.
\item If the difference $\dft(n)-\dft(m)$ is rational, then $n=m3^k$ for some
integer $k$ (and so $\dft(n)-\dft(m)\in\mathbb{Z}$).
\item Given any $n$, there exists $k$ such that $3^k n$ is stable.
\item For a given defect $\alpha$, the set $\{m: \dft(m)=\alpha \}$ has either
the form $\{n3^k : 0\le k\le L\}$ for some $n$ and $L$, or the form $\{n3^k :
0\le k\}$ for some $n$.  The latter occurs if and only if $\alpha$ is the
smallest defect among $\dft(3^k n)$ for $k\in \mathbb{Z}$.
\item If $\dft(n)=\dft(m)$, then $\cpx{n}=\cpx{m} \pmod{3}$.
\item $\dft(1)=1$, and for $k\ge 1$, $\dft(3^k)=0$.  No other integers occur as
$\dft(n)$ for any $n$.
\item If $\dft(n)=\dft(m)$ and $n$ is stable, then so is $m$.
\end{enumerate}
\end{thm}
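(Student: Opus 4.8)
The plan is to prove Theorem~\ref{oldprops} one item at a time, but organized so that the cheapest facts come first and later items may rely on earlier ones. Most of these are already in the literature (\cite{paper1,paperwo}); here the task is to assemble them coherently. I would begin with (1), which is immediate from Selfridge's bound $\cpx{n}\ge 3\log_3 n$ applied to $n>1$, with $\dft(1)=1\ge 0$ handled separately. Then (8): $\dft(1)=\cpx{1}-3\log_3 1 = 1$, and $\dft(3^k)=3k-3k=0$ for $k\ge 1$; to see no other integer occurs, note $\dft(n)\in\Z$ forces $\cpx{n}=3\log_3 n + m$ with $m\in\Z$, hence $3\log_3 n\in\Z$, hence $n$ is a power of $3$ (as $\log_3 n$ rational forces $n=3^j$), and then $\dft(n)=0$ unless $n=1$.

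**The stability core (items 2--5, 9).** The heart of the argument is item (2): the subadditivity $\cpx{3^k n}\le 3k+\cpx{n}$ (write a representation of $n$ and multiply by $k$ threes) translates directly to $\dft(3^k n)\le\dft(n)$ since the $3\log_3$ terms differ by exactly $3k$; equality in one is equality in the other, and the difference $\dft(n)-\dft(3^kn) = (\cpx{n}+3k)-\cpx{3^k n}$ is a difference of integers, hence an integer, and nonnegative by the inequality. Item (3) is then just the definition of stable unwound through (2): $n$ stable means $\cpx{3^kn}=3k+\cpx n$ for all $k$, which by (2) is exactly $\dft(3^kn)=\dft(n)$ for all $k$. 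Item (5) is Theorem~\ref{basicstab}: for large $k$ the sequence $\cpx{3^kn}-3k$ stabilizes (it is nonincreasing in $k$ by (2) and bounded below by $3\log_3 n$), so some $3^K n$ is stable. Item (9): if $\dft(n)=\dft(m)$ and $n$ is stable, then by (4) (proved next) $m=3^j n$ for some $j\in\Z$; since $n$ is stable and $\dft(m)=\dft(n)=\dft(3^{|j|}n)$ regardless of sign, chasing (2) through both $n\to m$ and $m\to n$ directions forces all defects $\dft(3^k m)$ to equal $\dft(n)$, so $m$ is stable.

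**The rigidity items (4, 6, 7) — the main obstacle.** Item (4) is the substantive input and I expect it to be the crux: if $\dft(n)-\dft(m)\in\Q$, then writing it out, $\cpx n - \cpx m - 3\log_3(n/m)\in\Q$ forces $\log_3(n/m)\in\Q$, hence $n/m=3^k$ for some $k\in\Z$ (a rational power of $3$ that is itself rational must be an integer power), and then automatically $\dft(n)-\dft(m)=\pm(\text{integer})$ by (2). The genuinely hard direction — showing that $\dft(n)=\dft(m)$ already forces rationality of the difference and hence $n=3^km$ — is not something to reprove from scratch; it is Theorem~2.1 of \cite{paper1} (equivalently a consequence of the well-ordering machinery), so I would cite it. Given (4), item (7) is immediate: $\dft(n)=\dft(m)$ gives $m=3^kn$, and $\cpx{3^kn}\equiv\cpx n\pmod 3$ whenever $\dft(3^kn)=\dft(n)$ because then $\cpx{3^kn}=3k+\cpx n$; when $k<0$ apply the same to the pair reversed. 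Finally (6): fix $\alpha$ and let $S=\{m:\dft(m)=\alpha\}$; by (4) all elements are of the form $n3^k$ for a single $n\in S$ and varying $k\in\Z$, and by (2) the set of admissible $k$ is an up-set or down-set in $\Z$ intersected appropriately — more precisely, $\dft(3^kn)$ is nonincreasing as $k$ increases, so $\{k:\dft(3^kn)=\alpha\}$ is an interval of integers; it is unbounded above exactly when $\alpha$ is the minimal value in the $3$-orbit (the eventual stable value), giving the two stated forms. The one subtlety to check carefully is that the interval cannot be unbounded below — but $\dft(3^kn)\to\infty$ as $k\to-\infty$ is false; rather one uses that for $k$ very negative $3^kn$ need not be an integer, so the orbit in $\N$ is automatically bounded below, pinning down the left endpoint.
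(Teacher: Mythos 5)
Your proposal is correct, but it takes a genuinely different route from the paper: the paper proves this theorem entirely by citation (parts (1)--(8) except (3) are quoted as Theorem~2.1 of \cite{paperwo}, part (3) as Proposition~12 of \cite{paper1}, and part (9) as Proposition~3.1 of \cite{paperwo}), whereas you reconstruct self-contained elementary proofs. Your arguments all work: (1) and (8) from Selfridge's bound plus the fact that $\dft(n)\in\Z$ forces $3\log_3 n\in\Z$ and hence $n=3^j$; (2) from submultiplicativity $\cpx{3^k n}\le 3k+\cpx{n}$; (5) from the observation that $k\mapsto\cpx{3^k n}-3k$ is a nonincreasing integer sequence bounded below by $3\log_3 n$, hence eventually constant; (4) from unique factorization, since a rational number that is a rational power of $3$ must be an integer power of $3$; and (6), (7), (9) by chasing (2) and (4) through the $3$-orbit. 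Two small remarks. First, you describe part (4) as having a ``genuinely hard direction'' needing citation --- that $\dft(n)=\dft(m)$ forces rationality --- but this is vacuous, since equality makes the difference $0\in\Q$; your direct argument already proves all of (4) and the citation is unnecessary. Second, in (6) the left endpoint is pinned down simply by taking $n$ to be the least element of $\{m:\dft(m)=\alpha\}$ (a nonempty set of positive integers has a least element); your justification via $3^k n$ failing to be an integer for $k$ very negative is not quite the right reason (the orbit can also leave the set because the defect strictly increases before divisibility fails), though the conclusion is unaffected. What your approach buys is a proof readable without the cited papers; what the paper's buys is brevity and attribution to where these facts were first established.
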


\begin{proof}
Parts (1) through (8), excepting part (3), are just Theorem~2.1 from
\cite{paperwo}.  Part (3) is Proposition~12 from \cite{paper1}, and part (9) is
Proposition~3.1 from \cite{paperwo}.
\end{proof}

We will want to consider the set of all defects:

\begin{defn}
We define the \emph{defect set} $\mathscr{D}$ to be $\{\dft(n):n\in\N\}$, the
set of all defects.
\end{defn}

We also defined $\mathscr{D}^a$, for $a$ a congruence class modulo $3$, in
Definition~\ref{dadef} earlier.

The paper \cite{paperwo} also defined the notion of a \emph{stable defect}:

\begin{defn}
We define a \emph{stable defect} to be the defect of a stable number, and define
$\mathscr{D}_\st$ to be the set of all stable defects.  Also, for $a$ a
congruence class modulo $3$, we define $\mathscr{D}^a_\st=\mathscr{D}^a \cap
\mathscr{D}_\st$.
\end{defn}

Because of part (9) of Theorem~\ref{oldprops}, this definition makes sense; a
stable defect $\alpha$ is not just one that is the defect of some stable number,
but one for which any $n$ with $\dft(n)=\alpha$ is stable.  Stable defects can
also be characterized by the following proposition from \cite{paperwo}:

\begin{prop}
\label{modz1}
A defect $\alpha$ is stable if and only if it is the smallest
$\beta\in\mathscr{D}$ such that $\beta\equiv\alpha\pmod{1}$.
\end{prop}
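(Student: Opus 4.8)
The plan is to prove both directions directly from parts (2), (4), (5), and (8) of Theorem~\ref{oldprops}, using the fact that any two defects that are congruent modulo $1$ differ by an element of $\mathbb{Z}$ and are therefore linked by multiplication by a power of $3$.

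First I would prove that if $\alpha$ is stable then it is the smallest $\beta\in\mathscr{D}$ with $\beta\equiv\alpha\pmod 1$. So suppose $\alpha=\dft(n)$ with $n$ stable, and suppose $\beta=\dft(m)$ satisfies $\beta\equiv\alpha\pmod1$. Then $\beta-\alpha\in\mathbb{Z}$, so in particular $\beta-\alpha$ is rational; by part (4) of Theorem~\ref{oldprops} this forces $m=n3^k$ for some $k\in\mathbb{Z}$. If $k\ge0$, then part (2) gives $\dft(m)=\dft(n3^k)\le\dft(n)=\alpha$, but stability of $n$ (via part (3)) gives $\dft(n3^k)=\dft(n)$, so $\beta=\alpha$. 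If $k<0$, write $n=m3^{-k}$ with $-k>0$; then by part (2), $\alpha=\dft(n)=\dft(m3^{-k})\le\dft(m)=\beta$. Either way $\beta\ge\alpha$, so $\alpha$ is indeed the minimum of its congruence class in $\mathscr{D}$.

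Conversely, suppose $\alpha=\dft(n)$ is the smallest element of $\mathscr{D}$ congruent to $\alpha$ modulo $1$; I must show $n$ is stable. By part (5) of Theorem~\ref{oldprops}, there is some $j\ge0$ such that $m:=3^jn$ is stable, and by part (2) the difference $\dft(n)-\dft(m)$ is a nonnegative integer, so $\dft(m)\equiv\dft(n)=\alpha\pmod1$ and $\dft(m)\le\alpha$. By minimality of $\alpha$ in its congruence class, $\dft(m)=\alpha$, i.e. $\dft(3^jn)=\dft(n)$. Now I would invoke part (2) once more: since $\dft(3^jn)=\dft(n)$ with $j\ge0$, we get $\cpx{3^jn}=3j+\cpx{n}$; but more is needed, namely that $\dft(3^in)=\dft(n)$ for \emph{every} $i\ge0$, which then gives stability by part (3). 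This follows because the sequence $\dft(3^in)$ for $i=0,1,2,\ldots$ is nonincreasing (part (2) applied repeatedly), so being equal at $i=0$ and $i=j$ forces it to be constant on $0\le i\le j$; and for $i\ge j$ it is constant because $3^jn=m$ is stable, so $\dft(3^in)=\dft(3^{i-j}m)=\dft(m)=\alpha$ for all $i\ge j$. Hence $\dft(3^in)=\alpha=\dft(n)$ for all $i\ge0$, and $n$ is stable.

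The only subtlety — the ``main obstacle,'' though it is mild — is making sure the two invocations of monotonicity are stitched together correctly: that equality of the defect at the endpoints $i=0$ and $i=j$ of a nonincreasing integer-valued (up to a fixed additive real) sequence forces equality throughout, and that one genuinely needs stability of $m=3^jn$ (not merely $\dft(3^jn)=\dft(n)$) to handle the tail $i>j$. Alternatively, one could shortcut the converse direction entirely by citing part (3) together with the observation that $\{i\ge0:\dft(3^in)=\dft(n)\}$ is, by part (2), either all of $\mathbb{Z}_{\ge0}$ or a finite initial segment, so it suffices to know it is unbounded — which the existence of a stable $3^jn$ with $\dft(3^jn)=\alpha$ provides. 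I would present whichever of these is cleanest; both are short.
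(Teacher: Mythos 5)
Your proof is correct. Note that the paper itself gives no argument here --- it simply imports the proposition from the well-ordering paper --- so there is no in-text proof to compare against; your derivation from parts (2)--(5) of Theorem~\ref{oldprops} is a legitimate self-contained reconstruction, and both directions check out. One remark: your converse direction does more work than the statement requires. A \emph{stable defect} is by definition the defect of \emph{some} stable number, so once you have produced $j\ge0$ with $m=3^jn$ stable and shown $\dft(m)=\alpha$ via minimality, you are already done --- $\alpha$ is the defect of the stable number $m$. The careful stitching argument showing that $n$ itself is stable (constancy of the nonincreasing sequence $\dft(3^in)$ on $[0,j]$ plus the stable tail) is sound but unnecessary for the stated claim; what it actually establishes is the stronger fact, recorded separately as part (9) of Theorem~\ref{oldprops}, that every number realizing a stable defect is stable. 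Your forward direction, splitting on the sign of $k$ in $m=n3^k$ and applying monotonicity in the appropriate direction, is exactly the right argument and handles the only real content of the proposition.
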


We can also define the \emph{stable defect} of a given number, which we denote
$\dft_\st(n)$.

\begin{defn}
For a positive integer $n$, define the \emph{stable defect} of $n$, denoted
$\dft_\st(n)$, to be $\dft(3^k n)$ for any $k$ such that $3^k n$ is stable.
(This is well-defined as if $3^k n$ and $3^\ell n$ are stable, then $k\ge \ell$
implies $\dft(3^k n)=\dft(3^\ell n)$, and $\ell\ge k$ implies this as well.)
\end{defn}

Note that the statement ``$\alpha$ is a stable defect'', which earlier we were
thinking of as ``$\alpha=\dft(n)$ for some stable $n$'', can also be read as the
equivalent statement ``$\alpha=\dft_\st(n)$ for some $n$''.

Similarly we have the stable complexity:
\begin{defn}
For a positive integer $n$, define the \emph{stable complexity} of $n$, denoted
$\cpx{n}_\st$, to be $\cpx{3^k n}-3k$ for any $k$ such that $3^k n$ is stable.
\end{defn}

We then have the following facts relating the notions of $\cpx{n}$, $\dft(n)$,
$\cpx{n}_\st$, and $\dft_\st(n)$:

\begin{prop}
\label{stoldprops}
We have:
\begin{enumerate}
\item $\dft_\st(n)= \min_{k\ge 0} \dft(3^k n)$
\item $\dft_\st(n)$ is the smallest $\alpha\in\mathscr{D}$ such that
$\alpha\equiv \dft(n) \pmod{1}$.
\item $\cpx{n}_\st = \min_{k\ge 0} (\cpx{3^k n}-3k)$
\item $\dft_\st(n)=\cpx{n}_\st-3\log_3 n$
\item $\dft_\st(n) \le \dft(n)$, with equality if and only if $n$ is stable.
\item $\cpx{n}_\st \le \cpx{n}$, with equality if and only if $n$ is stable.
\item $\cpx{3n}_\st = \cpx{n}_\st+3$
\item If $\dft_\st(n)=\dft_\st(m)$, then $\cpx{n}_\st\equiv\cpx{m}_\st\pmod{3}$.
\end{enumerate}
\end{prop}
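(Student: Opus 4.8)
The plan is to reduce the entire proposition to the single identity
\[ \cpx{3^k n} - 3k = \dft(3^k n) + 3\log_3 n, \qquad k \ge 0, \]
which is immediate from the definition $\dft(m) = \cpx{m} - 3\log_3 m$ applied to $m = 3^k n$. The only other ingredients needed are Theorem~\ref{oldprops}(2), which says the sequence $k \mapsto \dft(3^k n)$ is non-increasing and changes by integer amounts; Theorem~\ref{oldprops}(3) and (5), which say this sequence becomes, and stays, constant precisely once $3^k n$ becomes stable and that this does happen for some $k$; and Proposition~\ref{modz1}, characterizing stable defects. Everything else is bookkeeping around these facts.

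First I would prove (1): the sequence $k \mapsto \dft(3^k n)$ is non-increasing and eventually constant, equal for large $k$ to $\dft(3^K n) = \dft_\st(n)$ whenever $3^K n$ is stable, so its minimum over $k \ge 0$ exists and equals $\dft_\st(n)$. Then (3) follows by applying the identity above termwise inside the minimum, and (4) by evaluating the identity at a stabilizing exponent $K$ (equivalently, by taking the minimum of both sides and invoking (1)). For (2): $\dft_\st(n)$ is by construction the defect of a stable number, hence a stable defect, so Proposition~\ref{modz1} identifies it as the least $\beta \in \mathscr{D}$ with $\beta \equiv \dft_\st(n) \pmod 1$; and $\dft_\st(n) \equiv \dft(n) \pmod 1$ since their difference is an integer by Theorem~\ref{oldprops}(2).

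Next, (5) is (1) evaluated at the single index $k = 0$, with the equality case again supplied by Theorem~\ref{oldprops}(2),(3); and (6) is then (5) combined with the identity rewritten as $\cpx{m} = \dft(m) + 3\log_3 m$. For (7) I would first note $\dft_\st(3n) = \dft_\st(n)$, which by (1) amounts to $\min_{k\ge0}\dft(3^k\cdot 3n) = \min_{j\ge1}\dft(3^j n) = \min_{j\ge0}\dft(3^j n)$, the last equality because the $j = 0$ term is the largest; then (4) gives $\cpx{3n}_\st = \dft_\st(3n) + 3\log_3(3n) = \dft_\st(n) + 3\log_3 n + 3 = \cpx{n}_\st + 3$. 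Finally (8): picking $K, L$ with $3^K n$ and $3^L m$ stable, the hypothesis gives $\dft(3^K n) = \dft_\st(n) = \dft_\st(m) = \dft(3^L m)$, so $\cpx{3^K n} \equiv \cpx{3^L m} \pmod 3$ by Theorem~\ref{oldprops}(7); since $\cpx{n}_\st = \cpx{3^K n} - 3K$ and $\cpx{m}_\st = \cpx{3^L m} - 3L$, reducing modulo $3$ finishes it.

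I do not expect a genuine obstacle here; the proposition is entirely a consequence of the displayed identity together with the monotone-then-constant behaviour of $\dft(3^k n)$. The one place calling for mild care is checking that the ``$\min_{k \ge 0}$'' descriptions in (1) and (3) really coincide with the ``evaluate at any stabilizing exponent'' definitions of $\dft_\st$ and $\cpx{\cdot}_\st$, and that the index shift in (7) leaves the minimum unchanged --- both of which come down to the sequence being non-increasing, so that its value over any cofinite set of indices equals its eventual, minimal, value.
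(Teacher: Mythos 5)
Your proposal is correct, and every step checks out; the difference from the paper is mostly one of self-containedness. For statements (1)--(6) the paper simply cites Propositions 3.5, 3.7, and 3.8 of \cite{paperwo}, whereas you derive them from scratch out of Theorem~\ref{oldprops} and Proposition~\ref{modz1} via the identity $\cpx{3^k n}-3k=\dft(3^k n)+3\log_3 n$ and the monotone-then-constant behaviour of $k\mapsto\dft(3^k n)$; this is a perfectly sound reconstruction of what those cited propositions establish, and it buys the reader a proof that does not require chasing references. For (7) the paper argues directly from the definition of stable complexity ($\cpx{3n}_\st=\cpx{3^k n}-3(k-1)$ when $3^k n$ is stable), while you route through $\dft_\st(3n)=\dft_\st(n)$ and statement (4); both are one-line arguments. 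For (8) the paper uses statement (2) to get $\dft(n)\equiv\dft(m)\pmod 1$, then Theorem~\ref{oldprops}(4) to conclude $n=m3^k$ and hence $\cpx{n}_\st=\cpx{m}_\st+3k$ by (7), whereas you apply Theorem~\ref{oldprops}(7) to the stabilized numbers $3^K n$ and $3^L m$ directly and reduce modulo $3$; your version is marginally shorter and avoids invoking statement (7) of the proposition being proved. No gaps.
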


\begin{proof}
Statements (1)-(6) are just Propositions~3.5, 3.7, and 3.8 from \cite{paperwo}.
Statement (7) follows from the definition of stable complexity; if $3^k n$ is
stable, then $\cpx{3n}_\st=\cpx{3^k n}-3(k-1)=\cpx{3^k n}-3k+3=\cpx{n}_\st+3$.
To prove statement (8), note that if $\dft_\st(n)=\dft_\st(m)$, then by
statement (2) one has $\dft(n)\equiv\dft(m)\pmod{1}$, and so by
Propostion~\ref{oldprops}, one has that $n=m3^k$ for some $k\in\mathbb{Z}$, and
so $\cpx{n}_\st=\cpx{m}_\st+3k$.
\end{proof}

Note, by the way, that just as $\mathscr{D}_\st$ can be characterized either as
defects $\dft(n)$ with $n$ stable or as defects $\dft_\st(n)$ for any $n$,
$\mathscr{D}^a_\st$ can be characterized either as defects $\dft(n)$ with $n$
stable and $\cpx{n}\equiv a\pmod{3}$, or as defects $\dft_\st(n)$ for any $n$
with $\cpx{n}_\st\equiv a\pmod{3}$.

Three defects that will be particularly important in this paper are the smallest
three defects:
\begin{prop}
\label{small3}
\[ \mathscr{D}\cap [0,2\dft(2)] = \{0,\dft(2),2\dft(2)\}. \]
\end{prop}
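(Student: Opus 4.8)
The plan is to prove this by directly computing, for each small value of $k$, all defects $\dft(n)$ with $\cpx{n} = k$, and checking which of these fall in the interval $[0, 2\dft(2)]$. First I would recall that $\dft(2) = \cpx{2} - 3\log_3 2 = 2 - 3\log_3 2 \approx 0.107$, so $2\dft(2) \approx 0.215$, and in particular $2\dft(2) < 1$. By part (8) of Theorem~\ref{oldprops}, the only integer defect less than $1$ is $0$, achieved exactly by the powers $3^k$ with $k \ge 1$ (and $\dft(1) = 1$ is excluded since $1 > 2\dft(2)$). So $0 \in \mathscr{D} \cap [0, 2\dft(2)]$, and I must show the only other defects in this range are $\dft(2)$ and $2\dft(2)$.

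Next I would use the fact that $\dft(n) = \cpx{n} - 3\log_3 n$ together with the bound $\dft(n) \le 2\dft(2) < 1$ to constrain $n$ relative to its complexity. If $\cpx{n} = k$, then $\dft(n) \le 2\dft(2)$ forces $3\log_3 n \ge k - 2\dft(2)$, i.e. $n \ge 3^{(k-2\dft(2))/3} = 3^{k/3} \cdot 4^{-2/3}$ (using $3^{\dft(2)} = 3 \cdot 2^{-1}$, wait — better: $3^{2\dft(2)/3} = (3^{\dft(2)})^{2/3}$, and $3^{\dft(2)} = 3^{2}/3^{3\log_3 2} = 9/8$, so the bound is $n \ge 3^{k/3}(8/9)^{1/3}$). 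Combined with $n \le E(k)$ from Selfridge's theorem, this confines $n$ to a narrow band just below $E(k)$: concretely, $n$ must be at least roughly $0.96 \cdot 3^{k/3}$ when $k \equiv 0$, etc. For each residue class of $k$ modulo $3$, only a handful of candidate values of $n$ survive — essentially $E(k)$ itself and $E_1(k) = \frac{8}{9}E(k)$ (by Rawsthorne's theorem), since $\frac{8}{9} \cdot E(k)$ has defect exactly $3\log_3(9/8) = 2\dft(2)$ while the next value down, $E_2(k)$, has defect exceeding $2\dft(2)$ for all $k$ in range. I would verify that $\dft(2\cdot 3^j) = \dft(2) = 2 - 3\log_3 2$ and that $\dft(\frac{8}{9}E(k)) = 2\dft(2)$ in each congruence class, using $\cpx{2\cdot 3^j} = 3j+2$ and $\cpx{\frac{8}{9}E(k)} = k$ (the latter from Rawsthorne).

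The main obstacle is making the case analysis airtight for small $k$, where Selfridge's and Rawsthorne's asymptotic formulas may not yet apply and where one cannot yet rely on the clean band-counting argument — one must handle the base cases $k \le 7$ or so by explicit finite computation of all $\cpx{n}$ for the finitely many relevant $n$. Once $k$ is large enough that both $E(k)$ and $E_1(k)$ are governed by the formulas and the gap between $E_1(k)$ and $E_2(k)$ is provably wide enough (which follows once $9 \mid E(k)$, as noted after Theorem~\ref{tablethm}), the argument is uniform: the only $n$ with $\cpx{n} = k$ and $\dft(n) \le 2\dft(2)$ are $E(k)$ (defect $0$, or defect $0$ up to the $k \equiv 1,2$ adjustments — actually $\dft(E(k)) = 0$ only when $k \equiv 0$; when $k \equiv 2$, $E(k) = 2 \cdot 3^{(k-2)/3}$ has defect $\dft(2)$, and when $k \equiv 1$, $E(k) = 4\cdot 3^{(k-4)/3}$ has defect $2\dft(2)$) and $E_1(k)$. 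Collating across all three residue classes, the set of defects that appear is exactly $\{0, \dft(2), 2\dft(2)\}$, which completes the proof. I would also double-check that no defect strictly between these three values arises — this is where the inequality $\dft(n) > 2\dft(2)$ for $n \le E_2(k)$ (equivalently $n \le \frac{64}{81}E(k)$ or the appropriate variant) does the work, since $3\log_3(81/64) = 4\dft(2) > 2\dft(2)$.
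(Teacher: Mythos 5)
Your overall strategy is viable and genuinely different from the paper's: the paper disposes of this in one line by citing Proposition~37 of \cite{paper1}, which classifies all leaders of defect less than $3\dft(2)$ as $\{3,2,4\}$; you instead rebuild the classification from Selfridge's formula for $E(k)$ and Rawsthorne's formula for $E_1(k)$, confining any $n$ with $\cpx{n}=k$ and $\dft(n)\le 2\dft(2)$ to the band $4\cdot 3^{(k-4)/3}\le n\le E(k)$ and then checking which numbers of complexity $k$ land there. That is a legitimate, more self-contained route (modulo a finite check for small $k$, as you note).

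However, the verification you promise to carry out would fail as stated: $\tfrac{8}{9}E(k)$ does \emph{not} have defect $2\dft(2)$. One has $3\log_3(9/8)=6-9\log_3 2=3\dft(2)$, not $2\dft(2)$; more precisely $\dft\bigl(\tfrac{8}{9}E(k)\bigr)$ equals $3\dft(2)$, $4\dft(2)$, $5\dft(2)$ for $k\equiv 0,2,1\pmod 3$ respectively, all of which exceed $2\dft(2)\approx 0.2144$. (Similarly $3\log_3(81/64)=6\dft(2)$, not $4\dft(2)$.) So $E_1(k)$ contributes nothing to $\mathscr{D}\cap[0,2\dft(2)]$; your conclusion survives only because the value $2\dft(2)$ you erroneously attribute to $E_1(k)$ happens to coincide with one already supplied by $E(k)$ for $k\equiv 1\pmod 3$. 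The correct computation actually simplifies your argument: since $\dft(E_1(k))\ge 3\dft(2)>2\dft(2)$ and, for fixed complexity, defect decreases as $n$ increases, the \emph{only} number of complexity $k$ with defect at most $2\dft(2)$ is $E(k)$ itself, whose defect is $0$, $\dft(2)$, or $2\dft(2)$ according to $k\bmod 3$. With that repair (and the small-$k$ base cases, $k\le 7$, done by hand, plus the observation from part~(8) of Theorem~\ref{oldprops} that $\dft(1)=1$ is excluded), the proof goes through. There is also a minor slip in the band bound itself: the lower endpoint is $3^{k/3}(8/9)^{2/3}=4\cdot 3^{(k-4)/3}$, not $3^{k/3}(8/9)^{1/3}$.
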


\begin{proof}
Proposition~37 from \cite{paper1} tells us that the only leaders with defect
less than $3\dft(2)$ are $3$, $2$, and $4$, which respectively have defects $0$,
$\dft(2)$, and $2\dft(2)$.
\end{proof}

\subsection{Low-defect polynomials}
\label{polysubsec}

As has been mentioned in Section~\ref{intropoly}, we are going to represent the
set of numbers with defect at most $r$ by substituting in powers of $3$ into
certain multilinear polynomials we call \emph{low-defect polynomials}.  We will
associate with each one a ``base complexity'' to form a \emph{low-defect pair}.
In this section we will review the basic properties of these polynomials.
First, their definition:

\begin{defn}
\label{polydef}
We define the set $\mathscr{P}$ of \emph{low-defect pairs} as the smallest
subset of $\Z[x_1,x_2,\ldots]\times \N$ such that:
\begin{enumerate}
\item For any constant polynomial $k\in \N\subseteq\Z[x_1, x_2, \ldots]$ and any
$C\ge \cpx{k}$, we have $(k,C)\in \mathscr{P}$.
\item Given $(f_1,C_1)$ and $(f_2,C_2)$ in $\mathscr{P}$, we have $(f_1\otimes
f_2,C_1+C_2)\in\mathscr{P}$, where, if $f_1$ is in $d_1$ variables and $f_2$ is
in $d_2$ variables,
\[ (f_1\otimes f_2)(x_1,\ldots,x_{d_1+d_2}) :=
	f_1(x_1,\ldots,x_{d_1})f_2(x_{d_1+1},\ldots,x_{d_1+d_2}). \]
\item Given $(f,C)\in\mathscr{P}$, $c\in \N$, and $D\ge \cpx{c}$, we have
$(f\otimes x_1 + c,C+D)\in\mathscr{P}$ where $\otimes$ is as above.
\end{enumerate}

The polynomials obtained this way will be referred to as \emph{low-defect
polynomials}.  If $(f,C)$ is a low-defect pair, $C$ will be called its
\emph{base complexity}.  If $f$ is a low-defect polynomial, we will define its
\emph{absolute base complexity}, denoted $\cpx{f}$, to be the smallest $C$ such
that $(f,C)$ is a low-defect pair.
We will also associate to a low-defect polynomial $f$ the \emph{augmented
low-defect polynomial}
\[ \xpdd{f} = f\otimes x_1; \]
if $f$ is in $d$ variables, this is $fx_{d+1}$.
\end{defn}

In this paper we will only concern ourselves with low-defect pairs
$(f,C)$ where $C=\cpx{f}$, so in the remainder of what follows, we will mostly
dispense with the formalism of low-defect pairs and just discuss low-defect
polynomials.

Note that the degree of a low-defect polynomial is also equal to the number of
variables it uses; see Proposition~\ref{polystruct}.
Also note that augmented low-defect polynomials are never themselves low-defect
polynomials; as we will see in a moment (Proposition~\ref{polystruct}),
low-defect polynomials always have nonzero constant term, whereas augmented
low-defect polynomials always have zero constant term.  We can also observe
that low-defect polynomials are in fact read-once polynomials as discussed in
for instance \cite{ROF}.

Note that we do not really care about what variables a low-defect polynomial is
in -- if we permute the variables of a low-defect polynomial or replace them
with others, we will still regard the result as a low-defect polynomial.  From
this perspective, the meaning of $f\otimes g$ could be simply regarded as
``relabel the variables of $f$ and $g$ so that they do not share any, then
multiply $f$ and $g$''.  Helpfully, the $\otimes$ operator is associative not
only with this more abstract way of thinking about it, but also in the concrete
way it was defined above.

In \cite{paperwo} were proved the following propositions about low-defect
polynomials:

\begin{prop}
\label{polystruct}
Suppose $f$ is a low-defect polynomial of degree $d$.  Then $f$ is a
polynomial in the variables $x_1,\ldots,x_d$, and it is a multilinear
polynomial, i.e., it has degree $1$ in each of its variables.  The coefficients
are non-negative integers.  The constant term is nonzero, and so is the
coefficient of $x_1\cdots x_d$, which we will call the \emph{leading
coefficient} of $f$.
\end{prop}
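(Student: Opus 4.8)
The plan is to argue by structural induction on the recursive definition of $\mathscr{P}$ from Definition~\ref{polydef}, proving for every low-defect pair $(f,C)\in\mathscr{P}$ that $f$ has all the listed properties, together with the auxiliary fact — needed even to make sense of the statement — that the number of variables actually occurring in $f$ equals $\deg f$, so that after relabeling $f\in\Z[x_1,\ldots,x_d]$ with $d=\deg f$. Since $\mathscr{P}$ is \emph{defined} to be the smallest subset closed under the three generation rules, it suffices to verify the base case (rule~(1)) and that rules~(2) and (3) each preserve the conjunction of all these properties. For the base case, a constant polynomial $k\in\N$ (with $k\ge 1$, so $\cpx{k}$ is defined) has degree $0$ and zero variables, is vacuously multilinear, has the single non-negative integer coefficient $k$, has nonzero constant term $k$, and — reading the empty product $x_1\cdots x_0$ as $1$ — has leading coefficient $k\ne 0$.

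For rule~(2), suppose $(f_1,C_1),(f_2,C_2)\in\mathscr{P}$ satisfy the inductive hypothesis, with each $f_i$ a multilinear polynomial of degree $d_i$ in exactly the $d_i$ variables $x_1,\ldots,x_{d_i}$, with non-negative integer coefficients, nonzero constant term, and nonzero leading coefficient. Relabeling the variables of $f_2$ to be disjoint from those of $f_1$, the product $f_1\otimes f_2$ is a polynomial in $d_1+d_2$ variables; multiplying polynomials in disjoint variable sets adds degrees and preserves multilinearity, so $f_1\otimes f_2$ is multilinear of degree $d_1+d_2$, equal to its variable count. Its coefficients are sums of products of non-negative integers, hence non-negative integers; its constant term is the product of the two nonzero constant terms; and the coefficient of the product of all $d_1+d_2$ variables is the product of the two nonzero leading coefficients.

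For rule~(3), suppose $(f,C)\in\mathscr{P}$ satisfies the inductive hypothesis, with $f$ of degree $d$ in $x_1,\ldots,x_d$, and let $c\in\N$ (with $c\ge1$) and $D\ge\cpx{c}$. Then $f\otimes x_1$ is $f$ times a fresh variable, hence a multilinear polynomial of degree $d+1$ in $d+1$ variables, with non-negative integer coefficients, constant term $0$, and coefficient of the product of all $d+1$ variables equal to the leading coefficient of $f$. Adding the constant $c$ changes only the constant term, making it $c\ne 0$, and leaves the degree, the variable count, multilinearity, and the non-negativity, integrality and leading coefficient of the coefficients unchanged; so $f\otimes x_1 + c$ has every required property.

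Since the base case has all the listed properties and all three rules preserve them, every low-defect polynomial does, which completes the induction. The proof is essentially pure bookkeeping; the only points that call for attention are carrying the auxiliary invariant ``number of variables $=\deg f$'' through the induction alongside the asserted properties (the assertions tacitly presuppose it), and keeping track of which monomial is meant by ``$x_1\cdots x_d$'' given that low-defect polynomials are only defined up to relabeling of their variables. I do not expect any genuine obstacle.
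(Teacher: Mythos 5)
Your structural induction is correct and complete; all three generation rules are handled properly, and you rightly flag the need to carry the invariant that the variable count equals the degree (equivalently, that the nonvanishing of the coefficient of $x_1\cdots x_d$ forces every variable to actually occur). The paper itself does not prove this statement but simply cites Proposition~4.2 of the earlier well-ordering paper, and your argument is exactly the standard bookkeeping induction that the cited result rests on, so there is nothing to correct.
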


\begin{proof}
This is Proposition~4.2 from \cite{paperwo}.
\end{proof}

\begin{prop}
\label{basicub}
If $f$ is a low-defect polynomial of degree $d$, then
\[\cpx{f(3^{n_1},\ldots,3^{n_d})}\le \cpx{f}+3(n_1+\ldots+n_d).\]
and
\[\cpx{\xpdd{f}(3^{n_1},\ldots,3^{n_{d+1}})}\le \cpx{f}+3(n_1+\ldots+n_{d+1}).\]
\end{prop}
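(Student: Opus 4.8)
The plan is to prove the first inequality by structural induction on the recursive definition of low-defect pairs (Definition~\ref{polydef}), and then to deduce the second (augmented) inequality from the first. It is cleanest to prove the slightly more general statement that for \emph{every} low-defect pair $(f,C)\in\mathscr{P}$, with $f$ of degree $d$, one has $\cpx{f(3^{n_1},\ldots,3^{n_d})}\le C+3(n_1+\ldots+n_d)$; the assertion involving $\cpx{f}$ then follows by taking $C=\cpx{f}$. Throughout I would use two elementary facts about integer complexity, namely $\cpx{ab}\le\cpx{a}+\cpx{b}$ and $\cpx{a+b}\le\cpx{a}+\cpx{b}$ for positive integers $a,b$, together with $\cpx{3^j}\le 3j$ (from Theorem~\ref{selfridge}, or simply by writing $3^j$ as a product of $j$ copies of $1+1+1$). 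Combining these gives $\cpx{m\cdot 3^j}\le\cpx{m}+3j$ for any positive integer $m$ and any $j\ge 0$, the case $j=0$ being the trivial equality $\cpx{m}=\cpx{m}$. Note also that, by Proposition~\ref{polystruct}, a low-defect polynomial has nonnegative coefficients and nonzero constant term, so $f(3^{n_1},\ldots,3^{n_d})$ is always a positive integer and the sub-additivity facts above apply to it.

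For the base case $(f,C)=(k,C)$ with $k\in\N$ a constant polynomial and $C\ge\cpx{k}$, we have $d=0$ and the claim is just $\cpx{k}\le C$. For the $\otimes$-step, suppose $f=f_1\otimes f_2$ and $C=C_1+C_2$ with $(f_1,C_1),(f_2,C_2)\in\mathscr{P}$ of degrees $d_1,d_2$; then $f(3^{n_1},\ldots,3^{n_{d_1+d_2}})=f_1(3^{n_1},\ldots,3^{n_{d_1}})\cdot f_2(3^{n_{d_1+1}},\ldots,3^{n_{d_1+d_2}})$, and applying submultiplicativity together with the two inductive hypotheses yields the bound $C_1+3(n_1+\ldots+n_{d_1})+C_2+3(n_{d_1+1}+\ldots+n_{d_1+d_2})$, which is exactly $C+3(n_1+\ldots+n_{d_1+d_2})$. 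For the remaining step, suppose $f=g\otimes x_1+c$ and $C=C'+D$, where $(g,C')\in\mathscr{P}$ has degree $d-1$, $c\in\N$, and $D\ge\cpx{c}$. Then $f(3^{n_1},\ldots,3^{n_d})=g(3^{n_1},\ldots,3^{n_{d-1}})\cdot 3^{n_d}+c$, so by subadditivity, by $\cpx{m\cdot 3^j}\le\cpx{m}+3j$ applied with $m=g(3^{n_1},\ldots,3^{n_{d-1}})$ and $j=n_d$, and by the inductive hypothesis for $g$, we obtain
\[ \cpx{f(3^{n_1},\ldots,3^{n_d})}\le\cpx{g(3^{n_1},\ldots,3^{n_{d-1}})}+3n_d+\cpx{c}\le C'+3(n_1+\ldots+n_{d-1})+3n_d+D, \]
which equals $C+3(n_1+\ldots+n_d)$. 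This completes the induction and establishes the first inequality.

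The second inequality then follows from the first with no further induction: since $\xpdd{f}=f\otimes x_1$ equals $f(x_1,\ldots,x_d)\,x_{d+1}$, we have $\xpdd{f}(3^{n_1},\ldots,3^{n_{d+1}})=f(3^{n_1},\ldots,3^{n_d})\cdot 3^{n_{d+1}}$, so applying $\cpx{m\cdot 3^j}\le\cpx{m}+3j$ with $m=f(3^{n_1},\ldots,3^{n_d})$ and $j=n_{d+1}$, followed by the first inequality, gives $\cpx{\xpdd{f}(3^{n_1},\ldots,3^{n_{d+1}})}\le\cpx{f}+3(n_1+\ldots+n_d)+3n_{d+1}$.

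I do not expect a serious obstacle here; the argument is a routine structural induction. The only points needing a little care are keeping track of the variable indices under $\otimes$, verifying that every argument of $\cpx{\cdot}$ that appears is a positive integer so that the sub-additivity facts are applicable (which is exactly what Proposition~\ref{polystruct} provides), and noting that $\cpx{m\cdot 3^j}\le\cpx{m}+3j$ must be handled as a stand-alone fact rather than derived from $\cpx{3^j}\le 3j$, because of the degenerate case $j=0$.
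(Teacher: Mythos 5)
Your proof is correct, and it is the standard argument: the paper itself does not prove this proposition but simply cites Proposition~4.5 and Corollary~4.12 of \cite{paperwo}, where the bound is established by exactly this kind of structural induction over the three clauses of Definition~\ref{polydef}, with the augmented case handled by one further application of $\cpx{m\cdot 3^j}\le\cpx{m}+3j$. Your attention to the degenerate case $j=0$ (where one cannot simply invoke $\cpx{3^0}=1$ and submultiplicativity) and to the positivity of all arguments of $\cpx{\cdot}$ via Proposition~\ref{polystruct} covers the only points where such an induction could go wrong.
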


\begin{proof}
This is a combination of Proposition~4.5 and Corollary~4.12 from \cite{paperwo}.
\end{proof}

The above proposition motivates the following definition:

\begin{defn}
Given a low-defect polynomial $f$ (say of degree $d$) and a number $N$, we will
say that $f$ \emph{efficiently $3$-represents} $N$ if there exist
nonnegative integers $n_1,\ldots,n_d$ such that
\[N=f(3^{n_1},\ldots,3^{n_d})\ \textrm{and}\
\cpx{N}=\cpx{f}+3(n_1+\ldots+n_d).\]
We will say $\xpdd{f}$ efficiently
$3$-represents $N$ if there exist $n_1,\ldots,n_{d+1}$ such that
\[N=\xpdd{f}(3^{n_1},\ldots,3^{n_{d+1}})\ \textrm{and}\ 
\cpx{N}=\cpx{f}+3(n_1+\ldots+n_{d+1}).\]
More generally, we will also say $f$ $3$-represents $N$ if there exist
nonnegative integers $n_1,\ldots,n_d$ such that $N=f(3^{n_1},\ldots,3^{n_d})$.
and similarly with $\xpdd{f}$.
\end{defn}

Note that previous papers \cite{paperalg,paperwo,theory} instead spoke of a
low-defect pair $(f,C)$ efficiently $3$-representing a number $N$; however, as
mentioned in those papers, it is only possible for some $(f,C)$ to efficiently
$3$-represent a number $N$ if in fact $C=\cpx{f}$, so there is no loss here.

In keeping with the name, numbers $3$-represented by low-defect polynomials, or
their augmented versions, have bounded defect.  Let us make some definitions
first:

\begin{defn}
Given a low-defect polynomial $f$ we define $\dft(f)$, the defect of $f$,
to be $\cpx{f}-3\log_3 m$, where $m$ is the leading coefficient of $f$.
\end{defn}

\begin{defn}
Given a low-defect polynomial $f$ of degree $d$, we define
\[\dft_f(n_1,\ldots,n_d) =
\cpx{f}+3(n_1+\ldots+n_d)-3\log_3 f(3^{n_1},\ldots,3^{n_d}).\]
\end{defn}

Then we have:

\begin{prop}
\label{dftbd}
Let $f$ be a low-defect polynomial of degree $d$, and let the numbers
$n_1,\ldots,n_{d+1}$ be nonnegative integers.
\begin{enumerate}
\item We have
\[ \dft(\xpdd{f}(3^{n_1},\ldots,3^{n_{d+1}}))\le \dft_f(n_1,\ldots,n_d),\]
and the difference is an integer.
\item We have \[\dft_f(n_1,\ldots,n_d)\le\dft(f),\]
and if $d\ge 1$, this inequality is strict.
\item The function $\dft_f$ is strictly increasing in each variable, and
\[ \dft(f) = \sup_{n_1,\ldots,n_d} \dft_f(n_1,\ldots,n_d).\]
\end{enumerate}
\end{prop}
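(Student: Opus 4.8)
The plan is to reduce everything to direct manipulation of the definitions, using Proposition~\ref{basicub} for the upper bound in part (1) and Proposition~\ref{polystruct} for the structural facts about $f$ needed in parts (2) and (3). Throughout, write $m$ for the leading coefficient of $f$ and, for a fixed choice of nonnegative integers $n_1,\ldots,n_d$, abbreviate $M = f(3^{n_1},\ldots,3^{n_d})$.

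For part (1): set $N = \xpdd{f}(3^{n_1},\ldots,3^{n_{d+1}}) = M\cdot 3^{n_{d+1}}$, so $3\log_3 N = 3\log_3 M + 3 n_{d+1}$ and hence $\dft(N) = \cpx{N} - 3\log_3 M - 3n_{d+1}$. By Proposition~\ref{basicub}, $\cpx{N} \le \cpx{f} + 3(n_1+\cdots+n_{d+1})$; substituting gives $\dft(N) \le \cpx{f} + 3(n_1+\cdots+n_d) - 3\log_3 M = \dft_f(n_1,\ldots,n_d)$. For the integrality claim, subtract the two sides: $\dft_f(n_1,\ldots,n_d) - \dft(N) = \cpx{f} + 3(n_1+\cdots+n_{d+1}) - \cpx{N}$, a difference of integers.

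For part (2): after cancelling $\cpx{f}$ from both sides, the inequality $\dft_f(n_1,\ldots,n_d) \le \dft(f)$ is equivalent to $m\cdot 3^{n_1+\cdots+n_d} \le M$. By Proposition~\ref{polystruct}, $f$ is multilinear with nonnegative integer coefficients and leading coefficient $m$ (the coefficient of $x_1\cdots x_d$); substituting $x_i = 3^{n_i} \ge 1$, the leading term alone contributes $m\cdot 3^{n_1+\cdots+n_d}$ and every remaining term is nonnegative, which gives the inequality. When $d \ge 1$, the constant term of $f$ is a term distinct from the leading one and is nonzero by Proposition~\ref{polystruct}, so it contributes a strictly positive amount, making the inequality strict.

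For part (3): to see $\dft_f$ is strictly increasing in $n_j$, write $f = x_j g + h$ with $g, h$ not involving $x_j$ (possible since $f$ has degree $1$ in $x_j$), and let $G, H$ be the values of $g, h$ at the substituted coordinates; then $G, H \ge 0$, and crucially $H$ is at least the constant term of $f$, which is nonzero, so $H > 0$. The increment is $\dft_f(\ldots,n_j+1,\ldots) - \dft_f(\ldots,n_j,\ldots) = 3 - 3\log_3\frac{3^{n_j+1}G + H}{3^{n_j}G + H}$, and the fraction is strictly less than $3$ precisely because $H > 0$, so the increment is positive. Finally, $\dft(f) - \dft_f(n_1,\ldots,n_d) = 3\log_3\frac{M}{m\cdot 3^{n_1+\cdots+n_d}}$, and dividing $M$ by $3^{n_1+\cdots+n_d}$ leaves $m$ plus terms each carrying a factor $3^{-n_i}$ for some $i$; as every $n_i \to \infty$ this tends to $m$, so the displayed difference tends to $0$. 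Together with part (2) and monotonicity, this yields $\dft(f) = \sup_{n_1,\ldots,n_d} \dft_f(n_1,\ldots,n_d)$. None of the steps poses a real obstacle; the one point needing care is the observation in part (3) that the non-$x_j$ part of $f$ remains positive under substitution, which is exactly where the nonvanishing constant term from Proposition~\ref{polystruct} is used — the same fact that supplies the strictness in part (2).
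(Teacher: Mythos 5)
Your argument is correct. Note, though, that the paper itself does not prove this proposition at all --- it simply cites Proposition~4.9 and Corollary~4.14 of the well-ordering paper and Proposition~2.15 of the representation paper --- so what you have written is a self-contained reconstruction rather than a parallel to an in-text argument. Your route is the natural one and checks out at every step: part (1) is exactly the combination of Proposition~\ref{basicub} with $3\log_3 \xpdd{f}(3^{n_1},\ldots,3^{n_{d+1}}) = 3\log_3 f(3^{n_1},\ldots,3^{n_d}) + 3n_{d+1}$; part (2) reduces correctly to $m\cdot 3^{n_1+\cdots+n_d} \le f(3^{n_1},\ldots,3^{n_d})$, which follows from multilinearity, nonnegativity of coefficients, and the nonzero constant term supplied by Proposition~\ref{polystruct}; and in part (3) you correctly isolate the one point needing care, namely that the part of $f$ not involving $x_j$ evaluates to something strictly positive (it dominates the constant term), which is what makes the ratio $\frac{3\cdot 3^{n_j}G+H}{3^{n_j}G+H}$ strictly less than $3$ and hence the increment strictly positive. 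The limit computation for the supremum --- every non-leading monomial picks up at least one factor $3^{-n_i}$ after dividing by $3^{n_1+\cdots+n_d}$ --- is also sound. The only thing your write-up buys beyond the paper is independence from the cited references; the only thing it costs is that a reader must trust Propositions~\ref{polystruct} and \ref{basicub}, which are themselves imported.
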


\begin{proof}
This is a combination of Proposition~4.9 and Corollary~4.14 from \cite{paperwo}
and Proposition~2.15 from \cite{theory}.
\end{proof}

Importantly, the set of defects coming from a low-defect polynomial of degree
$r$ has order type approximately $\omega^r$; if rather than the actual defects
we use $\dft_f$, then this is exact.  More formally:

\begin{prop}
\label{oldwo}
Let $f$ be a low-defect polynomial of degree $d$.  Then:
\begin{enumerate}
\item The image of $\dft_f$ is a well-ordered subset of $\mathbb{R}$, with
order type $\omega^d$.
\item The set of $\dft(N)$ for all $N$ $3$-represented by the augmented
low-defect polynomial $\xpdd{f}$ is a well-ordered subset of $\mathbb{R}$, with
order type at least $\omega^d$ and at most $\omega^d(\lfloor \delta(f)
\rfloor+1)<\omega^{d+1}$.  The same is true if $f$ is used instead of the
augmented version $\xpdd{f}$.
\end{enumerate}
\end{prop}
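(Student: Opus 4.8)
The plan is to prove statement (1) by induction on $d=\deg f$ (equivalently, by structural induction along Definition~\ref{polydef}), and then to obtain statement (2) from (1) together with Proposition~\ref{dftbd}.

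For (1), the case $d=0$ is immediate: $f$ is then a constant, $\dft_f$ is a constant function of no variables, and its image is a single point, of order type $1=\omega^0$. For $d\ge1$, I would single out one variable $x_d$ and write $f$, as a multilinear polynomial, as $f=Ax_d+B$, where $A$ is the coefficient of $x_d$ and $B$ collects the remaining terms. A short structural induction over Definition~\ref{polydef} shows (after suitably choosing which variable to call $x_d$) that $A$ is again a low-defect polynomial, now of degree exactly $d-1$ — it carries the monomial $\prod_{i<d}x_i$ with the leading coefficient of $f$, by Proposition~\ref{polystruct} — and that $B$ has nonzero constant term. Substituting $x_i=3^{a_i}$ and simplifying gives
\[ \dft_f(a_1,\dots,a_{d-1},n)=p(a_1,\dots,a_{d-1})-3\log_3\!\big(1+u(a_1,\dots,a_{d-1})\,3^{-n}\big), \]
where $u=B(3^{a_1},\dots,3^{a_{d-1}})/A(3^{a_1},\dots,3^{a_{d-1}})>0$ and $p(a_1,\dots,a_{d-1})=\cpx{f}-\cpx{A}+\dft_A(a_1,\dots,a_{d-1})$. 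Thus, for each fixed $(a_1,\dots,a_{d-1})$, the numbers $\dft_f(a_1,\dots,a_{d-1},n)$, $n\in\N$, form a strictly increasing $\omega$-sequence — a \emph{column} — whose supremum $p(a_1,\dots,a_{d-1})$ is not attained; and the set $P$ of all these suprema is a translate of the image of $\dft_A$, which by the inductive hypothesis is well-ordered of order type $\omega^{d-1}$.

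It remains to show that the image of $\dft_f$, namely the union of the columns, is well-ordered of order type exactly $\omega^d$, and this is the step I expect to be the main obstacle. Since $P$ is well-ordered and every point of the image lies below some element of $P$ (the supremum of its own column), there is a well-defined map $x\mapsto\min\{p\in P:p>x\}$ from the image to $P$; its fibre over a non-limit element $p$ of $P$ is exactly the set of image points in $[p^-,p)$ (with $p^-$ the predecessor of $p$, or $0$ if $p=\min P$), and its fibre over a limit element is empty, so the image is the disjoint union, in increasing order, of these fibres. The lower bound is then easy: each fibre over a non-limit $p$ contains the infinite tail of a column with supremum $p$, so the order type is at least $\omega\cdot(\text{order type of the non-limit elements of }P)=\omega\cdot\omega^{d-1}=\omega^d$. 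The upper bound is where the work lies: one must show each such fibre is well-ordered with order type bounded, uniformly in $p$, by a finite multiple of $\omega$, so that summing over $P$ yields order type at most $\omega^{d-1}\cdot\omega=\omega^d$. For this one exploits the structure of low-defect polynomials through the identity $\dft(f)-\dft_f(a_1,\dots,a_d)=3\log_3(1+m^{-1}\sum_{\emptyset\ne T\subseteq[d]}c_{[d]\setminus T}\,3^{-\sum_{i\in T}a_i})$ (with $m$ the leading coefficient), in which the terms indexed by larger sets $T$ decay faster than those indexed by singletons; consequently a column whose supremum exceeds $p$ can meet a fixed window $[p^-,p)$ only for finitely many $(a_1,\dots,a_{d-1})$ in a bounded region, and then in finitely many points, which pile up into only finitely many $\omega$-blocks. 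Making this estimate precise — quantifying exactly how columns interleave inside a single gap of $P$ — is the delicate part of the argument.

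For (2), let $f$ have degree $d$. By Proposition~\ref{dftbd}(1), for all nonnegative integers $n_1,\dots,n_{d+1}$ we have $\dft(\xpdd{f}(3^{n_1},\dots,3^{n_{d+1}}))=\dft_f(n_1,\dots,n_d)-j$ for some nonnegative integer $j$, and by Proposition~\ref{dftbd}(2), $0\le\dft_f(n_1,\dots,n_d)\le\dft(f)$, so $0\le j\le\lfloor\dft(f)\rfloor$. Hence the set of these defects is contained in $\bigcup_{j=0}^{\lfloor\dft(f)\rfloor}\big((\text{image of }\dft_f)-j\big)$, a union of $\lfloor\dft(f)\rfloor+1$ translates of a set which, by (1), is well-ordered of order type $\omega^d$; a finite union of such is again well-ordered, of order type at most $\omega^d(\lfloor\dft(f)\rfloor+1)<\omega^{d+1}$. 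For the lower bound $\ge\omega^d$ it suffices to exhibit among these defects a subset of order type $\omega^d$; this comes from the $j=0$ case, i.e.\ the tuples for which $\xpdd{f}$ \emph{efficiently} $3$-represents the number in question, and the efficiency results for low-defect polynomials in \cite{paperwo,theory} supply enough of these (all sufficiently large tuples), recovering a tail of the image of $\dft_f$, which still has order type $\omega^d$ by the analysis of (1). Finally, since $f(3^{n_1},\dots,3^{n_d})=\xpdd{f}(3^{n_1},\dots,3^{n_d},3^0)$, the defects arising from $f$ itself form a subset of those arising from $\xpdd{f}$ but still contain such a tail, so the same bounds hold with $f$ in place of $\xpdd{f}$.
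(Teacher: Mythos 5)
First, note that the paper does not prove this proposition from scratch: its ``proof'' is a citation of Propositions~6.2 and 6.3 of \cite{paperwo}. Your proposal attempts a genuine proof, and its skeleton is the right one (it is essentially how \cite{paperwo} organizes the argument): peel off one variable to write $f=Ax_d+B$ with $A$ a low-defect polynomial of degree $d-1$ and $B$ of nonzero constant term, observe that each column $\{\dft_f(a_1,\ldots,a_{d-1},n)\}_{n\ge 0}$ increases to the unattained supremum $p(\vec a)=\cpx{f}-\cpx{A}+\dft_A(\vec a)$, and reduce (2) to (1) via Proposition~\ref{dftbd}. The lower bounds in both parts are correctly handled.

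The genuine gap is exactly where you flag it, and it is not a detail but the entire content of the theorem: showing that the union of the columns is well-ordered with order type \emph{at most} $\omega^d$. The danger is concrete: infinitely many columns with suprema $\ge p$ can intersect a single window $[p^-,p)$ (e.g.\ for $f=(x_1+1)(x_2+1)$ every column has width $3\log_3 2$, which can exceed the gap lengths of $P$ near its limit points), and a priori the ``entry points'' of these columns into the window could form an infinite descending sequence, destroying well-ordering. Ruling this out requires the quantitative decay estimate you allude to, and you do not supply it. Moreover, the specific claim you reach for --- that each fibre has order type bounded \emph{uniformly in $p$} by a finite multiple of $\omega$ --- is both stronger than needed (since $\omega^d$ is additively indecomposable, non-uniform finite multiples suffice when summing over $P$) and not obviously true; what must actually be proved is that each fibre is well-ordered of order type $<\omega^d$ at all. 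A secondary issue: your lower bound in (2) invokes ``efficiency results'' asserting that all sufficiently large tuples give efficient representations; no such statement is available here (Proposition~\ref{dump} requires $\dft(f)<d+1$, which is not assumed, and its exceptional set is not described as the complement of a cofinite box). The standard repair is pigeonhole: since $\dft(\xpdd{f}(3^{n_1},\ldots,3^{n_{d+1}}))=\dft_f(n_1,\ldots,n_d)-j$ with $j\in\{0,\ldots,\lfloor\dft(f)\rfloor\}$, the image of $\dft_f$ is a finite union of the level sets of $j$, so by Proposition~\ref{cutandpaste}(2) some level set already has order type $\omega^d$, and its translate by $-j$ sits inside the defect set in question.
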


\begin{proof}
This is a combination of Propositions 6.2 and 6.3 from \cite{paperwo}.
\end{proof}

The second part of the above proposition follows from the first by means of
theorems about cutting and pasting of well-ordered sets, ultimately due to
Carruth \cite{carruth}.  In particular:

\begin{prop}
\label{cutandpaste}
We have:
\begin{enumerate}
\item If $S$ is a well-ordered set and $S=S_1\cup\ldots\cup S_n$, and $S_1$
through $S_n$ all have order type less than $\omega^k$, then so does $S$.
\item If $S$ is a well-ordered set of order type $\omega^k$ and
$S=S_1\cup\ldots\cup S_n$, then at least one of $S_1$ through $S_n$ also has
order type $\omega^k$.
\end{enumerate}
\end{prop}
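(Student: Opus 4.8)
The plan is to treat this as a standard fact about the additive indecomposability of the ordinals $\omega^k$ (the sort of ordinal arithmetic going back to Carruth \cite{carruth}), while giving a short self-contained argument by a double induction. The one point to be careful about is that part~(1) is genuinely a bit stronger than the naive contrapositive of part~(2): knowing that $S$ does \emph{not} have order type $\omega^k$ is not the same as knowing its order type is $<\omega^k$, so I would not try to derive (1) from (2) directly. Instead I would prove the single strengthened statement
\[ (\star)\qquad\text{if }S=S_1\cup\cdots\cup S_n\text{ is well-ordered and }\mathrm{ot}(S)\ge\omega^k,\text{ then }\mathrm{ot}(S_i)\ge\omega^k\text{ for some }i, \]
and read off both parts from it. Part~(1) is exactly the contrapositive of $(\star)$ (using trichotomy of ordinals, $\neg(\alpha\ge\omega^k)$ is $\alpha<\omega^k$). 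For part~(2): if $\mathrm{ot}(S)=\omega^k$ then $(\star)$ supplies some $S_i$ with $\mathrm{ot}(S_i)\ge\omega^k$, and since $S_i\subseteq S$ we also have $\mathrm{ot}(S_i)\le\mathrm{ot}(S)=\omega^k$, forcing equality. Throughout I would use freely that a subset of a well-ordered set is well-ordered with order type at most that of the ambient set.

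To prove $(\star)$ I would first reduce from general $n$ to $n=2$ by induction on $n$: write $S=(S_1\cup\cdots\cup S_{n-1})\cup S_n$, observe $S_1\cup\cdots\cup S_{n-1}$ is itself a well-ordered union of $n-1$ sets, apply the $n=2$ case, and then the inductive hypothesis on $n$ if the ``$\ge\omega^k$'' block falls on the $S_1\cup\cdots\cup S_{n-1}$ side. Then I would prove the $n=2$ case, $S=A\cup B$, by induction on $k$. The base case $k=0$ is immediate: $\mathrm{ot}(S)\ge\omega^0=1$ means $S\ne\emptyset$, and any element of $S$ lies in $A$ or in $B$. For the step $k\to k+1$: since $\mathrm{ot}(S)\ge\omega^{k+1}$, $S$ has an initial segment order-isomorphic to $\omega^{k+1}$, which decomposes into consecutive intervals $I_0<I_1<I_2<\cdots$ (in the order of $S$), each of order type $\omega^k$. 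Each $I_m=(I_m\cap A)\cup(I_m\cap B)$, so by the inductive hypothesis at level $k$ one of $I_m\cap A$, $I_m\cap B$ has order type $\ge\omega^k$, hence $=\omega^k$ since it is a subset of $I_m$. By pigeonhole there is an infinite set $M$ of indices $m$ for which (say) $\mathrm{ot}(I_m\cap A)=\omega^k$. Then the suborder $\bigcup_{m\in M}(I_m\cap A)$ of $A$ is, because the $I_m$ are disjoint and consecutive, the ordinal sum of $\omega$ many blocks each of type $\omega^k$, so it has order type $\omega^k\cdot\omega=\omega^{k+1}$; hence $\mathrm{ot}(A)\ge\omega^{k+1}$, completing the induction.

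The step I expect to require the most care is not any single estimate but the order-theoretic bookkeeping in the $k\to k+1$ step: pinning down the initial segment of $S$ of type $\omega^{k+1}$, partitioning it into $\omega$-many order-intervals of type $\omega^k$ arranged in increasing order, and verifying that for the chosen indices $m<m'$ in $M$ every element of $I_m\cap A$ precedes every element of $I_{m'}\cap A$ in the ambient order, so that the selected blocks really concatenate to $\omega^k\cdot\omega$. Once that framework is in place, the pigeonhole step and the computation $\omega^k\cdot\omega=\omega^{k+1}$ are routine. I would also record, as an alternative shorter route for a reader content to cite standard ordinal arithmetic, that part~(1) is immediate from the bound $\mathrm{ot}(A\cup B)\le\mathrm{ot}(A)\oplus\mathrm{ot}(B)$ for unions of well-ordered subsets of a linear order (natural sum) together with the observation that, for finite $k$, the natural sum of two ordinals each $<\omega^k$ is again $<\omega^k$ (inspect Cantor normal forms: the leading exponent stays below $k$); part~(2) is then the contrapositive of part~(1), exactly as above.
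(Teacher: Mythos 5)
Your proposal is correct, but it diverges from the paper in an important practical sense: the paper does not prove Proposition~\ref{cutandpaste} at all, it simply cites Carruth \cite{carruth} and De Jongh--Parikh \cite{wpo} for both parts. What you supply is a self-contained replacement for that citation, and it holds up. Your key structural decision is the right one: deriving everything from the single statement $(\star)$ (if $\mathrm{ot}(S)\ge\omega^k$ then some $\mathrm{ot}(S_i)\ge\omega^k$) cleanly handles the asymmetry between the two parts --- part (1) really is the contrapositive of $(\star)$, while part (2) needs the extra observation that $S_i\subseteq S$ caps $\mathrm{ot}(S_i)$ at $\omega^k$, and you make exactly that observation. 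The double induction is sound: the reduction from $n$ sets to $2$ is routine; the base case $k=0$ is trivial; and in the step $k\to k+1$ the decomposition of an initial segment of type $\omega^{k+1}$ into $\omega$ consecutive blocks of type $\omega^k$, the pigeonhole selection of infinitely many blocks landing (say) in $A$, and the computation $\omega^k\cdot\omega=\omega^{k+1}$ for the concatenation of the selected blocks are all correct; the fact that distinct blocks are separated in the ambient order is what makes the selected pieces genuinely concatenate, and you flag this explicitly. Your alternative route via the natural-sum bound $\mathrm{ot}(A\cup B)\le\mathrm{ot}(A)\oplus\mathrm{ot}(B)$ is essentially Carruth's own argument, i.e., the one the paper implicitly invokes, and the closure of $\{\alpha:\alpha<\omega^k\}$ under natural sum (read off from Cantor normal forms) is the standard way to finish; this in fact works for arbitrary ordinal exponents, not just finite $k$, though finite $k$ is all the paper needs. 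The only nitpick is the final clause ``part (2) is then the contrapositive of part (1)'': taken literally that gives only $(\star)$, not part (2); you need to append the subset bound again, as you did earlier in the writeup.
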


\begin{proof}
One may see \cite{carruth} or \cite{wpo} for proofs of these.
\end{proof}

We will need in particular the following variant:

\begin{prop}
\label{interleave}
Suppose $\alpha$ is an ordinal and $S$ is a well-ordered set which can be
written as a finite union $S_1 \cup \ldots \cup S_k$ such that:
\begin{enumerate}
\item The $S_i$ all have order types at most $\omega^\alpha$.
\item If a set $S_i$ has order type $\omega^\alpha$, it is cofinal in $S$.
\end{enumerate}
Then the order type of $S$ is at most $\omega^\alpha$.  In particular, if at
least one of the $S_i$ has order type $\omega^\alpha$, then $S$ has order type
$\omega^\alpha$.
\end{prop}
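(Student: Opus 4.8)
The plan is to argue by contradiction, using the additive indecomposability of $\omega^\alpha$ --- which is exactly the content of Proposition~\ref{cutandpaste}(2) --- applied to a suitable initial segment of $S$. First I would suppose that the order type of $S$ is strictly greater than $\omega^\alpha$. Then $S$ has order type at least $\omega^\alpha+1$, so there is an element $x\in S$ for which the initial segment $S':=\{y\in S:y<x\}$ has order type exactly $\omega^\alpha$. Intersecting the given decomposition with $S'$ writes $S'$ as the finite union $\bigcup_{i=1}^{k}(S_i\cap S')$, and since $S'$ is downward closed in $S$, each $S_i\cap S'$ is an initial segment of $S_i$; hence by hypothesis (1), $\mathrm{ot}(S_i\cap S')\le\mathrm{ot}(S_i)\le\omega^\alpha$ (writing $\mathrm{ot}$ for order type).

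Next I would apply Proposition~\ref{cutandpaste}(2) to $S'$, which has order type $\omega^\alpha$; this produces an index $j$ with $\mathrm{ot}(S_j\cap S')=\omega^\alpha$. Because $S_j\cap S'$ is an initial segment of $S_j$ and, using hypothesis (1) again, has the same order type as $S_j$ itself, it must equal $S_j$; in other words every element of $S_j$ lies below $x$. Consequently no element of $S_j$ is $\ge x$, so $S_j$ is not cofinal in $S$ --- contradicting hypothesis (2), since $\mathrm{ot}(S_j)=\omega^\alpha$. This contradiction shows the order type of $S$ is at most $\omega^\alpha$. For the last sentence of the statement, I would just recall that a subset of a well-ordered set has order type no larger than the whole set, so if some $S_i$ already has order type $\omega^\alpha$ then $\mathrm{ot}(S)$ is both $\ge\omega^\alpha$ and $\le\omega^\alpha$, hence equals $\omega^\alpha$.

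The one point that needs care is that I am invoking Proposition~\ref{cutandpaste}(2) with the ordinal $\omega^\alpha$ rather than a finite power $\omega^k$; this is genuinely needed here, but it is precisely the form in which the Carruth-type cut-and-paste results are established (they hold for every additively indecomposable ordinal $\omega^\alpha$), so no extra work is required beyond flagging it. The remaining ingredients --- that $\{y<x\}$ is an initial segment of $S$, that intersecting a finite decomposition with an initial segment decomposes it into initial segments of the pieces, and that a well-order is not order-isomorphic to any proper initial segment of itself --- are all standard, so I expect no real obstacle there.
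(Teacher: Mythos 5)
Your argument is correct. The paper itself gives no proof of Proposition~\ref{interleave}; it simply cites Proposition~5.4 of \cite{adcwo}, so there is nothing in the text to compare against step by step. What you supply is a clean, self-contained derivation from Proposition~\ref{cutandpaste}(2): every step checks out --- the existence of an initial segment $S'$ of order type exactly $\omega^\alpha$ when $\mathrm{ot}(S)>\omega^\alpha$, the fact that each $S_i\cap S'$ is a downward-closed subset of $S_i$ of order type at most $\omega^\alpha$, the extraction of an index $j$ with $\mathrm{ot}(S_j\cap S')=\omega^\alpha$, and the observation that a proper initial segment of a well-order has strictly smaller order type, forcing $S_j\subseteq S'$ and hence non-cofinality, contradicting hypothesis (2). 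The one caveat you flag yourself is the real one: Proposition~\ref{cutandpaste} is stated in the paper for $\omega^k$, whereas you need it for $\omega^\alpha$ with $\alpha$ an arbitrary ordinal. This is harmless --- Carruth's theorem bounds the order type of a finite union by the natural (Hessenberg) sum of the order types of the pieces, and $\omega^\alpha$ is indecomposable under natural sums for every ordinal $\alpha$, so the lemma holds in the generality you use it; moreover, in all of the paper's applications $\alpha$ is a natural number anyway. So your proof stands as written.
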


\begin{proof}
A proof of this can be found in \cite{adcwo} where it is
Proposition 5.4.
\end{proof}

As was noted above, we have $\dft(f(3^{n_1},\ldots,3^{n_d})\le
\dft_f(n_1,\ldots,n_d)$.  Importantly, though, for certain low-defect
polynomials $f$, namely, those with $\dft(f)<\deg f+1$, we can show that
equality holds for ``most'' choices of $(n_1,\ldots,n_d)$ in a certain sense.

Specifically:

\begin{prop}
\label{dump}
Let $f$ be a low-defect polynomial of degree $d$ with $\dft(f)<d+1$.  Define its
``exceptional set'' to be
\[
S:=\{(n_1,\ldots,n_d): \cpx{f(3^{n_1},\ldots,3^{n_d})}_\st<\cpx{f}+3(n_1+\ldots+n_d)\}
\]
Then the set $\{\dft(f(3^{n_1},\ldots,3^{n_d})):(n_1,\ldots,n_d)\in S\}$ has
order type less than $\omega^d$, and therefore so does the set
$\{\dft(\xpdd{f}(3^{n_1},\ldots,3^{n_{d+1}})):(n_1,\ldots,n_d)\in S\}$.
In particular,
for $a\not\equiv \cpx{f}\pmod{3}$, the set
\[ \{\dft(\xpdd{f}(3^{n_1},\ldots,3^{n_{d+1}})):(n_1,\ldots,n_{d+1})\in
\mathbb{Z}^{d+1}_{\ge 0}\} \cap \mathscr{D}^a \] has order type less than
$\omega^d$.
Meanwhile, the set
\[\{\dft(f(3^{n_1},\ldots,3^{n_d})):(n_1,\ldots,n_d)\notin
S\}\] has order type at least $\omega^d$, and thus so does the set
\[ \{\dft(f(3^{n_1},\ldots,3^{n_d})):(n_1,\ldots,n_d)\in \mathbb{Z}^d_{\ge
0}\} \cap \mathscr{D}_\st^{\cpx{f}}; \]
moreover, the supremum of this latter set is equal to $\dft(f)$.
\end{prop}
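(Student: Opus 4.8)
The plan is to prove the four assertions in sequence, obtaining each from the previous plus the structural propositions already quoted. First I would handle the ``exceptional set'' claim: that $\{\dft(f(3^{n_1},\ldots,3^{n_d})):(n_1,\ldots,n_d)\in S\}$ has order type $<\omega^d$. The key observation is that if $(n_1,\ldots,n_d)\in S$, then $f(3^{n_1},\ldots,3^{n_d})$ is \emph{unstable}, so by Theorem~\ref{oldprops}(2) its defect strictly exceeds its stable defect, hence $\dft(f(3^{n_1},\ldots,3^{n_d}))\ge\dft_\st(f(3^{n_1},\ldots,3^{n_d}))+1$; combined with $\dft_f(n_1,\ldots,n_d)<\dft(f)<d+1$ (Proposition~\ref{dftbd}(2),(3)) and the fact that $\dft(f(3^{\cdot}))\le\dft_f(\cdot)$ with integer difference (Proposition~\ref{dftbd}(1)), each such defect is actually $<d$. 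But more precisely, instability forces the value $3\cdot f(3^{n_1},\ldots,3^{n_d})$ to still be 3-represented by $f$ (one may increase one exponent by $1$ absorbing a factor of $3$ suitably, or use that $f\otimes 3$-type manipulations stay inside the polynomial's image) in a way that \emph{drops the defect}; the cleanest route is to note that membership in $S$ means the defect lies at least $1$ below $\dft(f)$, hence in $[0,d)$, and then invoke Proposition~\ref{oldwo}(1): the image of $\dft_f$ has order type exactly $\omega^d$, and the portion of it lying in $[0,d)$ — since $\dft(f)<d+1$ and the image is cofinal at $\dft(f)$ — must have order type $<\omega^d$ (the top $\omega^d$-block sits near $\dft(f)$). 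Then the set of actual defects $\dft(f(3^{\cdot}))$ over $S$ is a subset of an ``image of an order-decreasing refinement'' of that truncated image, so by Proposition~\ref{cutandpaste}(1) it too has order type $<\omega^d$; passing to the augmented version only adds a finite multiplicity (bounded by $\lfloor\dft(f)\rfloor+1$ via the integer-difference clause of Proposition~\ref{dftbd}(1)), which by Proposition~\ref{cutandpaste}(1) again keeps it $<\omega^d$. The main obstacle here is making rigorous the claim ``the part of $\mathrm{im}(\dft_f)$ below $d$ has order type $<\omega^d$''; I expect to argue it by the strict monotonicity of $\dft_f$ in each variable together with $\sup\dft_f=\dft(f)<d+1$, so that any initial segment bounded away from $\dft(f)$ is a proper initial segment of an $\omega^d$-ordered set and hence has smaller order type.

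Second, for the statement about $a\not\equiv\cpx{f}\pmod 3$: every $N$ $3$-represented by $\xpdd f$ has $\dft(N)\le\dft(f)$, and if additionally $\cpx N\equiv a\not\equiv\cpx f\pmod 3$, then $N$ is unstable — because a stable $N$ represented efficiently by $\xpdd f$ would have $\cpx N\equiv\cpx f\pmod 3$, and in general $\cpx N\equiv\cpx f\pmod 3$ whenever $\dft(N)=\dft_\st(N)$ (Theorem~\ref{oldprops}(7) applied to the stabilization). Hence the exponent tuple realizing $N$ lies in $S$, and we are reduced to the first part; the efficiency/inefficiency bookkeeping is handled by Proposition~\ref{basicub} and the definition of efficient $3$-representation.

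Third and fourth, the complementary statements: $\{\dft(f(3^{n_1},\ldots,3^{n_d})):(n_1,\ldots,n_d)\notin S\}$ has order type $\ge\omega^d$, and this set (suitably) equals $\{\dft_\st\}$ restricted to $\mathscr{D}_\st^{\cpx f}$. Here the point is that $(n_1,\ldots,n_d)\notin S$ means $\cpx{f(3^{n_1},\ldots,3^{n_d})}_\st=\cpx f+3(n_1+\cdots+n_d)$, i.e.\ $\dft_\st(f(3^{\cdot}))=\dft_f(n_1,\ldots,n_d)$ exactly; so the defects arising are precisely the values of $\dft_f$ off the exceptional set, and these are the stable defects with complexity $\equiv\cpx f$. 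By Proposition~\ref{oldwo}(1) the full image of $\dft_f$ has order type $\omega^d$, and by the first part the exceptional values form a subset of order type $<\omega^d$; by Proposition~\ref{cutandpaste}(2) the complement therefore has order type $\omega^d$ (hence $\ge\omega^d$). For the supremum: $\sup\dft_f=\dft(f)$ by Proposition~\ref{dftbd}(3), and since the removed part has order type $<\omega^d$ it cannot be cofinal in an $\omega^d$-ordered set, so the remaining values still have supremum $\dft(f)$. I would close by remarking that the identification with $\{\dft(f(3^{\cdot}))\}\cap\mathscr{D}_\st^{\cpx f}$ is immediate once one checks that $\dft(f(3^{\cdot}))=\dft_\st(f(3^{\cdot}))$ precisely for tuples outside $S$, which is the definition of $S$ unwound through Proposition~\ref{stoldprops}(4),(5). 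The main obstacle across the whole argument is the first part — controlling the order type of the truncated image of $\dft_f$ — and everything else is assembly via the cut-and-paste lemmas.
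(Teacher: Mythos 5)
There is a genuine gap, and it sits exactly at the load-bearing point. The paper does not prove the core claim --- that $\{\dft(f(3^{n_1},\ldots,3^{n_d})):(n_1,\ldots,n_d)\in S\}$ has order type less than $\omega^d$, and that the complementary image has order type at least $\omega^d$ --- from scratch at all; it imports this from Proposition~7.2 of \cite{paperwo}, where it is established by a structural analysis of the exceptional set (covering $S$ by substitution images of lower-degree low-defect polynomials). Your attempt to prove it directly starts from a false premise: membership in $S$ does \emph{not} mean that $N=f(3^{n_1},\ldots,3^{n_d})$ is unstable. The condition $\cpx{N}_\st<\cpx{f}+3(n_1+\cdots+n_d)$ holds when \emph{either} of the inequalities $\cpx{N}_\st\le\cpx{N}\le\cpx{f}+3(n_1+\cdots+n_d)$ is strict; in particular a perfectly stable $N$ whose representation by $f$ merely fails to be efficient lies in $S$. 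Consequently your next step --- that every defect arising from $S$ sits at least $1$ below $\dft(f)$ and hence in $[0,d)$ --- also fails: for an unstable $N$ with an efficient representation one has $\dft(N)=\dft_f(n_1,\ldots,n_d)$, which can be arbitrarily close to $\dft(f)$. And even granting the confinement to $[0,d)$, the claim that $\mathrm{im}(\dft_f)\cap[0,d)$ has order type less than $\omega^d$ breaks down in the admissible boundary case $\dft(f)=d$ (e.g.\ leading coefficient a power of $3$), where that intersection is all of $\mathrm{im}(\dft_f)$. The deeper obstruction is that no argument of this elementary kind can work: a subset of a set of order type $\omega^d$ can itself have order type $\omega^d$, so bounding the order type of $\dft_f(S)$ requires actual structural information about which tuples lie in $S$, not just about where their $\dft_f$-values fall.

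The remaining pieces of your write-up --- passing to $\xpdd{f}$ by covering with finitely many integer translates and applying Proposition~\ref{cutandpaste}, reducing the $a\not\equiv\cpx{f}\pmod{3}$ case to membership in $S$ (the correct route is that $\cpx{\xpdd{f}(3^{n_1},\ldots,3^{n_{d+1}})}\not\equiv\cpx{f}\pmod 3$ forces the representation to be inefficient, hence $\cpx{f(3^{n_1},\ldots,3^{n_d})}_\st<\cpx{f}+3(n_1+\cdots+n_d)$; instability is neither needed nor implied), and the supremum statement via cofinality of $\dft_f(\mathbb{N}^d)\setminus\dft_f(S)$ --- match the paper's actual derivations and are fine. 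But they are all conditional on the first claim, which your proposal does not establish. To repair it you would need to either cite the external result, as the paper does, or reproduce the argument that the exceptional tuples are captured by finitely many lower-order families.
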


\begin{proof}
Most of this is direct from Proposition~7.2 from \cite{paperwo}; the only parts
not covered in the statement there there are the statement about
$\{\dft(\xpdd{f}(3^{n_1},\ldots,3^{n_{d+1}})):(n_1,\ldots,n_d)\in S\}$, the
statement regarding $a\not\equiv \cpx{f}\pmod{3}$, and the final statement.

The first of these follows directly from the first part, because
\[\dft(\xpdd{f}(3^{n_1},\ldots,3^{n_{d+1}}))\le\dft(f(3^{n_1},\ldots,3^{n_d}))\]
with the difference being an integer, and that integer can certainly be no more
than $\dft(f(3^{n_1},\ldots,3^{n_d}))\le\dft(f)$.  Thus the set
\[\{\dft(\xpdd{f}(3^{n_1},\ldots,3^{n_{d+1}})):(n_1,\ldots,n_{d+1})\in
\mathbb{Z}^{d+1}_{\ge 0}\} \cap \mathscr{D}^a\] can be covered by finitely many
translates of $\{\dft(f(3^{n_1},\ldots,3^{n_d})):(n_1,\ldots,n_d)\in S\}$ and so
by Proposition~\ref{cutandpaste} has order type less than $\omega^d$.

For the statement about
\[ \{\dft(\xpdd{f}(3^{n_1},\ldots,3^{n_{d+1}})):(n_1,\ldots,n_{d+1})\in
\mathbb{Z}^{d+1}_{\ge 0}\} \cap \mathscr{D}^a \]
with $a\not\equiv C\pmod{3}$, if
$\cpx{\xpdd{f}(3^{n_1},\ldots,3^{n_d})}\equiv a\not\equiv \cpx{f} \pmod{3}$,
then in particular this means that
\[ \cpx{\xpdd{f}(3^{n_1},\ldots,3^{n_{d+1}})} \ne \cpx{f}+3(n_1+\ldots+n_{d+1})
\]
which means that
\[ \cpx{\xpdd{f}(3^{n_1},\ldots,3^{n_{d+1}})} < \cpx{f}+3(n_1+\ldots+n_{d+1}) \]
and therefore that
\[ \cpx{f(3^{n_1},\ldots,3^{n_d})}_\st < \cpx{f}+3(n_1+\ldots+n_d), \]
i.e., that $(n_1,\ldots,n_d)\in S$.  Applying what was proved in the previous
paragraph now proves the statement.

As for the final statement, the set
$\{\dft(f(3^{n_1},\ldots,3^{n_d})):(n_1,\ldots,n_d)\in \mathbb{Z}^d_{\ge
0}\} \cap \mathscr{D}_\st^{\cpx{f}}$ contains $\dft_f(\mathbb{N}^d\setminus S)$
(one may see the proof in \cite{paperwo}) which in turn contains
$\dft_f(\mathbb{N}^d)\setminus\dft_f(S)$.  Since the image of
$\dft_f$ has order type $\omega^d$ while $\dft_f(S)$ has order type less
than $\omega^d$ -- similarly to above, this follows by the initial statement and
Proposition~\ref{cutandpaste} -- it follows that
$\dft_f(\mathbb{N}^d)\setminus\dft_f(S)$ has order type $\omega^d$ and
thus is cofinal in the image of $\dft_f$, and thus has supremum $\dft(f)$;
and the same is true of the larger set
$\{\dft(f(3^{n_1},\ldots,3^{n_d})):(n_1,\ldots,n_d)\in \mathbb{Z}^d_{\ge 0}\}
\cap \mathscr{D}_\st^{\cpx{f}}$ which is also bounded above by $\dft(f)$.
\end{proof}

Finally, one more property of low-defect polynomials we will need is the
following:

\begin{prop}
\label{polycpxbd}
Let $f$ be a low-defect polynomial, and suppose that $a$ is the leading
coefficient of $f$.  Then $\cpx{f}\ge \cpx{a} + \deg f$.  In particular,
$\dft(f) \ge \dft(a) + \deg f$.
\end{prop}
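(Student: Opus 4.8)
The plan is to prove, by structural induction on the recursive definition of $\mathscr{P}$ (Definition~\ref{polydef}), the slightly stronger statement that \emph{every} low-defect pair $(f,C)\in\mathscr{P}$ — not merely the one with $C=\cpx f$ — satisfies $C\ge\cpx a+\deg f$, where $a$ is the leading coefficient of $f$. Since $\cpx f$ is by definition the least $C$ for which $(f,C)\in\mathscr{P}$, the proposition follows immediately. Throughout, the invariant being tracked is the leading coefficient together with the degree, and the two facts about integer complexity that I will use are submultiplicativity, $\cpx{mn}\le\cpx m+\cpx n$, and the trivial bound $\cpx c\ge 1$ for every natural number $c$.

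For the base clause, $(f,C)=(k,C)$ with $k$ a constant polynomial and $C\ge\cpx k$: here $\deg f=0$, and the ``leading coefficient'' (the coefficient of the empty product of variables, which by Proposition~\ref{polystruct} is also the nonzero constant term) is $k$ itself, so the claim $C\ge\cpx k=\cpx a+\deg f$ is exactly the hypothesis. For the tensor clause, $(f,C)=(f_1\otimes f_2,C_1+C_2)$: degree is additive, $\deg f=\deg f_1+\deg f_2$, and inspecting the monomial $x_1\cdots x_{d_1+d_2}$ in the product shows the leading coefficient of $f$ is the product $a_1a_2$ of the leading coefficients of the factors. Applying the inductive hypothesis to $(f_1,C_1)$ and $(f_2,C_2)$ and adding gives $C_1+C_2\ge(\cpx{a_1}+\deg f_1)+(\cpx{a_2}+\deg f_2)$, and then submultiplicativity, $\cpx{a_1a_2}\le\cpx{a_1}+\cpx{a_2}$, yields $C\ge\cpx{a_1a_2}+\deg f=\cpx a+\deg f$. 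For the third clause, $(f,C)=(g\otimes x_1+c,\,C'+D)$ with $(g,C')\in\mathscr{P}$ and $D\ge\cpx c$: multiplying in a fresh variable and adding a constant does not disturb the top monomial, so the leading coefficient of $f$ equals that of $g$, call it $a$, while $\deg f=\deg g+1$. The inductive hypothesis gives $C'\ge\cpx a+\deg g$, and since $\cpx c\ge 1$ we get $D\ge 1$, so $C=C'+D\ge\cpx a+\deg g+1=\cpx a+\deg f$.

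The ``in particular'' clause is then purely formal: by the definitions of $\dft(f)$ and of the ordinary defect, $\dft(f)=\cpx f-3\log_3 a$ and $\dft(a)=\cpx a-3\log_3 a$, hence $\dft(f)-\dft(a)=\cpx f-\cpx a\ge\deg f$.

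I do not anticipate a genuine obstacle here; the only points requiring a little care are the bookkeeping ones — verifying in each of the three clauses exactly how the leading coefficient and the degree transform (multiplicativity of the leading coefficient under $\otimes$, and its invariance under $g\mapsto g\otimes x_1+c$, both read off from Proposition~\ref{polystruct} and the explicit formulas in Definition~\ref{polydef}) — and recognizing at the outset that one must induct on the \emph{derivation} of a pair $(f,C)$ rather than on $f$ alone, so that the desired inequality is available for every admissible base complexity and not just the minimal one.
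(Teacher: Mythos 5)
Your proof is correct. The paper gives no argument of its own here --- it simply cites Proposition~3.24 of \cite{theory} --- so there is nothing internal to compare against; your structural induction on the derivation of the pair $(f,C)$, strengthened to cover every admissible base complexity $C$ rather than only $C=\cpx{f}$, is the natural argument and is essentially the standard one. The bookkeeping is all sound: the leading coefficient is the constant itself in the base clause, is multiplicative under $\otimes$ (with $\cpx{a_1a_2}\le\cpx{a_1}+\cpx{a_2}$ doing the work), and is unchanged under $g\mapsto g\otimes x_1+c$ while the degree increases by one, which is paid for by $D\ge\cpx{c}\ge 1$ (note $c\ne 0$ since low-defect polynomials have nonzero constant term). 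The passage to $\dft(f)\ge\dft(a)+\deg f$ from the definitions $\dft(f)=\cpx{f}-3\log_3 a$ and $\dft(a)=\cpx{a}-3\log_3 a$ is immediate, as you say.
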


\begin{proof}
This is Proposition~3.24 from \cite{theory}.
\end{proof}

With this, we have the basic properties of low-defect polynomials.

\begin{rem}
Note that one reason nothing is lost here by discarding the formalism of
low-defect pairs is that the low-defect pairs $(f,C)$ we will (implicitly)
concern ourselves with in this paper are ones that satisfy $C-3\log_3 m<\deg
f+1$, where $m$ is the leading coefficient of $f$.  However, by
Proposition~\ref{polycpxbd},
\[ \deg f \le \dft(f) \le C-3\log_3 m < \deg f + 1, \]
thus $C-\cpx{f}=(C-3\log_3 m)-\dft(f)<1$ and so $C=\cpx{f}$.  So if we were to
use low-defect pairs, we would only be using pairs where $C=\cpx{f}$, so we lose
nothing by making this assumption.
\end{rem}

\subsection{Good coverings}
\label{buildsec}

We need one more set of definitions before we can state the theorem that will be
used as the basis of the proof of the main theorem.  We define:

\begin{defn}
A natural number $n$ is called a \emph{leader} if it is the smallest number with
a given defect.  By part (6) of Theorem~\ref{oldprops}, this is equivalent to
saying that either $3\nmid n$, or, if $3\mid n$, then
$\dft(n)<\dft(\frac{n}{3})$, i.e., $\cpx{n}<3+\cpx{\frac{n}{3}}$.
\end{defn}

Let us also define:

\begin{defn}
For any real $s\ge0$, define the set of {\em $s$-defect numbers} $A_s$ to be 
\[A_s := \{n\in\mathbb{N}:\dft(n)<s\}.\]
Define the set of {\em $s$-defect leaders} $B_s$ to be 
\[
B_r:= \{n \in A_s :~~n~~\mbox{is a leader}\}.
\]
\end{defn}

These sets are related by the following proposition from \cite{paperwo}:

\begin{prop}
\label{arbr}
For every $n\in A_s$, there exists a unique $m\in B_s$ and $k\ge 0$ such that
$n=3^k m$ and $\dft(n)=\dft(m)$; then $\cpx{n}=\cpx{m}+3k$.
\end{prop}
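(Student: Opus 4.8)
The plan is to use the structural description of the set $\{m : \dft(m) = \alpha\}$ given in part~(6) of Theorem~\ref{oldprops} together with the monotonicity and integrality statements in part~(2). First I would fix $n \in A_s$ and set $\alpha = \dft(n)$, so that $\alpha < s$. By part~(6) of Theorem~\ref{oldprops}, the set $\{m : \dft(m) = \alpha\}$ is either $\{m_0 3^k : 0 \le k \le L\}$ or $\{m_0 3^k : 0 \le k\}$ for some $m_0$; in either case it has a unique smallest element, which is $m_0$, and $3 \nmid m_0$ or else $m_0$ could be divided by $3$ while staying in the set. This $m_0$ is a leader by definition, and since $\dft(m_0) = \alpha < s$ we have $m_0 \in B_s$. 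Writing $n = 3^k m_0$ for the appropriate $k \ge 0$ gives the existence of the desired pair $(m, k) = (m_0, k)$ with $\dft(n) = \dft(m)$.

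Next I would verify $\cpx{n} = \cpx{m} + 3k$. This is immediate from part~(2) of Theorem~\ref{oldprops}: since $\dft(3^k m_0) = \dft(m_0)$, equality holds in $\dft(3^k m_0) \le \dft(m_0)$, and the ``if and only if'' clause of part~(2) then gives $\cpx{3^k m_0} = 3k + \cpx{m_0}$.

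For uniqueness, suppose $n = 3^k m = 3^{k'} m'$ with $m, m' \in B_s$ (in particular both leaders) and $\dft(n) = \dft(m) = \dft(m')$. Then $m$ and $m'$ both lie in $\{q : \dft(q) = \alpha\}$, and since they are both leaders — hence both minimal in their respective (equal) ``defect towers'' — part~(6) forces $m = m'$, whence $k = k'$ since $3^k m = 3^{k'} m$ and $m$ is a positive integer. So the pair $(m, k)$ is unique.

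I expect the only genuine content here is unpacking part~(6) of Theorem~\ref{oldprops} correctly — in particular recognizing that a leader is precisely the bottom of its defect tower, so that ``leader with defect $\alpha$'' determines $m$ uniquely — after which everything reduces to the integrality and equality conditions already recorded in parts~(2) and~(6). There is no real obstacle; this proposition is essentially a repackaging of Theorem~\ref{oldprops}(6) restricted to the sublevel set $\dft < s$.
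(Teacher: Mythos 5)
Your argument is correct, and since the paper simply quotes this proposition from \cite{paperwo} without reproving it, there is no in-paper proof to compare against; your derivation from Theorem~\ref{oldprops} (parts (2), (4), and (6)) is exactly the natural one. One small correction: your parenthetical claim that $3\nmid m_0$ ``or else $m_0$ could be divided by $3$ while staying in the set'' is false --- leaders can be divisible by $3$ (e.g.\ $321=3\cdot 107$ is a leader because $\dft(321)<\dft(107)$), and dividing $m_0$ by $3$ generally changes the defect, so $m_0/3$ need not lie in $\{m:\dft(m)=\alpha\}$. Fortunately this remark is never used: being the least element of $\{m:\dft(m)=\alpha\}$ is already the paper's definition of ``leader,'' which is all your existence and uniqueness arguments require.
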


Because of this, if we want to describe the set $A_r$, it suffices to describe
the set $B_r$.  Now we can define:

\begin{defn}
\label{goodcover}
For a real number $s\ge0$, a finite set $\sS$ of low-defect polynomials will be
called a \emph{good covering} for $B_s$ if every $n\in B_r$ can be efficiently
$3$-represented by some polynomial in $\sS$ (and hence every $n\in Asr$ can be
efficiently represented by some $\xpdd{f}$ with $f\in \sS$) and if for every
$f\in\sS$, $\dft(f)\le s$, with this being strict if $\deg f=0$.
\end{defn}

This allows us to state the main theorem from \cite{theory}:
\begin{thm}
\label{theory}
For any real number $s\ge 0$, there exists a good covering of $B_s$.
\end{thm}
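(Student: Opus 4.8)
The plan is to obtain Theorem~\ref{theory} by sharpening the covering result of \cite{paperwo} recalled in Section~\ref{intropoly}, which supplies, for each $s\ge0$, what one may call a \emph{weak} good covering of $B_s$: a finite set $\sT$ of low-defect polynomials meeting all the conditions of Definition~\ref{goodcover} except that $\dft(f)\le s$ is weakened to $\deg f\le s$. The point to add is that each such $f$ need be used only sparingly. Indeed, efficiency turns the inequalities of Proposition~\ref{dftbd} into equalities: if $f$ of degree $d$ efficiently $3$-represents $n$ via $(n_1,\ldots,n_d)$ then $\cpx n=\cpx f+3(n_1+\cdots+n_d)$, so $\dft(n)=\dft_f(n_1,\ldots,n_d)$. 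Hence every tuple through which $f$ is called upon to represent an element of $B_s$ lies in the region $G_f:=\{(n_1,\ldots,n_d)\in\Z_{\ge0}^d:\dft_f(n_1,\ldots,n_d)<s\}$, which by the strict monotonicity of $\dft_f$ (Proposition~\ref{dftbd}(3)) is downward closed, and which is a \emph{proper} subset of $\Z_{\ge0}^d$ precisely when $\dft(f)>s$, since $\dft(f)=\sup\dft_f$. So one keeps the members of $\sT$ already having $\dft(f)\le s$, and it remains to replace each $f$ with $\dft(f)>s$, over $G_f$, by finitely many low-defect polynomials of defect $\le s$ that efficiently $3$-represent the same numbers there.

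For the replacement, decompose $G_f$ by Dickson's lemma into finitely many ``generalized boxes'' $\prod_{i=1}^d I_i$, each $I_i$ either a finite segment $[0,b_i]$ or all of $\Z_{\ge0}$. A box with all sides finite is a finite set, and substituting powers of $3$ for all $d$ variables turns $f$ into finitely many constant polynomials, each of defect at most its value of $\dft_f$, hence $<s$. For a box with some unbounded sides, substitute powers of $3$ only for the finitely-many bounded variables; granting --- see the obstacle below --- that this produces a low-defect polynomial $g$ of base complexity at most $\cpx f$ plus $3$ times the substituted exponents, one gets $\dft_g\le\dft_f$ wherever both are defined, so $\dft(g)=\sup\dft_g$ is at most the supremum of $\dft_f$ over those tuples whose substituted coordinates agree with $g$'s. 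Because a generalized box is a product, such tuples are confined to the box itself, so all have $\dft_f<s$; hence $\dft(g)\le s$, and efficiency of the new representations follows by tracking complexities through Proposition~\ref{basicub}. Taking $\sS$ to be the union of all these replacements over all $f\in\sT$ and all boxes then gives a finite good covering of $B_s$, the constant polynomials in it being automatically of defect $<s$.

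The main obstacle is the parenthetical step above. Substituting a power of $3$ for a variable of a low-defect polynomial need not naively give a low-defect polynomial of controlled base complexity: if the variable was introduced near the root of the construction in Definition~\ref{polydef} --- at the outermost $\otimes x_1+c$ stage --- specializing it forces a rewrite that can inflate the base complexity by a multiple of the substituted exponent, pushing the defect past $s$. One resolution is to respect the construction tree, substituting for the ``leaf'' variables first and handling a bounded root variable separately (the relevant numbers being described through the lower-degree polynomial of the preceding stage together with its augmentation); another, perhaps cleaner, is to bypass \cite{paperwo} and rerun its inductive construction while tracking $\dft(f)$ in place of $\deg f$, since when a leader is split by its optimal expression its polynomial is assembled as an $\otimes$-product of pieces, $\dft$ is subadditive under $\otimes$, and Proposition~\ref{polycpxbd}'s inequality $\dft(f)\ge\dft(a)+\deg f$ both confines the leading coefficient $a$ of any bounded-defect piece to a finite set, keeping the construction finite, and forces the degree down, so the induction terminates. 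In either approach the delicate part is the bookkeeping that makes the final bound come out as $\dft(f)\le s$ and not merely $\le s+\varepsilon$.
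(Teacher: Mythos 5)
First, a point of reference: the paper does not actually prove this theorem --- it is the main theorem of the separate paper \cite{theory}, and the ``proof'' here is just a citation together with a remark about discarding pairs $(f,C)$ with $C>\cpx{f}$. So your proposal is being measured against a full research paper rather than a short argument. Your overall strategy --- start from the weaker covering of \cite{paperwo} with only $\deg f\le s$, restrict each $f$ to the downward-closed region $G_f$ where $\dft_f<s$ (which is where all efficient representations of elements of $B_s$ must live), decompose $G_f$ into finitely many generalized boxes, and specialize the bounded variables at powers of $3$ --- is in fact close in spirit to the ``truncation'' construction that \cite{theory} actually carries out, and the reductions you do make (efficiency forces $\dft(n)=\dft_f(n_1,\ldots,n_d)$; $G_f$ is downward closed by strict monotonicity of $\dft_f$; the resulting constants and specializations have defect bounded by the supremum of $\dft_f$ over the box, hence $\le s$) are all correct.

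The genuine gap is exactly the one you flag yourself and then leave unresolved: that specializing a variable of a low-defect polynomial at a power of $3$ yields a low-defect polynomial $g$ with $\cpx{g}\le\cpx{f}+3n_i$. This is not a parenthetical technicality; it is where essentially all of the work of \cite{theory} lives. A low-defect polynomial is built by the recursion of Definition~\ref{polydef}, and a variable introduced at an outer $f\otimes x_1+c$ stage cannot be specialized without restructuring the whole expression; the naive rewrite gives a polynomial whose base complexity you cannot bound by $\cpx{f}+3n_i$, which destroys both the defect bound $\dft(g)\le s$ and the efficiency of the surviving representations. Your two proposed fixes (substitute only at leaf variables and handle root variables ``separately''; or rerun the induction of \cite{paperwo} tracking $\dft$ instead of $\deg$) are each plausible directions but neither is carried out: the first requires showing that the box decomposition can be made compatible with the nesting tree of $f$ (boxes from Dickson's lemma need not bound the leaf variables rather than the root ones), and the second asserts without argument that the modified induction terminates and produces a finite set. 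Until one of these is actually executed --- with the bookkeeping showing the bound comes out as $\dft(f)\le s$ exactly, as you note --- the proposal is an outline of the right shape rather than a proof.
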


\begin{proof}
This is Theorem~4.9 from \cite{theory} rewritten in terms of
Definition~\ref{goodcover}, and using low-defect polynomials instead of pairs.
(Any low-defect pairs $(f,C)$ with $C>\cpx{f}$ can be filtered out of a good
covering, since such a pair can never efficiently $3$-represent anything.)
\end{proof}

Note that by Proposition~\ref{polycpxbd}, if $f$ is in a good covering of
$B_s$ with leading coefficient $m$, we must have $\dft(m)+\deg f\le s$.

\section{The integer defect}
\label{secdefn}

In this section we state some basic facts about $D(n)$, what it means, and how
it may be computed.

Let us start by giving another interpretation of what $D(n)$ means:
\begin{prop}
\label{Dinterp}
For a natural number $n$,
\[ D(n) = |\{k: n<E(k)\le E(\cpx{n})\}|. \]
\end{prop}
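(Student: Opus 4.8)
The plan is to unwind the definitions and show both sides count the same thing. Recall $D(n) = \cpx{n} - L(n)$, where $L(n)$ is the largest $k$ with $E(k) \le n$. The key structural fact I would use is that $E$ is nondecreasing in $k$ (since $\cpx{E(k)} \le k$ implies $\cpx{E(k)} \le k+1$, so $E(k) \le E(k+1)$), and moreover $E$ is eventually strictly increasing; more precisely, from Theorem~\ref{selfridge} one sees $E(k+1) > E(k)$ for all $k \ge 1$ except possibly at the very start, and in any case $E(k) < E(k+3)$ always. The relevant consequence is: for $k$ in the range between $L(n)$ and $\cpx{n}$, the values $E(k)$ are distinct, so counting indices $k$ is the same as counting values $E(k)$.

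First I would establish that $L(n) < \cpx{n}$, or rather $L(n) \le \cpx{n}$, with the two equal only when $n = E(\cpx{n})$. Indeed $E(\cpx{n}) \ge n$ by definition of $E$ (since $\cpx{n} \le \cpx{n}$), so if $E(\cpx{n}) = n$ then $L(n) \ge \cpx{n}$; conversely $L(n) \le \cpx{n}$ always since $E(L(n)) \le n \le E(\cpx{n})$ and $E$ is nondecreasing forces $L(n) \le \cpx{n}$ whenever $E$ is strictly increasing past that point, with a small check at the bottom of the range. Then I would write the set on the right-hand side as $\{k : L(n) < k \le \cpx{n}\}$: indeed $n < E(k)$ is equivalent to $k > L(n)$ by the very definition of $L(n)$ as the largest $k$ with $E(k) \le n$ (using that $E$ is nondecreasing, so $E(k) \le n$ holds exactly for $k \le L(n)$), and $E(k) \le E(\cpx{n})$ is implied by $k \le \cpx{n}$ while for $k > \cpx{n}$ one has $E(k) > E(\cpx{n}) \ge n$, wait — I need $E(k) \le E(\cpx{n})$ to actually fail for $k > \cpx{n}$, which is where I invoke that $E$ is strictly increasing for arguments $\ge 1$ (handling the $k=1$ edge case separately, and noting $\cpx{n} \ge 1$). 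So the right-hand set equals $\{k : L(n) < k \le \cpx{n}\}$, which has cardinality $\cpx{n} - L(n) = D(n)$.

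The main obstacle I anticipate is purely bookkeeping around the edge cases of Theorem~\ref{selfridge}: the value $E(1) = 1$ breaks the otherwise-clean ``$E$ strictly increasing'' statement, and one must be careful that $E$ is strictly increasing on $k \ge 2$ (which follows from cases (2)--(4): consecutive values are $3^{j}, 2\cdot 3^j, 4\cdot 3^j, 3^{j+1}, \ldots$ in residue order, each strictly larger than the last, once $k \ge 2$) and that $n \ge 1$ forces $\cpx{n} \ge 1$ so we never stray below the valid range. One should also confirm the formula trivially when $n = E(\cpx{n})$ (e.g. $n$ a power of $3$): then $L(n) = \cpx{n}$, the right-hand set is empty, and $D(n) = 0$, consistent. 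So the proof is a short definition-chase, with the only genuine content being monotonicity of $E$ extracted from Theorem~\ref{selfridge}.
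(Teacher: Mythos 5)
Your proposal is correct and is essentially the paper's own argument: both rewrite the set $\{k : n < E(k) \le E(\cpx{n})\}$ as $\{k : L(n) < k \le \cpx{n}\}$ using the (strict) monotonicity of $E$, giving cardinality $\cpx{n} - L(n) = D(n)$. The only difference is that you spend more care on the edge case at $k=1$ than is needed — by Theorem~\ref{selfridge} one has $E(1)=1 < E(2)=2 < E(3)=3 < \cdots$, so $E$ is strictly increasing on all of $k \ge 1$ and no exception arises.
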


That is to say, $D(n)$ measures how far down $n$ is among numbers with
complexity $\cpx{n}$, measured by how many values of $E$ one passes as one
counts downwards towards $n$ from the largest number also having complexity
$\cpx{n}$.

\begin{proof}
By definition, $L(n)$ is the largest $k$ such that $E(k)\le n$.  Since $E(k)$ is
strictly increasing, the number of $k$ such that $n<E(k)\le E(\cpx{n})$ is equal
to the difference $\cpx{n}-L(n)$, i.e., $D(n)$.
\end{proof}

So for instance, one has that $D(n)=0$ if and only if $n$ is of the form $E(k)$
for some $k$, i.e., $n$ is the largest number of its complexity; while $D(n)\le
1$ if and only if $n>E(\cpx{n}-1)$, i.e., $n$ is greater than all numbers of
lower complexity.  Numbers $n$ with $D(n)\le 1$ will be discussed more in
Section~\ref{Erk}.

As for properties of the integer defect, it behaves largely analogously to the
real defect:

\begin{prop}
\label{oldpropsD}
We have:
\begin{enumerate}
\item For all $n$, $D(n)\ge 0$.
\item For all $n>1$, $L(3n)=L(n)+3$.
\item For $n>1$ and $k\ge 0$, one has $D(3^k n)\le D(n)$, with equality if and
only if $\cpx{3^k n}=3k+\cpx{n}$.
\item A number $n>1$ is stable if and only if for any $k\ge 0$, $D(3^k n)=D(n)$.
\end{enumerate}
\end{prop}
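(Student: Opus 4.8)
The plan is to prove the four parts of Proposition~\ref{oldpropsD} in order, leveraging the analogous facts about $\dft$ from Theorem~\ref{oldprops} together with the explicit behavior of $E$ and $L$ from Theorem~\ref{selfridge}. Part~(1) is immediate: by definition $L(n)$ is the largest $k$ with $E(k)\le n$, and since $\cpx{n}$ itself satisfies $E(\cpx{n})\ge n$ is false in general but rather $n\le E(\cpx{n})$ by definition of $E$, we get $L(n)\ge\cpx{n}$ is wrong — instead one notes $E(\cpx n)\ge n$ so $L(n)$, being the largest $k$ with $E(k)\le n$, satisfies... actually the cleanest route is Proposition~\ref{Dinterp}: $D(n)=|\{k:n<E(k)\le E(\cpx n)\}|\ge 0$ trivially, and this also immediately shows $D(n)\ge 0$ without further work.

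For part~(2), the claim $L(3n)=L(n)+3$ for $n>1$: first I would check, using Theorem~\ref{selfridge}, that for $k>1$ one has $E(k+3)=3E(k)$ in each of the three residue classes modulo $3$ (this is just reading off the formulas; the restriction $k>1$, equivalently $n>1$, is exactly what rules out the anomalous case $E(1)=1$). Given this, $E(k)\le n \iff 3E(k)\le 3n \iff E(k+3)\le 3n$, so the set of $k$ with $E(k)\le 3n$ is obtained from the set of $k$ with $E(k)\le n$ by shifting by $3$, at least in the range $k>1$; a small separate check handles the bottom of the range (for $n>1$ we have $3n\ge 6$, so $L(3n)\ge 4>1$, keeping us safely above the anomaly). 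Taking maxima gives $L(3n)=L(n)+3$. The main thing to be careful about is the boundary behavior near $k=1$, but the hypothesis $n>1$ is designed precisely to avoid it.

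Part~(3) then follows by combining part~(2) with Theorem~\ref{oldprops}(2). Iterating part~(2), $L(3^k n)=L(n)+3k$ for $n>1$. Hence
\[ D(3^k n)=\cpx{3^k n}-L(3^k n)=\cpx{3^k n}-L(n)-3k=(\cpx{3^k n}-3k)-L(n). \]
Since $\cpx{3^k n}\le \cpx n + 3k$ always (Theorem~\ref{oldprops}(2), restated in complexity terms), we get $D(3^k n)\le \cpx n - L(n)=D(n)$, with equality if and only if $\cpx{3^k n}=3k+\cpx n$. This is the same equality condition appearing in Theorem~\ref{oldprops}(2), so no separate argument is needed.

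Finally, part~(4): a number $n>1$ is stable iff $\cpx{3^k n}=3k+\cpx n$ for all $k\ge 0$ (definition of stable), which by the equality clause of part~(3) is iff $D(3^k n)=D(n)$ for all $k\ge 0$. Conversely, if $D(3^k n)=D(n)$ for all $k$, then by part~(3) the equality $\cpx{3^k n}=3k+\cpx n$ holds for all $k$, i.e.\ $n$ is stable. The only genuine content is part~(2); parts~(3) and (4) are bookkeeping on top of it, mirroring exactly how Theorem~\ref{oldprops}(3) follows from Theorem~\ref{oldprops}(2). I expect the one place requiring real (if minor) care is verifying $E(k+3)=3E(k)$ uniformly and confirming the $n>1$ hypothesis cleanly excludes the $E(1)=1$ irregularity in both directions of the maximum.
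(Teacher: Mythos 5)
Your proposal is correct and follows essentially the same route as the paper: part (1) from the definition (or equivalently from Proposition~\ref{Dinterp}), part (2) from the identity $E(k+3)=3E(k)$ for $k>1$ with the hypothesis $n>1$ excluding the $E(1)=1$ anomaly, and parts (3) and (4) as bookkeeping consequences exactly mirroring Theorem~\ref{oldprops}. One tiny refinement: for the boundary check you want $L(3n)\ge 5$ (so that $L(3n)-3\ge 2>1$), which holds since $E(5)=6\le 3n$ for $n>1$; your stated bound $L(3n)\ge 4$ is true but not quite the one needed.
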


\begin{proof}
Statement (1) is just the statement that $L(n)\le\cpx{n}$; this follows from the
definition of $L(n)$ as $E(\cpx{n})\ge n$ and so (as $E(k)$ is increasing) one
must have $L(n)\le \cpx{n}$.  And once statement (2) is established, statements
(3) and (4) then follow from that and may be proved in exactly the same way
their analogous statements in Theorem~\ref{oldprops} are proved.  This leaves
just statement (2) to be proved.
Note that, for any $k>1$, $E(k+3)=3E(k)$.  Therefore, for any $k>1$, $E(k+3)\le
3n$ if and only if $E(k)\le n$, and so $L(3n)=L(n)+3$; the only possible
exception to this would be if one had $L(n)=1$, which happens only when $n=1$.
\end{proof}

Note that while the theorem that for any $n$ there is some $k$ such that $3^k n$
is stable was originally proven using the defect $\dft(n)$, it could also just
as well be proven using the integer defect $D(n)$.

We can also of course define a stable variant of $D(n)$:

\begin{defn}
For a positive integer $n$, we define the stable integer defect of $n$, denoted
$D_\st(n)$, to be $D(3^k n)$ for any $k$ such that $3^k n$ is stable.
\end{defn}

Note that Proposition~\ref{oldpropsD} shows that this is well-defined.  We then
have:

\begin{prop}
\label{stoldpropsD}
We have:
\begin{enumerate}
\item $D_\st(n)= \min_{k\ge 0} D(3^k n)$
\item For $n>1$, $D_\st(n)=\cpx{n}_\st-L(n)$
\item $D_\st(n) \le D(n)$, with equality if and only if $n$ is stable or $n=1$
\item For $n>1$, $D(n)-D_\st(n)=\dft(n)-\dft_\st(n)=
\cpx{n}-\cpx{n}_\st$
\end{enumerate}
\end{prop}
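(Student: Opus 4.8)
The plan is to transcribe the proof of Proposition~\ref{stoldprops} into the integer setting, with $L(n)$ taking over the role that $3\log_3 n$ plays there; the one genuine input is the closed formula in part~(2), from which parts~(1), (3), and (4) follow by routine manipulation. I would prove part~(2) first. For $n>1$, iterating part~(2) of Proposition~\ref{oldpropsD} yields $L(3^k n)=L(n)+3k$ for all $k\ge 0$ (the restriction $n>1$ in that statement excludes exactly the exceptional case $L(n)=1$). Choosing $k$ so that $3^k n$ is stable, the definition of stable complexity gives $\cpx{3^k n}=3k+\cpx{n}_\st$, so
\[ D_\st(n)=D(3^k n)=\cpx{3^k n}-L(3^k n)=(3k+\cpx{n}_\st)-(L(n)+3k)=\cpx{n}_\st-L(n); \]
in particular the value is independent of the choice of $k$, which also re-verifies that $D_\st(n)$ is well-defined.

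For part~(1), I would note that part~(3) of Proposition~\ref{oldpropsD}, applied with $3^k n$ in place of $n$ (valid since $n>1$ makes $3^k n>1$), shows $k\mapsto D(3^k n)$ is non-increasing. Fixing $K$ with $3^K n$ stable, part~(4) of Proposition~\ref{oldpropsD} gives $D(3^k n)=D(3^K n)=D_\st(n)$ for all $k\ge K$, while monotonicity gives $D(3^k n)\ge D_\st(n)$ for $k<K$; hence $D_\st(n)=\min_{k\ge 0}D(3^k n)$. The number $n=1$ must be treated separately: $3^k$ is stable for $k\ge 1$, and a direct computation gives $D(1)=\cpx{1}-L(1)=1-1=0$ and $D(3^k)=3k-L(3^k)=3k-3k=0$, so the sequence is identically $0$ and all four claims hold trivially for $n=1$.

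Parts~(3) and (4) should then be quick. Part~(3) is immediate from part~(1): $D_\st(n)=\min_k D(3^k n)\le D(n)$, and for $n>1$, equality forces the non-increasing sequence $D(3^k n)$ to equal $D(n)$ for every $k$, which by part~(4) of Proposition~\ref{oldpropsD} happens exactly when $n$ is stable (and $n=1$ is the listed exception). For part~(4) with $n>1$, I would subtract $D(n)=\cpx{n}-L(n)$ from the part~(2) formula to get $D(n)-D_\st(n)=\cpx{n}-\cpx{n}_\st$, and subtract $\dft_\st(n)=\cpx{n}_\st-3\log_3 n$ (part~(4) of Proposition~\ref{stoldprops}) from $\dft(n)=\cpx{n}-3\log_3 n$ to get $\dft(n)-\dft_\st(n)=\cpx{n}-\cpx{n}_\st$ as well.

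The main obstacle here is organizational rather than mathematical: parts~(2)--(4) of Proposition~\ref{oldpropsD} are stated only for $n>1$, so I must be careful to exclude $n=1$ wherever those are invoked and dispose of it by the explicit computation above. Apart from that care, the argument is simply the integer-defect shadow of the stable-defect facts already established in \cite{paperwo,paper1}, with $L$ in place of $3\log_3$.
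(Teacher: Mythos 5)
Your proposal is correct and follows essentially the same route as the paper, which simply remarks that parts (1)--(3) are proved exactly as their analogues in Proposition~\ref{stoldprops} (via Proposition~\ref{oldpropsD}) and that part (4) follows immediately from part (2) and the definition of $D(n)$. Your writeup is just a careful expansion of that argument, including the appropriate handling of the $n=1$ case.
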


\begin{proof}
With the exeption of (4), of which no analogue has previously been mentioned,
these all follow from Proposition~\ref{oldpropsD} and their proofs are exactly
analogous to those of the statements in Proposition~\ref{stoldprops}; meanwhile
(4) follows immediately from (2) and the definition of $D(n)$.
\end{proof}

We then also have the analogue of Proposition~\ref{Dinterp}:

\begin{prop}
For a natural number $n>1$,
\[ D_\st(n) = |\{k: n<E(k)\le E(\cpx{n}_\st)\}|. \]
\end{prop}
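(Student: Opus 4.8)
The plan is to follow the proof of Proposition~\ref{Dinterp} essentially verbatim, replacing $\cpx{n}$ by the stable complexity $\cpx{n}_\st$ and invoking the stable analogue of the identity $D(n)=\cpx{n}-L(n)$. Concretely, recall from Proposition~\ref{stoldpropsD}(2) that for $n>1$ one has $D_\st(n)=\cpx{n}_\st-L(n)$, so it suffices to show that the number of $k$ with $n<E(k)\le E(\cpx{n}_\st)$ equals $\cpx{n}_\st-L(n)$.

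First I would recall that $E$ is strictly increasing (from Theorem~\ref{selfridge}), so that $E(k)\le E(\cpx{n}_\st)$ holds exactly when $k\le\cpx{n}_\st$, and that $E(k)>n$ holds exactly when $k>L(n)$, by the definition of $L(n)$ as the largest $k$ with $E(k)\le n$. Hence $\{k: n<E(k)\le E(\cpx{n}_\st)\}=\{k: L(n)<k\le\cpx{n}_\st\}$, a set whose cardinality is $\cpx{n}_\st-L(n)$ provided $\cpx{n}_\st\ge L(n)$.

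The one thing to check is precisely that $\cpx{n}_\st\ge L(n)$, i.e.\ that this interval of integers is not ``negative''. This is immediate: $D_\st(n)=\cpx{n}_\st-L(n)$ by Proposition~\ref{stoldpropsD}(2), while $D_\st(n)\ge 0$ since $D_\st(n)=\min_{k\ge0}D(3^k n)$ by Proposition~\ref{stoldpropsD}(1) and each $D(3^k n)\ge0$ by Proposition~\ref{oldpropsD}(1). Combining the three displayed identifications gives $|\{k: n<E(k)\le E(\cpx{n}_\st)\}|=\cpx{n}_\st-L(n)=D_\st(n)$, as desired. I do not anticipate any real obstacle here; the only mild subtlety is that the hypothesis $n>1$ is genuinely needed, both so that Proposition~\ref{stoldpropsD}(2) is available and because $L(1)=1$ behaves exceptionally, exactly as in Proposition~\ref{oldpropsD}(2). (One could also argue directly by choosing $k$ with $3^k n$ stable, using $L(3^k n)=L(n)+3k$ and $\cpx{3^k n}_\st=\cpx{n}_\st+3k$ to reduce to Proposition~\ref{Dinterp} applied to $3^k n$, but the route through Proposition~\ref{stoldpropsD}(2) is cleaner.)
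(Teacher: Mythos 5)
Your proposal is correct and matches the paper's own proof: both reduce the count to $\cpx{n}_\st-L(n)$ via the monotonicity of $E$ and then invoke the identity $D_\st(n)=\cpx{n}_\st-L(n)$ from Proposition~\ref{stoldpropsD}. The only difference is that you spell out the nonnegativity check $\cpx{n}_\st\ge L(n)$, which the paper leaves implicit.
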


\begin{proof}
Once again, by definition, $L(n)$ is the largest $k$ such that $E(k)\le n$.
And since $E(k)$ is strictly increasing, the number of $k$ such that $n<E(k)\le
E(\cpx{n}_\st)$ is equal to the difference $\cpx{n}_\st-L(n)$, which by
Proposition~\ref{stoldpropsD} is $D_\st(n)$.
\end{proof}

\begin{rem}
It may seem strange that $1$ needs to be excluded, given that its special status
goes away when stabilized.  However, $\cpx{1}_\st=0$, and $E(0)$ is not defined,
so $n=1$ must still be excluded from the theorem statement.
\end{rem}

Note, by the way:
\begin{prop}
\label{D=0prop}
For any natural number $n$, $D(n)=0$ if and only if $D_\st(n)=0$.
\end{prop}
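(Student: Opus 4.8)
The plan is to observe that both conditions can be rephrased as membership of $n$ in the set of $E$-values $\mathcal{E} := \{E(k) : k \ge 1\}$, and then to exploit the fact that $\mathcal{E}$ is closed under division by $3$ whenever the quotient remains a natural number. So the argument reduces to a structural description of $\mathcal{E}$ coming from Selfridge's theorem, plus a short induction.

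First I would recall, as noted immediately after Proposition~\ref{Dinterp}, that $D(n) = 0$ holds precisely when $n = E(\cpx{n})$, equivalently when $n \in \mathcal{E}$; this is immediate from Proposition~\ref{Dinterp} together with the automatic inequality $E(\cpx{n}) \ge n$. Next, reading off Theorem~\ref{selfridge} shows that $\mathcal{E} = \{3^m : m \ge 0\} \cup \{2 \cdot 3^m : m \ge 0\} \cup \{4 \cdot 3^m : m \ge 0\}$; in particular, every element of $\mathcal{E}$ can be written as $c \cdot 3^m$ with $c \in \{1, 2, 4\}$ and $m \ge 0$. From this I would extract the key closure property: if $x \in \mathcal{E}$ and $3 \mid x$, then $x/3 \in \mathcal{E}$. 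Indeed, writing $x = c \cdot 3^m$ with $c \in \{1,2,4\}$, the hypothesis $3 \mid x$ forces $m \ge 1$ since $3 \nmid c$, and then $x/3 = c \cdot 3^{m-1}$ is again of the required form.

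With these observations the two implications are quick. For the forward direction, if $D(n) = 0$ then part (1) of Proposition~\ref{stoldpropsD} together with part (1) of Proposition~\ref{oldpropsD} gives $0 \le D_\st(n) = \min_{k \ge 0} D(3^k n) \le D(n) = 0$. For the reverse direction, suppose $D_\st(n) = 0$; by part (1) of Proposition~\ref{stoldpropsD} there is some $k \ge 0$ with $D(3^k n) = 0$, i.e.\ $3^k n \in \mathcal{E}$. Since $3 \mid 3^i n$ for every $i$ with $1 \le i \le k$, applying the closure property $k$ times yields $3^{k-1} n, \ldots, 3n, n \in \mathcal{E}$ in turn, so $D(n) = 0$.

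I do not expect any genuine obstacle here; the one thing to keep an eye on is the exceptional $E$-value $E(1) = 1$, but since $1 = 1 \cdot 3^0$ it already fits the description of $\mathcal{E}$ given above, so the argument needs no separate case for $n = 1$ and the rest is routine bookkeeping.
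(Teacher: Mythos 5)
Your proof is correct and follows essentially the same route as the paper: both directions hinge on reading off from Selfridge's theorem that the numbers with $D(n)=0$ are exactly those of the form $3^m$, $2\cdot 3^m$, or $4\cdot 3^m$, and observing that this set is preserved under dividing out powers of $3$. The only cosmetic difference is that you get the forward direction from $0\le D_\st(n)\le D(n)$ rather than from the stability of these numbers, which is if anything slightly cleaner.
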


\begin{proof}
It's immediate that a number $n$ with $D(n)=0$ is stable and so has $D_\st(n)=0$
(unless $n=1$, in which case one still has $D_\st(n)=0$).  For the reverse, a
number $n$ has $D_\st(n)=0$ if and only if there is some $k$ such that $D(3^k
n)=0$.  However, as the numbers $n$ with $D(n)=0$ are precisely those numbers of
the form $3^k$, $2\cdot3^k$, and $4\cdot 3^k$, we see that if $n$ has
$D_\st(n)=0$, it must itself be of one of these forms, and thus have $D(n)=0$.
\end{proof}

See Corollaries~\ref{D=1cor} and \ref{2stab} for related statements.

Having discussed what $D(n)$ is and how it acts, let's finally discuss how it
may be computed.  The quantity $D(n)$ is just the difference $\cpx{n}-L(n)$.  We
know how to compute $\cpx{n}$, although not necessarily quickly; see \cite{anv}
for the currently best-known algorithm for computing complexity, and
\cite{miller} for the best-known bounds on its runtime.  But the other half,
computing $L(n)$, is very simple and can be done much quicker, because it's
given by the following formula:

\begin{prop}
\label{computL}
For a natural number $n$,
\[ L(n) = \max\{3\lfloor\log_3n\rfloor,\ 
3\left\lfloor\log_3\frac{n}{2}\right\rfloor+2,\ 
3\left\lfloor\log_3\frac{n}{4}\right\rfloor+4,1\}. \]
\end{prop}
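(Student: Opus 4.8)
The plan is to unwind the definition of $L(n)$ directly, using Selfridge's formula for $E(k)$ from Theorem~\ref{selfridge}. Since $L(n)$ is by definition the largest $k$ with $E(k)\le n$, and since every integer $k\ge 1$ lies in exactly one of the four families $k=1$; $k\equiv 0\pmod 3$ with $k\ge 3$; $k\equiv 2\pmod 3$ with $k\ge 2$; $k\equiv 1\pmod 3$ with $k\ge 4$, the natural approach is to find, separately within each family, the largest admissible $k$ with $E(k)\le n$, and then take the maximum of the four answers. The constant term $1$ in the stated formula is the answer for the degenerate family $k=1$ (legitimate because $E(1)=1\le n$ for every $n$), while the three floor expressions are the answers for the three congruence classes.

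Concretely, I would establish the two inequalities $L(n)\ge M$ and $L(n)\le M$, where $M$ denotes the right-hand side of the claimed formula. For $L(n)\ge M$ it suffices to exhibit, for each of the four terms $t$, a valid $k$ with $k\ge t$ and $E(k)\le n$, since $L(n)$ is the largest such $k$. For the constant term, $E(1)=1\le n$. For $3\lfloor\log_3 n\rfloor$, set $j=\lfloor\log_3 n\rfloor$; when $j\ge 1$ we have $k=3j\ge 3$ and $E(3j)=3^{j}\le n$ by part (2) of Theorem~\ref{selfridge}, and when $j=0$ the term is $0\le L(n)$. For $3\lfloor\log_3(n/2)\rfloor+2$ and $3\lfloor\log_3(n/4)\rfloor+4$, write $j$ for the corresponding floor; when $j\ge 0$ we get $2\cdot 3^{j}\le n$ (resp.\ $4\cdot 3^{j}\le n$) and the values $3j+2\ge 2$, $3j+4\ge 4>1$ are admissible, with $E(3j+2)=2\cdot 3^{j}$ and $E(3j+4)=4\cdot 3^{j}$ by parts (4) and (3) of Theorem~\ref{selfridge}. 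For $L(n)\le M$, put $k^\ast=L(n)$, so $E(k^\ast)\le n$; if $k^\ast=1$ we are done, and otherwise we read off the residue of $k^\ast$ modulo $3$, substitute the appropriate Selfridge formula for $E(k^\ast)$, and rearrange $E(k^\ast)\le n$ into an inequality whose left side is an integer (namely $k^\ast/3$, $(k^\ast-2)/3$, or $(k^\ast-4)/3$) and whose right side is $\log_3(n/c)$ for the appropriate $c\in\{1,2,4\}$; the integer left side is therefore $\le\lfloor\log_3(n/c)\rfloor$, which after clearing denominators bounds $k^\ast$ by exactly one of the three floor terms, hence by $M$.

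The only delicate point — the closest thing to an obstacle — is bookkeeping at the boundary. Because $E(1)=1$ does not obey the formula $4\cdot 3^{(k-4)/3}$, the value $k=1$ must be split off as its own term and the analysis of the class $k\equiv 1\pmod 3$ restricted to $k\ge 4$; and for small $n$ the expressions $3\lfloor\log_3(n/2)\rfloor+2$ and $3\lfloor\log_3(n/4)\rfloor+4$ can be negative or equal to $1$ (as one sees already for $n=1,2$), which is precisely why including the constant $1$ in the maximum is both necessary and harmless: these terms never exceed $L(n)$, since $L(n)\ge 1$, and they are never needed to attain it. Verifying this is a short finite check, and the rest is the routine floor algebra sketched above.
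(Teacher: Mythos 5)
Your proposal is correct and takes essentially the same route as the paper: both split the candidates $k$ into the four families ($k=1$ and the three congruence classes modulo $3$, with $k\equiv 1$ restricted to $k\ge 4$), use Selfridge's formula to identify the largest admissible $k$ in each family as the corresponding floor expression, and observe that terms which fall below $1$ are harmless in the maximum. Your version simply spells out the two-sided inequality and the boundary bookkeeping more explicitly than the paper does.
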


\begin{proof}
The quantity $L(n)$ is by definition the largest $k$ such that $E(k)\le n$.  The
largest such $k$ congruent to $0$ modulo $3$ is $3\lfloor\log_3n\rfloor$ (so
long as this quantity is positive; otherwise there is none), the largest such
$k$ congruent to $2$ modulo $3$ is $3\lfloor\log_3\frac{n}{2}\rfloor+2$ (with
the same caveat), the largest such $k>1$ congruent to $1$ modulo $3$ is
$3\lfloor\log_3\frac{n}{4}\rfloor+4$ (again with the same caveat), and of course
the largest such $k$ equal to $1$ is $1$.  So the largest of these is $L(n)$
(and any of them that are not valid positive and thus not a valid $k$ will not
affect the maximum).
\end{proof}

Let us make here a definition that will be useful later:

\begin{defn}
For a natural number $n$, define $R(n)=\frac{n}{E(\cpx{n})}$.  We also define
$R_\st(n)$ to be $R(3^k n)$ for any $k$ such that $3^k n$ is stable, or
equivalently (for $n>1$) as $\frac{n}{E(\cpx{n}_\st)}$.
\end{defn}

This is easily related to the defect, as was done in an earlier paper
\cite{paperwo}:

\begin{prop}
\label{dRformulae}
We have, for $n>1$,
\[
\delta(n)=\left\{ \begin{array}{ll}
-3\log_3 R(n)	& \mathrm{if}\quad \cpx{n}\equiv 0\pmod{3}, \\
-3\log_3 R(n) +2\,\delta(2)
	& \mathrm{if}\quad \cpx{n}\equiv 1\pmod{3}, \\
-3\log_3 R(n) +\delta(2)
	& \mathrm{if}\quad \cpx{n}\equiv 2\pmod{3},
\end{array} \right.
\]
and the same relation (without the $n>1$ restriction) holds between $R_\st(n)$,
$\cpx{n}_\st$, and $\dft_\st(n)$.
\end{prop}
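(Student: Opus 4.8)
The plan is a direct case analysis on the residue of $\cpx{n}$ modulo $3$: substitute Selfridge's formula for $E(k)$ (Theorem~\ref{selfridge}) into $R(n)=n/E(\cpx{n})$ and compare with the definition $\dft(n)=\cpx{n}-3\log_3 n$. The only structural point to get right first is which branch of Theorem~\ref{selfridge} applies: since $n>1$ we have $\cpx{n}\ge 2$, so when $\cpx{n}\equiv 1\pmod 3$ the relevant case is $E(\cpx{n})=4\cdot 3^{(\cpx{n}-4)/3}$ and the exceptional value $k=1$ never occurs.

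Write $k=\cpx{n}$. If $k\equiv 0\pmod 3$ then $E(k)=3^{k/3}$, so $-3\log_3 R(n)=-3\log_3 n+k=\dft(n)$. If $k\equiv 2\pmod 3$ then $E(k)=2\cdot 3^{(k-2)/3}$, so $-3\log_3 R(n)=-3\log_3 n+3\log_3 2+(k-2)$; substituting $3\log_3 2=2-\dft(2)$ (immediate from $\cpx{2}=2$) turns this into $\dft(n)-\dft(2)$. If $k\equiv 1\pmod 3$ then $E(k)=4\cdot 3^{(k-4)/3}$, so $-3\log_3 R(n)=-3\log_3 n+3\log_3 4+(k-4)$; substituting $3\log_3 4=4-2\dft(2)$ (i.e.\ $\dft(4)=2\dft(2)$, which is a one-line computation and also follows from Proposition~\ref{small3}) turns this into $\dft(n)-2\dft(2)$. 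Rearranging the three identities gives exactly the three displayed formulae.

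For the stable statement I would rerun the identical computation with $\cpx{n}_\st$ in place of $\cpx{n}$, using $R_\st(n)=n/E(\cpx{n}_\st)$ together with $\dft_\st(n)=\cpx{n}_\st-3\log_3 n$ from Proposition~\ref{stoldprops}(4). The bound $\cpx{n}_\st\ge 2$ still holds when $n>1$: since $\dft_\st(n)\ge 0$ forces $\cpx{n}_\st\ge 3\log_3 n>0$, and $\cpx{n}_\st=1$ would force $n=1$. Hence the branch selection is the same and the three cases go through verbatim. The genuinely new point is $n=1$, where $\cpx{1}_\st=0\equiv 0\pmod 3$, $R_\st(1)=R(3)=1$, and $\dft_\st(1)=\dft(3)=0$, so the $\cpx{n}_\st\equiv 0$ formula reads $0=-3\log_3 1=0$; this is why the $n>1$ restriction can be dropped in the stable version.

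There is no real obstacle here: the proposition is an algebraic identity that falls out of plugging Selfridge's formula into the definitions. If anything requires attention it is the bookkeeping that isolates the correct branch of Theorem~\ref{selfridge} in each congruence class — the role of the inequalities $\cpx{n}\ge 2$ and $\cpx{n}_\st\ge 2$ — and remembering to dispose of the degenerate case $n=1$ separately in the stable statement.
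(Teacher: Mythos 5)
Your proof is correct. The paper itself does not reprove this identity --- its ``proof'' is a one-line citation to Proposition~A.3 of the well-ordering paper \cite{paperwo}, with the stable case declared analogous --- so your write-up is a self-contained version of what that citation hides, and it is the natural argument: plug Selfridge's formula for $E(k)$ into $R(n)=n/E(\cpx{n})$ and compare with $\dft(n)=\cpx{n}-3\log_3 n$, using $3\log_3 2 = 2-\dft(2)$ and $3\log_3 4 = 4-2\dft(2)$. All three cases check out, and you correctly attend to the two points where something could go wrong: the exceptional branch $E(1)=1$ of Theorem~\ref{selfridge} is excluded because $n>1$ forces $\cpx{n}\ge 2$ (and likewise $\cpx{n}_\st\ge 2$), and the $n=1$ case of the stable statement is verified separately via $\cpx{1}_\st=0$, $R_\st(1)=R(3)=1$, $\dft_\st(1)=0$. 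Nothing is missing.
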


\begin{proof}
The relation between $R(n)$ and $\dft(n)$ is just Proposition A.3 from
\cite{paperwo}, and the proof for the stable case is exactly analogous.
\end{proof}

Now we see that in addition to being easy to compute $L(n)$, it's also simple to
determine $D(n)$ from $\dft(n)$, at least if we know the value of $\cpx{n}$
modulo $3$, which technically is implicit in $\dft(n)$.
First, a definition:

\begin{defn}
Let $a$ be a congruence class modulo $3$ and $k$ be a whole number.  Define
\[ \fak{a}(k) = \left\{
\begin{array}{ll}
k		&\textrm{if}\ k\equiv a\pmod{3}\\
k+\dft(2)	&\textrm{if}\ k\equiv a+1\pmod{3}\\
k+2\dft(2)	&\textrm{if}\ k\equiv a+2\pmod{3}\\
\end{array}
\right.\]
\end{defn}

Now:

\begin{thm}
\label{dtoD}
Let $n>1$ be a natural number.  Then $D(n)$ is equal to the smallest $k$ such
that $\dft(n)\le \fak{\cpx{n}}(k)$.  Moreover, if $n$ is any natural number,
$D_\st(n)$ is equal to the smallest $k$ such that $\dft_\st(n)\le
\fak{\cpx{n}_\st}(k)$.

Since two numbers with the same defect also have the same complexity modulo $3$
(and $\dft(n)=1$ if and only if $n=1$), and the analogous statement is also true
of stable complexity and defect, in particular we have that if
$\dft(n)=\dft(m)$ then $D(n)=D(m)$, and if $\dft_\st(n)=\dft_\st(m)$ then
$D_\st(n)=D_\st(m)$.
\end{thm}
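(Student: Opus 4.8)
The plan is to reduce the statement to the defining property of $L(n)$ --- that $L(n)$ is the largest $j$ with $E(j)\le n$ --- by means of a single bookkeeping identity relating $\fak{}$, Selfridge's theorem, and the logarithm. The key observation is that for every $j\ge 2$ the number $E(j)$ is, by Theorem~\ref{selfridge}, of the form $3^i$, $2\cdot 3^i$, or $4\cdot 3^i$ according as $j\equiv 0$, $2$, or $1\pmod 3$, and in each case $\cpx{E(j)}=j$ (using the standard values $\cpx{3^i}=3i$ for $i\ge1$, $\cpx{2\cdot 3^i}=2+3i$, $\cpx{4\cdot 3^i}=4+3i$). Hence $\dft(E(j))$ equals $0$, $\dft(2)$, or $2\dft(2)$ respectively, i.e.\ $\dft(E(j))=\fak{j}(0)$; and unwinding the definition of $\fak{}$ shows that, writing $c=\cpx n$, for every $j$ with $2\le j\le c$ one has
\[ \fak{c}(c-j)=(c-j)+\dft(E(j))=(c-j)+\bigl(j-3\log_3 E(j)\bigr)=c-3\log_3 E(j). \]
I would isolate this identity as the sole computational lemma of the proof.

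Granting the identity, the statement about $D(n)$ is immediate. For $n>1$ we have $E(2)=2\le n$, so $L(n)\ge 2$ and hence $0\le D(n)=c-L(n)\le c-2$; thus it suffices to compare $\dft(n)$ with $\fak{c}(k)$ for $0\le k\le c-2$. Writing $j=c-k\ge 2$ and using $\dft(n)=c-3\log_3 n$ together with the identity and the monotonicity of $\log_3$,
\[ \dft(n)\le\fak{c}(k)\iff 3\log_3 E(j)\le 3\log_3 n\iff E(j)\le n\iff j\le L(n)\iff k\ge D(n). \]
Since $D(n)$ itself lies in $[0,c-2]$, this exhibits $D(n)$ as the least $k$ with $\dft(n)\le\fak{\cpx n}(k)$. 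For the stable version I would run the same computation verbatim with the stable quantities: for $n>1$, Proposition~\ref{stoldpropsD}(2) gives $D_\st(n)=\cpx n_\st-L(n)$, Proposition~\ref{stoldprops}(4) gives $\dft_\st(n)=\cpx n_\st-3\log_3 n$, and Proposition~\ref{dRformulae} supplies the same relation among $R_\st$, $\cpx n_\st$, and $\dft_\st$, so everything goes through with $\cpx n$ replaced by $\cpx n_\st$. The number $n=1$ must be handled separately, since there $\cpx 1_\st-L(1)=-1\ne 0=D_\st(1)$; but $\dft_\st(1)=0$, $\cpx 1_\st=0$, and $\fak{0}(0)=0$, so the least $k$ with $\dft_\st(1)\le\fak{0}(k)$ is indeed $0=D_\st(1)$.

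The final sentence is then formal: if $\dft(n)=\dft(m)$ then either both are $1$, forcing $n=m=1$ by Theorem~\ref{oldprops}(8), or else both of $n,m$ exceed $1$ and Theorem~\ref{oldprops}(7) gives $\cpx n\equiv\cpx m\pmod 3$, so $\fak{\cpx n}$ and $\fak{\cpx m}$ are the same function and the characterization just proved makes $D(n)=D(m)$; the stable version is the same, using Proposition~\ref{stoldprops}(8) to pass from $\dft_\st(n)=\dft_\st(m)$ to $\cpx n_\st\equiv\cpx m_\st\pmod 3$. I do not expect a real obstacle here; the only step requiring care is the identity and its edge cases --- confirming $\cpx{E(j)}=j$ and the value of $\dft(E(j))$ at the smallest relevant $j$ (in particular avoiding $j=1$, where $E(1)=1$ is anomalous), and noting that $L(n)\ge 2$ for $n>1$ keeps every argument of $E$ at least $2$. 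Monotonicity of $\fak{}$ is not needed as a separate input, since the chain of equivalences already pins down exactly which $k$ satisfy the inequality.
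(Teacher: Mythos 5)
Your proof is correct, and it is essentially the paper's argument in a cleaner packaging: both rest on Selfridge's formula for $E(j)$, the fact that $\cpx{E(j)}=j$ for $j\ge2$, and the characterization $D(n)=\cpx{n}-L(n)$ with $L(n)$ the largest $j$ such that $E(j)\le n$. Where the paper does the bookkeeping multiplicatively --- counting values of $E(j)/E(\cpx{n})$ in $(R(n),1]$ and converting to defects via Proposition~\ref{dRformulae} --- you do it additively through the single identity $\fak{c}(c-j)=c-3\log_3 E(j)$, which avoids the detour through $R(n)$ entirely; your handling of the edge cases ($j=1$ excluded, $L(n)\ge2$ for $n>1$, and $n=1$ in the stable statement) is also correct.
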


Note in addition that since $\dft(n)=\dft(m)$ implies $\dft_\st(n)=\dft_\st(m)$
(see statement (2) in Proposition~\ref{stoldprops}) one has that if
$\dft(n)=\dft(m)$ then $D_\st(n)=D_\st(m)$.

Theorem~\ref{dtoD} makes precise how $D(n)$ is ``almost $\lceil\dft(n)\rceil$''.
It is, as was noted in the introduction, not the same, but it is the smallest
$k$ such that $\dft(n)\le \fak{\cpx{n}}(k)$, where $\fak{\cpx{n}}(k)$ may not be
exactly $k$ but never differs from it by more than $2\dft(2)<0.215.$

\begin{proof}
We prove only the non-stabilized case as the stabilized case is exactly
analogous.  We assume $n>1$.

From Proposition~\ref{Dinterp}, we can see that $D(n)$ is determined by $R(n)$
and the value of $\cpx{n}$ modulo $3$.  Specifically,
\[ D(n) = \left| \left\{ k : R(n) < \frac{E(k)}{E(\cpx{n})} \le 1 \right\}
\right|,\]
so $D(n)$ is the number of values of $\frac{E(k)}{E(\cpx{n})}$ in $(R(n),1]$.
What are the values of this?  They can be obtained as products of values
$\frac{E(k)}{E(k+1)}$; this is equal to $2/3$ when $k\equiv1\ \textrm{or}\ 2
\pmod{3}$ (for $k>1$) and to $3/4$ when $k\equiv0\pmod{3}$.

Thus, if $\cpx{n}\equiv0\pmod{3}$, $D(n)$ will increase whenever $R(n)$ passes a
value of the sequence $1, \frac{2}{3}, \frac{4}{9}, \frac{1}{3}, \frac{2}{9},
\frac{4}{27}, \frac{1}{9}, \ldots$; if
$\cpx{n}\equiv1\pmod{3}$, whenever it passes a value of the sequence $1,
\frac{3}{4}, \frac{1}{2}, \frac{1}{3}, \frac{1}{4}, \frac{1}{6}, \frac{1}{9}, \ldots$; and if $\cpx{n}\equiv2 \pmod{3}$,
whenever it passes a value of the sequence $1, \frac{2}{3}, \frac{1}{2},
\frac{1}{3}, \frac{2}{9}, \frac{1}{6}, \frac{1}{9}, \ldots$.  (These sequences are just the sequences obtained by taking
products of one of the three shifts of the periodic sequence $\frac{2}{3},
\frac{2}{3}, \frac{3}{4}, \frac{2}{3}, \frac{2}{3}, \frac{3}{4}\ldots$; note
that regardless of which shift is used, the repeating part of the sequence
always has a product of $\frac{1}{3}$, and so the product sequences will always
consist of three interwoven geometric sequences each with ratio $\frac{1}{3}$.)

It just remains, then, to convert these values of $R(n)$ to their equivalents in
defects, which can be done with Proposition~\ref{dRformulae}.  Once this is done
one finds that the values of $\dft(n)$ where $D(n)$ increases are precisely
those listed in the definition of $\fak{\cpx{n}}$, which completes the proof.
\end{proof}

Theorem~\ref{dtoD} will form half the proof of Theorem~\ref{mainthm}, and
its stable analogue, Theorem~\ref{mainthmstab}; it tells us that the values of
$D(n)$ ``switch over'' when $\dft(n)$ is of the form $k$, $k+\dft(2)$, or
$k+2\dft(2)$ depending on the congruence class of $k-\cpx{n}$ modulo $3$.  The
other half the proof is, of course, Theorem~\ref{chgoverpt} (and its stable
analogue, Theorem~\ref{stabchgoverpt}), which will tell us that these changeover
points are exactly the limits of the initial $\omega^k$ defects in
$\mathscr{D}^a$ (or $\mathscr{D}^a_\st$).

\section{The order interpretation of $D(n)$}
\label{mainsec}

In this section we aim to prove Theorem~\ref{chgoverpt} using the methods
described in Section~\ref{intropoly}; combined with Theorem~\ref{dtoD} from the
previous section, this will prove Theorem~\ref{mainthm}.  Really, we want to
prove generalizations:

\begin{thm}
\label{stabchgoverpt}
For any $k\ge0$ and $a$ a congruence class modulo $3$, the order type of
$\mathscr{D}^a\cap[0,\fak{a}(k)]$ and the order type of
$\mathscr{D}_\st^a\cap[0,\fak{a}(k)]$ are both equal to $\omega^k$.
\end{thm}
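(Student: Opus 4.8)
The plan is to prove the asserted order type $\omega^k$ by establishing the two inequalities separately; since $\mathscr{D}^a_\st\subseteq\mathscr{D}^a$, a lower bound proved at the level of $\mathscr{D}^a_\st$ together with an upper bound proved at the level of $\mathscr{D}^a$ pins down both order types at once. The case $k=0$ I would dispatch directly with Proposition~\ref{small3}: for $a=0,2,1$ the set $\mathscr{D}^a\cap[0,\fak{a}(0)]$ is the singleton $\{0\}$, $\{\dft(2)\}$, $\{2\dft(2)\}$ respectively, and since $3$, $2$, $4$ are stable the same holds for $\mathscr{D}^a_\st$. So assume $k\ge1$ from now on.

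For the lower bound I would exhibit an explicit degree-$k$ low-defect polynomial realizing the bound, namely
\[ f=(3x_1+1)(3x_2+1)\cdots(3x_{k-1}+1)\,g,\qquad g\in\{3x_k+1,\ 2x_k+1,\ 4x_k+1\}, \]
with $g$ chosen to be the first, second, or third option according as $k-a\equiv0,1,2\pmod3$. Each factor $cx_i+1$ is a low-defect polynomial of absolute base complexity $\cpx{c}+1$, so $f$ has degree $k$, leading coefficient $m$ equal to $3^{k-1}$ times the leading coefficient of $g$, and $\cpx{f}\le\cpx{m}+k$; Proposition~\ref{polycpxbd} gives the reverse inequality, so $\cpx{f}=\cpx{m}+k$, which forces $\cpx{f}\equiv a\pmod3$ and $\dft(f)=\deg f+\dft(m)=\fak{a}(k)$ in each case. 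Since $\fak{a}(k)<k+1$ we have $\dft(f)<\deg f+1$, so the final statement of Proposition~\ref{dump} applies: the set of $\dft(f(3^{n_1},\dots,3^{n_k}))$ lying in $\mathscr{D}^{\cpx{f}}_\st=\mathscr{D}^a_\st$ has order type at least $\omega^k$ and supremum $\dft(f)=\fak{a}(k)$, and each such defect is strictly below $\dft(f)$ since $\dft(f(3^{n_1},\dots,3^{n_k}))\le\dft_f(n_1,\dots,n_k)<\dft(f)$. Hence $\mathscr{D}^a_\st\cap[0,\fak{a}(k))$, and a fortiori $\mathscr{D}^a\cap[0,\fak{a}(k)]$, has order type at least $\omega^k$.

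For the upper bound, fix by Theorem~\ref{theory} a good covering $\sS$ of $B_s$, where $s$ is any real with $\fak{a}(k)<s<k+1$ chosen small enough that $\mathscr{D}$ meets neither $(\fak{a}(k),s)$ nor $(\fak{a}(k)-k,\,s-k)$; this is possible because $\mathscr{D}$ is well-ordered. Since $\fak{a}(k)\notin\mathscr{D}$ for $k\ge1$ (by parts (4) and (8) of Theorem~\ref{oldprops}, as $\fak{a}(k)-k\in\{0,\dft(2),2\dft(2)\}$ while $k\ge1$), we get $\mathscr{D}^a\cap[0,\fak{a}(k)]=\mathscr{D}^a\cap[0,s)$, and every defect in this set is $\dft(\xpdd{f}(3^{n_1},\dots,3^{n_{d+1}}))$ for some $f\in\sS$, so it lies in the finite union of the sets $T_f:=\{\dft(\xpdd{f}(3^{n_1},\dots,3^{n_{d+1}})):n_i\ge0\}\cap\mathscr{D}^a$. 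For $f\in\sS$ with $\deg f<k$, Proposition~\ref{oldwo}(2) bounds even $\{\dft(\xpdd{f}(\cdots))\}$ by order type $<\omega^{\deg f+1}\le\omega^k$; for $\deg f=k$ with $\cpx{f}\not\equiv a\pmod3$, Proposition~\ref{dump} bounds $T_f$ by order type $<\omega^k$.

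The remaining case, $\deg f=k$ with $\cpx{f}\equiv a\pmod3$, is the crux, and I expect it to be the main obstacle. Here Proposition~\ref{polycpxbd} gives $\dft(m)\le\dft(f)-k\le s-k$, so the choice of $s$ forces $\dft(m)\le\fak{a}(k)-k\in\{0,\dft(2),2\dft(2)\}$; Proposition~\ref{small3} then restricts $m$ to $\{3^j,\,2\cdot3^j,\,4\cdot3^j\}$, and a congruence computation using $\cpx{f}\equiv a$ together with $k\le\dft(f)\le s<k+1$ eliminates all but one option and yields $\dft(f)=\fak{a}(k)$ exactly. Now split $T_f$ by the exceptional set $S$ of Proposition~\ref{dump}: the contribution from tuples with $(n_1,\dots,n_k)\in S$ has order type $<\omega^k$; for $(n_1,\dots,n_k)\notin S$ the number $f(3^{n_1},\dots,3^{n_k})$ is stable (its stable and unstable complexities both equal $\cpx{f}+3\sum n_i$), so $\xpdd{f}(3^{n_1},\dots,3^{n_{d+1}})$ is a power of $3$ times a stable number and its defect equals $\dft_f(n_1,\dots,n_k)$; thus this contribution is $\dft_f(\mathbb{N}^k\setminus S)$, which as in the proof of Proposition~\ref{dump} has order type $\omega^k$ and supremum $\dft(f)=\fak{a}(k)$. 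Since every element of $T_f$ is $<\dft(f)$, this $\omega^k$-piece is cofinal in $T_f$, so Proposition~\ref{interleave} gives $T_f$ order type $\le\omega^k$. Finally, every element of $\bigcup_{f\in\sS}T_f$ is a defect lying in $[0,s)$, hence in $\mathscr{D}\cap[0,\fak{a}(k)]$, hence strictly below $\fak{a}(k)$; so any piece $T_f$ of order type $\omega^k$ is cofinal in the union, and a last application of Proposition~\ref{interleave} shows $\mathscr{D}^a\cap[0,\fak{a}(k)]$ has order type $\le\omega^k$, which combined with the lower bound completes the proof.
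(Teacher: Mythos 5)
Your proposal is correct and follows essentially the same route as the paper: a good covering plus Proposition~\ref{polycpxbd}, Proposition~\ref{small3}, and the congruence computation to control the degree-$k$ polynomials for the upper bound via Proposition~\ref{interleave}, and an explicit degree-$k$ low-defect polynomial fed into Proposition~\ref{dump} for the lower bound on $\mathscr{D}^a_\st$. The only differences are cosmetic: you use the product polynomial $(3x_1+1)\cdots g$ where the paper uses the nested $(\cdots(mx_1+1)x_2+1\cdots)x_k+1$, you inline the content of Proposition~\ref{dumptype} rather than citing it, and you handle $k=0$ and the enlargement of the covering parameter slightly more explicitly.
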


\begin{thm}
\label{mainthmstab}
Let $n>1$ be a natural number.
Let $\zeta$ be the order type of $\mathscr{D}^{\cpx{n}}\cap[0,\dft(n))$.  Then
$D(n)$ is equal to the smallest $k$ such that $\zeta<\omega^k$.  The same is
true if we replace $\dft(n)$ by $\dft_\st(n)$,
$\mathscr{D}^{\cpx{n}}$ by $\mathscr{D}^{\cpx{n}_\st}_\st$, and $D(n)$ by
$D(n)_\st$.
\end{thm}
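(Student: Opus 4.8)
The plan is to establish Theorem~\ref{stabchgoverpt} first --- this is where essentially all the work lies --- and then obtain Theorem~\ref{mainthmstab} as a short consequence of it together with Theorem~\ref{dtoD}. For the consequence: let $a$ be the congruence class of $\cpx{n}$ modulo $3$ and set $d = D(n)$, so by Theorem~\ref{dtoD} $d$ is the least $k$ with $\dft(n) \le \fak{a}(k)$; thus $\dft(n) \le \fak{a}(d)$, and if $d \ge 1$ also $\dft(n) > \fak{a}(d-1)$. Since $n \ne 1$ we have $\dft(n) \in \mathscr{D}^a$, so $\mathscr{D}^a \cap [0,\dft(n)]$ has order type $\zeta + 1$; being contained in $\mathscr{D}^a \cap [0,\fak{a}(d)]$, which has order type $\omega^d$ by Theorem~\ref{stabchgoverpt}, this forces $\zeta + 1 \le \omega^d$ and hence $\zeta < \omega^d$. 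When $d \ge 1$, the inclusion $\mathscr{D}^a \cap [0,\fak{a}(d-1)] \subseteq \mathscr{D}^a \cap [0,\dft(n))$ shows $\zeta \ge \omega^{d-1}$, so $\zeta \not< \omega^k$ for all $k \le d-1$. Hence $d$ is the least $k$ with $\zeta < \omega^k$; the stable version is word-for-word the same, using the stable halves of Theorems~\ref{dtoD} and \ref{stabchgoverpt} and the fact that $\dft_\st(n) \in \mathscr{D}_\st^{\cpx{n}_\st}$.

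For Theorem~\ref{stabchgoverpt} itself, the inclusion $\mathscr{D}_\st^a \subseteq \mathscr{D}^a$ reduces everything to the lower bound ``$\mathscr{D}_\st^a \cap [0,\fak{a}(k)]$ has order type $\ge \omega^k$'' and the upper bound ``$\mathscr{D}^a \cap [0,\fak{a}(k)]$ has order type $\le \omega^k$''. The lower bound I would get from an explicit witness: build a low-defect polynomial $f_{k,a}$ of degree $k$ by starting from the constant $m \in \{3,2,4\}$ chosen so that $\cpx{m} \equiv a-k \pmod{3}$ (possible since $\cpx{3} \equiv 0$, $\cpx{2} \equiv 2$, $\cpx{4} \equiv 1$ modulo $3$) and applying the operation $g \mapsto g \otimes x_1 + 1$ of Definition~\ref{polydef} exactly $k$ times. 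Each application adds $1$ to the degree and $\cpx{1} = 1$ to the base complexity while keeping $m$ as the leading coefficient, so $\cpx{f_{k,a}} = \cpx{m} + k$ (equal to the absolute base complexity by Proposition~\ref{polycpxbd}), whence $\dft(f_{k,a}) = \cpx{m} + k - 3\log_3 m = \dft(m) + k = \fak{a}(k)$ (matching the three cases of $\fak{a}$), $\cpx{f_{k,a}} \equiv a$, and $\dft(f_{k,a}) = \fak{a}(k) < k+1 = \deg f_{k,a}+1$. Proposition~\ref{dump} then applies to $f_{k,a}$, and its final assertion yields a subset of $\mathscr{D}_\st^a \cap [0,\fak{a}(k))$ of order type $\ge \omega^k$ whose supremum is $\fak{a}(k)$; the case $k=0$ is just the observation from Proposition~\ref{small3} that $\fak{a}(0)$ is a stable defect lying in class $a$.

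For the upper bound, the case $k = 0$ is immediate from Proposition~\ref{small3}, so take $k \ge 1$. First, $\fak{a}(k) \notin \mathscr{D}^a$: if $\dft(n) = \fak{a}(k)$ with $\cpx{n} \equiv a$ then $D(n) = k$ by Theorem~\ref{dtoD}, while Proposition~\ref{dRformulae} forces $R(n) = E(\cpx{n}-k)/E(\cpx{n})$, that is, $n = E(\cpx{n}-k)$, so $\cpx{n} = \cpx{n} - k$, absurd. Hence $\mathscr{D}^a \cap [0,\fak{a}(k)] = \mathscr{D}^a \cap [0,\fak{a}(k))$. Now take a good covering $\sS$ of $B_{\fak{a}(k)}$ (Theorem~\ref{theory}). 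Every $\alpha \in \mathscr{D}^a \cap [0,\fak{a}(k))$ is the defect of a leader lying in $B_{\fak{a}(k)}$, hence is efficiently $3$-represented by some $f \in \sS$; since efficient representation preserves complexity modulo $3$, that $f$ satisfies $\cpx{f} \equiv a$ and $\alpha = \dft_f(\bar{n})$ for the relevant exponents. So $\mathscr{D}^a \cap [0,\fak{a}(k))$ sits inside the finite union, over $f \in \sS$ with $\cpx{f} \equiv a$, of the images of the functions $\dft_f$. Each such image has order type $\omega^{\deg f}$ with supremum $\dft(f)$ (Propositions~\ref{oldwo} and \ref{dftbd}), and $\deg f \le \dft(f) \le \fak{a}(k)$ by Proposition~\ref{polycpxbd} and the good-covering condition, so $\deg f \le k$. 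The key lemma is that $\deg f = k$ and $\cpx{f} \equiv a$ force $\dft(f) = \fak{a}(k)$ exactly: writing $m$ for the leading coefficient and $\fak{a}(k) = k + c$, the chain $k + \dft(m) \le \dft(f) \le k + c$ gives $\dft(m) \le c < 1$, so $\dft(m) \in \{0,\dft(2),2\dft(2)\}$ by Proposition~\ref{small3}, while $\cpx{f} \ge \cpx{m} + k$ together with $\dft(f) \le \fak{a}(k)$ forces $\cpx{f} = \cpx{m} + k$ and hence $\cpx{m} \equiv a - k \pmod{3}$, which pins $\dft(m)$ to the unique value making $k + \dft(m) = \fak{a}(k)$. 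Consequently the only images of order type exactly $\omega^k$ are those from degree-$k$ polynomials, and they all have supremum $\fak{a}(k)$ --- which, because the efficient representations of the numbers $f_{k,a}(3^{n_1},\dots,3^{n_k})$ are themselves picked up by $\sS$, is the supremum of the whole union --- so Proposition~\ref{interleave} caps the union's order type at $\omega^k$, giving the upper bound.

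I expect the delicate step to be this upper bound, and within it the rigidity lemma (that $\deg f = k$ and $\cpx{f} \equiv a$ force $\dft(f) = \fak{a}(k)$), since it is exactly what guarantees that the $\omega^k$-sized pieces of the covering all share the single supremum $\fak{a}(k)$, together with the attendant check that these pieces are cofinal in the union --- the latter leaning on the witness $f_{k,a}$ to fix the union's supremum. By contrast, the reduction to Theorem~\ref{mainthmstab}, the lower bound via $f_{k,a}$ and Proposition~\ref{dump}, and the observation $\fak{a}(k) \notin \mathscr{D}^a$ should all be routine given the machinery recalled in Section~\ref{polysec}.
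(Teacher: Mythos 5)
Your proposal is correct and follows essentially the same route as the paper: Theorem~\ref{dtoD} combined with Theorem~\ref{stabchgoverpt}, the latter proved via a good covering of $B_{\fak{a}(k)}$, the rigidity computation pinning down $\dft(f)$ for the degree-$k$ polynomials in the covering, Proposition~\ref{interleave} for the upper bound, and the explicit witness $(\cdots(mx_1+1)\cdots)x_k+1$ with Proposition~\ref{dump} for the lower bound. The only (harmless) variation is that you cover $\mathscr{D}^a\cap[0,\fak{a}(k))$ by the images of $\dft_f$ over those $f$ with $\cpx{f}\equiv a\pmod{3}$, whereas the paper uses the sets of actual defects of numbers $3$-represented by $\xpdd{f}$ over all $f$, intersected with $\mathscr{D}^a$ --- so the paper needs Proposition~\ref{dumptype} and the congruence clause of Proposition~\ref{dump}, which your decomposition sidesteps.
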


Note that the proofs in this section will rely heavily on the results in
Sections~\ref{polysubsec} and \ref{buildsec}.  Before we prove these, though, we
will need a slight elaboration on Proposition~\ref{dump}:

\begin{prop}
\label{dumptype}
Let $f$ be a low-defect polynomial of degree $d$ with $\dft(f)<d+1$.  Then the
order type of the set of all $\dft(N)$ for $n$ $3$-represented by $\xpdd{f}$ is
exactly $\omega^d$.
\end{prop}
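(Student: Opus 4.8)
The plan is to bound the order type from above and below, using Proposition~\ref{dump} for most of the work and Proposition~\ref{oldwo} for the crude bounds that frame everything. First I would recall from Proposition~\ref{oldwo}(2) that the set of all $\dft(N)$ for $N$ $3$-represented by $\xpdd{f}$ is well-ordered with order type at least $\omega^d$ and at most $\omega^{d+1}$; since $\dft(f)<d+1$ this upper bound specializes to $\omega^d(\lfloor\dft(f)\rfloor+1)=\omega^d$, so in fact the order type is at least $\omega^d$ and at most $\omega^d\cdot m$ for some finite $m$. Hence the only real content is ruling out a ``leading $\omega^d$'' appearing more than once, i.e. showing the order type does not exceed $\omega^d$.

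To get the upper bound I would split the set of all $\dft(N)$, for $N$ $3$-represented by $\xpdd{f}$, according to whether the representing tuple $(n_1,\ldots,n_d)$ lies in the exceptional set $S$ of Proposition~\ref{dump} or not. The part coming from tuples in $S$ is exactly $\{\dft(\xpdd{f}(3^{n_1},\ldots,3^{n_{d+1}})):(n_1,\ldots,n_d)\in S\}$, which Proposition~\ref{dump} tells us has order type less than $\omega^d$. For the part coming from tuples outside $S$: when $(n_1,\ldots,n_d)\notin S$ one has $\cpx{N}_\st=\cpx{f}+3(n_1+\cdots+n_d)$, and I would argue that then $\dft(\xpdd{f}(3^{n_1},\ldots,3^{n_{d+1}}))$ is governed by $\dft_f(n_1,\ldots,n_d)$ up to adding the nonnegative integer $n_{d+1}$-worth of discrepancy bounded by $\dft(f)<d+1$; more carefully, this part is covered by finitely many translates (by $0,1,\ldots,\lfloor\dft(f)\rfloor$) of the set $\{\dft_f(n_1,\ldots,n_d):(n_1,\ldots,n_d)\notin S\}$, which sits inside the image of $\dft_f$ and so has order type at most $\omega^d$. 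Applying Proposition~\ref{interleave} — with $\alpha=d$, noting that the non-exceptional piece is cofinal since by the last part of Proposition~\ref{dump} its supremum equals $\dft(f)$, which is also the supremum of the whole set — gives that the union has order type at most $\omega^d$.

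For the lower bound, the last paragraph of Proposition~\ref{dump} already gives that $\{\dft(f(3^{n_1},\ldots,3^{n_d})):(n_1,\ldots,n_d)\notin S\}$ has order type at least $\omega^d$, and every such $\dft(f(3^{n_1},\ldots,3^{n_d}))$ equals $\dft(\xpdd{f}(3^{n_1},\ldots,3^{n_d},0))$, so it is among the values counted in our set; hence the order type is at least $\omega^d$. Combining the two bounds yields exactly $\omega^d$. The main obstacle I anticipate is the bookkeeping in the upper-bound argument: one must be careful that the difference $\dft_f(n_1,\ldots,n_d)-\dft(\xpdd{f}(3^{n_1},\ldots,3^{n_{d+1}}))$ is a nonnegative integer bounded independently of the tuple (which follows from Proposition~\ref{dftbd}(1)–(2), since it is at most $\dft_f(n_1,\ldots,n_d)\le\dft(f)$), so that only finitely many translates are needed, and that the cofinality hypothesis of Proposition~\ref{interleave} is genuinely met by the non-exceptional piece rather than by one of the ``bad'' translates.
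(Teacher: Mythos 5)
Your overall architecture matches the paper's: split the values $\dft(N)$ according to whether the representing tuple lies in the exceptional set $S$ of Proposition~\ref{dump}, handle the exceptional part by that proposition, and recombine with Proposition~\ref{interleave}. But there is a genuine gap in your treatment of the non-exceptional part. You propose to cover it by the translates $\{\dft_f(n_1,\ldots,n_d)-j : (n_1,\ldots,n_d)\notin S\}$ for $j=0,1,\ldots,\lfloor\dft(f)\rfloor$, each of order type at most $\omega^d$. The problem — which you flag at the end but do not resolve — is that a nonempty translate with $j\ge1$ would in general still have order type $\omega^d$ while being bounded above by $\dft(f)-j$, hence \emph{not} cofinal in the union; hypothesis (2) of Proposition~\ref{interleave} then fails and the upper bound does not follow. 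Bounding the \emph{number} of translates is not the issue; you must show the nonzero translates contribute order type strictly less than $\omega^d$ (in fact, that they are empty).

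The missing observation, which is exactly what the paper uses, is that for $(n_1,\ldots,n_d)\notin S$ one has equality: $\dft(\xpdd{f}(3^{n_1},\ldots,3^{n_{d+1}}))=\dft_f(n_1,\ldots,n_d)$. Indeed, writing $N=f(3^{n_1},\ldots,3^{n_d})$, the condition $(n_1,\ldots,n_d)\notin S$ says $\cpx{N}_\st=\cpx{f}+3(n_1+\cdots+n_d)$, i.e.\ $\dft_\st(N)=\dft_f(n_1,\ldots,n_d)$; then by Proposition~\ref{stoldprops} and Proposition~\ref{dftbd} one has
$\dft_f(n_1,\ldots,n_d)=\dft_\st(N)\le\dft(3^{n_{d+1}}N)\le\dft_f(n_1,\ldots,n_d)$,
so all these quantities coincide. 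Hence the non-exceptional part is literally a subset of the image of $\dft_f$, of order type at most $\omega^d$ by Proposition~\ref{oldwo}, no translates are needed, and Proposition~\ref{interleave} applies cleanly. Two smaller points: your assertion that $\omega^d(\lfloor\dft(f)\rfloor+1)=\omega^d$ is false (here $\lfloor\dft(f)\rfloor=d$ since $d\le\dft(f)<d+1$, so the crude bound is $\omega^d(d+1)$), though you correct course in the very next sentence; and your lower-bound argument via $n_{d+1}=0$ is fine, as is the paper's more direct citation of Proposition~\ref{oldwo}.
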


\begin{proof}
By Proposition~\ref{dump},
$\{\dft(\xpdd{f}(3^{n_1},\ldots,3^{n_d})):(n_1,\ldots,n_d)\in S\}$ has order
type less than $\omega^d$.  Meanwhile, also by Proposition~\ref{dump}, the set
\[\{\dft(f(3^{n_1},\ldots,3^{n_d})):(n_1,\ldots,n_d)\notin S\}\]
has order type at least $\omega^d$, and is cofinal in $[0,\dft(f))$ (or
$[0,\dft(f)]$ if $\deg f=0$) and therefore in the set of all $\dft(N)$ for $n$
$3$-represented by $\xpdd{f}$.  But in fact, for $(n_1,\ldots,n_d)\notin S$,
one has $\dft(\xpdd{f}(3^{n_1},\ldots,3^{n_{d+1}})=\dft_f(n_1,\ldots,n_d)$,
and so this set (even when $f(3^{n_1},\ldots,3^{n_d})$ is replaced by
$\xpdd{f}(3^{n_1},\ldots,3^{n_{d+1}})$) is a subset of the image of
$\dft_f$, which by Proposition~\ref{oldwo} has order type $\omega^d$.  So
the conditions of Proposition~\ref{interleave} apply, and the union of these two
sets, the set of all $\dft(n)$ for $N$ $3$-represented by $\xpdd{f}$, has order
type at most $\omega^d$.  We already know by Proposition~\ref{oldwo} it has
order type at least $\omega^d$, so this proves the claim.
\end{proof}

We now prove the main theorems of this section.

\begin{proof}[Proof of Theorem~\ref{stabchgoverpt}]
We need to show that the order type of $\mathscr{D}^a\cap[0,\fak{a}(k)]$, as
well as the order type of $\mathscr{D}_\st^a\cap[0,\fak{a}(k)]$, are both equal
to $\omega^k$.  This proof breaks down into two parts, an upper bound and a
lower bound.  Since $\mathscr{D}_\st^a\subseteq\mathscr{D}^a$, it suffices to
prove the upper bound for $\mathscr{D}^a\cap[0,\fak{a}(k)]$, and the lower bound
for $\mathscr{D}_\st^a\cap[0,\fak{a}(k)]$.

We begin with the upper bound.  First, we observe that $\fak{a}(k)$ is not
itself an element of $\mathscr{D}^a$ for any $k>0$.  We can see this as neither
$k+\dft(2)$ nor $k+2\dft(2)$ is a defect for any $k>0$ (such a defect would have
to come from some number $n$ satisfying $3^\ell n=2$ or $3^\ell n=4$ for
$\ell>0$, which is impossible), and similarly no nonzero integer is a defect
except $k=1$, which though an element of $\mathscr{D}$ is by definition excluded
from all three $\mathscr{D}^a$.  Thus
$\mathscr{D}^a\cap[0,\fak{a}(k)]=\mathscr{D}^a\cap[0,\fak{a}(k))$ and we may
concern ourselves with the order type of the latter.

Now we take a good covering $\sS$ of $B_{\fak{a}(k)}$ as per
Theorem~\ref{theory}.  For any $f\in\sS$ with leading coefficient $m$, we have
the inequality $\dft(m)+\deg f\le \dft(f)\le \fak{a}(k)$.  In particular, for
any $f\in \sS$, we have $\deg f\le \lfloor \fak{a}(k)\rfloor = k$.

Suppose now that $\deg f=k$; then there is more we can say.  For in this case,
we have $\dft(m)\le \fak{a}(k)-k\le2\dft(2)$.  Thus
$\dft(m)\in\{0,\dft(2),2\dft(2)\}$ by Proposition~\ref{small3}.
Note that by their respective definitions, $\dft(f)\equiv\dft(m)\pmod{1}$;
and, as noted above, $\dft(f)\ge\deg f=k$, and so
$\dft(f)=k+\dft(m)\in\{k,k+\dft(2),k+2\dft(2)\}$.  Note that
$\dft(f)=k+\dft(m)$ means that
\[ k+\cpx{m}-3\log_3 m = \cpx{f} - 3\log_3 m \]
and therefore $\cpx{f}=k-\cpx{m}$.  Moreover, if $\dft(m)=0$, then $m$ is of the
form $3^\ell$ (for some $\ell>0)$ and $\cpx{m}=3\ell$, if $\dft(m)=\dft(2)$ then
$m$ is of the form $2\cdot3^\ell$ with $\cpx{m}=2+3\ell$, and if
$\dft(m)=2\dft(2)$ then $m$ is of the form $4\cdot3^\ell$ with
$\cpx{m}=4+3\ell$; from this we can conclude that, modulo $3$,
\[
\cpx{f}\equiv\left\{\begin{array}{ll}
k	& \textrm{if}\ \dft(f)=k		\\
k-2	& \textrm{if}\ \dft(f)=k+\dft(2)	\\
k-1	& \textrm{if}\ \dft(f)=k+2\dft(2)	\\
\end{array}\right.\]

Now, let
$T_f=\{\dft(\xpdd{f}(3^{n_1},\ldots,3^{n_{d+1}})):n_1,\ldots,n_{d+1}\ge0\} \cap
\mathscr{D}^a$, where $d=\deg f$.  Then by the assumption that $\sS$ is a good
covering of $B_{\fak{a}(k)}$, we have that
\[ \mathscr{D}^a\cap[0,\fak{a}(k)) = \bigcup_{f\in\sS} T_f. \]
We want to show that the conditions of Proposition~\ref{interleave} hold for the
sets $T_f$, so that we can conclude that $\mathscr{D}^a\cap[0,\fak{a}(k))$ has
order type at most $\omega^k$.  If $\deg f<k$, then, by Proposition~\ref{oldwo},
$T_f$ has order type less than $\omega^k$, and thus so does
$T_f\cap\mathscr{D}^a$.  Meanwhile, if $\deg f=k$, then since $\dft(f)\le
\fak{a}(k)<k+1$, we can apply
Proposition~\ref{dumptype} to conclude that the set of $\dft(N)$ for $N$
$3$-represented by $\xpdd{f}$ has order type
$\omega^k$.  However, if $\dft(f)\ne \fak{a}(k)$, then by the previous paragraph
and Proposition~\ref{dump}, we see that while this has order type $\omega^k$,
$T_f$, which is its intersection with $\mathscr{D}^a$, has order type less than
$\omega^k$.

It remains to check, then, that when $\deg f=k$ and $\dft(f)=\fak{a}(k)$, that
the set $T_f$ is cofinal in $\bigcup_{f\in\sS}
T_f=\mathscr{D}^a\cap[0,\fak{a}(k))$, or in other words, just that it's cofinal
in $[0,\fak{a}(k))$.  But this follows from Proposition~\ref{dump}, which in
fact goes further and states that $T_f\cap\mathscr{D}^a_\st$ is cofinal in
$[0,\dft(f))=[0,\fak{a}(k))$.

Thus, applying Proposition~\ref{interleave}, we conclude that
$\mathscr{D}^a\cap[0,\fak{a}(k))$ has order type at most $\omega^k$.  This
proves the upper bound.

To prove the lower bound, let's consider the low-defect polynomial
\[f = (\cdots((mx_1+1)x_2+1)\cdots)x_k + 1\]
(for a particular $m$ to be chosen shortly)
which has $\cpx{f}=\cpx{m}+k$.  (The upper bound on $\cpx{f}$ is immediate and
the lower bound follows from Proposition~\ref{polycpxbd}.)  For the value of
$m$, we take
\[m=\left\{\begin{array}{ll}
3	&\textrm{if}\ k-a\equiv 0\pmod{3}\\
4	&\textrm{if}\ k-a\equiv 2\pmod{3}\\
2	&\textrm{if}\ k-a\equiv 1\pmod{3},
\end{array}\right.\]
so that $\cpx{m}\equiv a-k\pmod{3}$ and $\cpx{f}\equiv a\pmod{3}$, meaning
$\mathscr{D}_\st^{\cpx{f}}=\mathscr{D}_\st^a$.

Then $\dft(f)=\fak{a}(k)$ and so in particular $\dft(f)<k+1$, meaning once again
we can apply Proposition~\ref{dump} to conclude that the set
\[\{\dft(f(3^{n_1},\ldots,3^{n_k})):(n_1,\ldots,n_k)\in \mathbb{Z}^k_{\ge 0}\}
\cap \mathscr{D}_\st^{\cpx{f}}\] has order type at least $\omega^k$.  Since this
set is bounded above by $\dft(f)=\fak{a}(k)$ and
$\mathscr{D}_\st^{\cpx{f}}=\mathscr{D}_\st^a$, we conclude that the order type
of $\mathscr{D}_\st^a\cap[0,\fak{a}(k))$ is at least $\omega^k$.  This completes
the proof.
\end{proof}

In particular this encompasses Theorem~\ref{chgoverpt}.

\begin{proof}[Proof of Theorem~\ref{chgoverpt}]
This is just a rephrasing of Theorem~\ref{stabchgoverpt} with the application to
$\mathscr{D}^a_\st$ omitted.
\end{proof}

Having proven Theorem~\ref{stabchgoverpt}, we can now combine it with
Theorem~\ref{dtoD} to obtain Theorem~\ref{mainthmstab} and
Theorem~\ref{mainthm}:

\begin{proof}[Proof of Theorem~\ref{mainthmstab}]
By Theorem~\ref{dtoD}, $D(n)$ is equal to the smallest $k$ such that $\dft(n)\le
\fak{\cpx{n}}(k)$.  However, since the order type of
$\mathscr{D}^{\cpx{n}}\cap[0,\fak{\cpx{n}}(k))$ is equal to $\omega^k$, one has
that $\zeta<\omega^k$ if and only if $\dft(n)<\fak{\cpx{n}}(k)$.  Thus $D(n)$ is
equal to the smallest $k$ such that $\zeta<\omega^k$.  The proof for the
stabilized version is similar.
\end{proof}

\begin{proof}[Proof of Theorem~\ref{mainthm}]
This is just the special case of Theorem~\ref{mainthmstab} where we only
consider $\dft(n)$ and not $\dft_\st(n)$.
\end{proof}

\section{Numbers $n$ with $D(n)\le 1$}
\label{Erk}

In the previous section we showed that the numbers with integral defect at most
$k$ correspond to the initial $\omega^k$ defects in each of $\mathscr{D}^0$,
$\mathscr{D}^1$, and $\mathscr{D}^2$.  In this section we take a closer look at
the initial $\omega$, the numbers with integral defect at most $1$, and use this
to generalize Theorem~\ref{rawsthm}.

Let's start by listing all the numbers with integral defect at most $1$:

\begin{thm}
\label{defect1}
A natural number $n$ satisfies $D(n)\le1$ if and only if it can be written in
one of the following forms:
\begin{enumerate}
\item $1$, of complexity $1$
\item $2^a 3^k$ for $a\le10$, of complexity $2a+3k$ (for $a$, $k$ not both zero)
\item $2^a(2^b3^\ell+1)3^k$ for $a+b\le 2$, of complexity $2(a+b)+3(\ell+k)+1$
(for $b$, $\ell$ not both zero).
\end{enumerate}
\end{thm}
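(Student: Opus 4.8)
The plan is to convert the condition $D(n)\le 1$ into a bound on $\dft(n)$ and then read off the possibilities from the structure theory of low-defect polynomials developed in Section~\ref{polysec}. By Theorem~\ref{dtoD}, $D(n)\le 1$ holds precisely when $\dft(n)\le\fak{\cpx{n}}(1)$, and since $\fak{a}(1)$ is one of $1$, $1+\dft(2)$, $1+2\dft(2)$, each less than $5/4$, any such $n$ satisfies $\dft(n)<5/4$. Writing $n=3^k m$ with $m$ the unique leader satisfying $\dft(m)=\dft(n)$ (Proposition~\ref{arbr}), we have $\cpx{n}\equiv\cpx{m}\pmod 3$, so $\fak{\cpx{n}}(1)=\fak{\cpx{m}}(1)$ and hence $D(n)\le 1$ is equivalent to $D(m)\le 1$; conversely, if $m$ is a leader with $D(m)\le 1$ then $D(3^k m)\le D(m)\le 1$ for all $k\ge 0$ by Proposition~\ref{oldpropsD}. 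Thus the numbers with $D(n)\le 1$ are exactly the powers-of-$3$ multiples of the leaders $m$ with $D(m)\le 1$, and it suffices to classify those leaders and then verify that the resulting set of $n$ is precisely the one described by the three forms.

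To classify such leaders, observe that each lies in $B_{5/4}$, so by Theorem~\ref{theory} we may fix a good covering $\sS$ of $B_{5/4}$; each such $m$ is then efficiently $3$-represented by some $f\in\sS$. Since $\dft(f)\ge\deg f$ always (Proposition~\ref{polycpxbd}, as leading coefficients have nonnegative defect) while $\dft(f)\le 5/4$, every $f\in\sS$ has degree $0$ or $1$. The degree-$0$ members are constants of defect $<5/4$; after removing powers of $3$ these are $1$, the powers $2,4,\dots,2^{11}$, and finitely many more, each of which—by the classification of small-defect numbers in \cite{paper1}, extended over the short range $[1,5/4)$—is already one of the listed forms. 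For a degree-$1$ member $f=c_1x_1+c_0$, the inequality $\dft(f)\ge 1+\dft(c_1)$ together with $\dft(f)<5/4$ forces $\dft(c_1)<1/4$, so by Proposition~\ref{small3} (and its sharper form in \cite{paper1}) the leading coefficient $c_1$ is, up to a power of $3$, one of $1,2,4$; a further estimate on $\dft(f)$ then forces the constant term to be a power of $2$, yielding $f=2^a(2^b x_1+1)$ with $a+b\le 2$ (the bound being exactly $\dft(f)=1+(a+b)\dft(2)<5/4$). Substituting powers of $3$ into $\xpdd f$ produces precisely the numbers of forms (2) and (3); applying the remaining filter $D(m)\le 1$, i.e. $\dft(m)\le\fak{\cpx{m}}(1)$, then cuts the degree-$0$ range from $a\le 11$ to $a\le 10$.

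For the converse direction, for each $n$ of one of the three forms I would write down the obvious expression to bound $\cpx{n}$ above, and use Proposition~\ref{polycpxbd}, the known values $\cpx{2^a}=2a$ for small $a$, and the complexity formulas for small-defect numbers from \cite{paper1} to obtain the matching lower bound, so that the stated complexity formulas hold; then $D(n)\le 1$ follows either from the criterion $D(n)\le 1\iff n>E(\cpx{n}-1)$ with $E$ given explicitly by Theorem~\ref{selfridge}, or equivalently by checking $\dft(n)\le\fak{\cpx{n}}(1)$ with the same estimates. The bound $a\le 10$ in form (2) is sharp: $\cpx{2^{11}}=22\equiv 1\pmod 3$, so $\fak{\cpx{2^{11}}}(1)=1<11\,\dft(2)=\dft(2^{11})$, whence $D(2^{11})=2$.

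The step I expect to be the main obstacle is the degree-$1$ enumeration—in particular showing the constant term of $f$ must be a power of $2$ rather than merely small, and disposing of the finitely many low-defect polynomials that a priori could have a slightly larger leading coefficient or constant term but turn out either to have defect at least $5/4$ or to $3$-represent only numbers already on the list. This is routine in spirit but requires careful bookkeeping with Propositions~\ref{dftbd} and \ref{polycpxbd}; the small-defect classification of \cite{paper1} provides a template and may be quoted directly for most of the work, leaving only the short extension to $[1,5/4)$ to be carried out by hand. A secondary point to state explicitly is that the reduction ``classify the leaders, then multiply through by $3^k$'' introduces nothing new, which is immediate from the monotonicity of $D$ under multiplication by $3$ recorded in Proposition~\ref{oldpropsD}.
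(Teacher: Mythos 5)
Your reduction is exactly the paper's: Theorem~\ref{dtoD} turns $D(n)\le 1$ into $\dft(n)\le \fak{\cpx{n}}(1)\le 1+2\dft(2)$, and then one appeals to a classification of numbers of small defect and filters. Where you diverge is in how that classification is obtained. The paper simply cites Theorem~31 of \cite{paper1}, which classifies all $n$ with $\dft(n)<12\dft(2)$ together with their complexities; since $12\dft(2)\approx 1.286>1+2\dft(2)$, that range already covers everything needed, so the ``short extension to $[1,5/4)$'' you worry about is unnecessary --- the prior classification reaches past $5/4$ on its own. Your alternative derivation via a good covering of $B_{5/4}$ is sound in outline and is essentially how such classifications are produced in \cite{paperwo,theory}: degree $\le 1$ follows from $\dft(f)\ge\deg f$, and for a degree-one $f=v(wx_1+u)$ the chain $\dft(f)\ge \dft(vw)+\cpx{u}$ forces $u=1$ and $\dft(vw)\le 1/4<3\dft(2)$, whence $vw\in\{3^j,2\cdot3^j,4\cdot3^j\}$ by the classification underlying Proposition~\ref{small3}. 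Two small corrections to your sketch: the constant term of $f$ need not be a power of $2$ but rather $2^{a}3^{j}$ with $a\le 2$ (e.g.\ $f=4(3x_1+1)$ is admissible), though the represented numbers still land in form (3) after relabeling exponents; and the degree-zero members of a good covering are not ``all constants of defect $<5/4$'' but only finitely many specific ones, with leaders like $7$ being picked up by the degree-one polynomials instead. The bookkeeping you flag as the main obstacle is real work, but it is precisely the content of the cited Theorem~31, so quoting it (as the paper does) closes the gap; your self-contained route buys independence from that citation at the cost of redoing its case analysis, including verifying the complexity formulas such as $\cpx{2^a}=2a$ for $a\le 11$ that make the cutoff $a\le 10$ sharp.
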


\begin{proof}
By Theorem~\ref{dtoD}, any $n$ with $D(n)\le 1$ must have
$\dft(n)\le1+2\dft(2)$.  Theorem~31 from \cite{paper1} gives a classification of
all numbers $n$ with $\dft(n)<12\dft(2)$, together with their complexities;
since $12\dft(2)>1+2\dft(2)$, any $n$ with $D(n)\le 1$ may be found among these.
(One may also use the algorithms from \cite{paperalg} to find such a
classification.)  It is then a straightforward matter to determine which of the
$n$ listed there have $D(n)\le 1$.
\end{proof}

This has an important corollary:

\begin{cor}
\label{D=1cor}
For any natural number $n$, $D(n)=1$ if and only if $D_\st(n)=1$.
\end{cor}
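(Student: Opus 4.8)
The plan is to derive the corollary from the explicit classification of Theorem~\ref{defect1} together with the elementary comparisons between $D$ and $D_\st$ recorded in Propositions~\ref{oldpropsD} and \ref{stoldpropsD} and the equivalence in Proposition~\ref{D=0prop}. The one substantive ingredient is the observation that the set $\{n:D(n)\le 1\}$ is closed under division by $3$: if $N$ is of one of the forms (1)--(3) of Theorem~\ref{defect1} and $3\mid N$, then $N/3$ is again of one of those forms. I would verify this by a short case check. If $N=2^a3^k$ with $a\le 10$ and $3\mid N$, then $k\ge 1$ and $N/3=2^a3^{k-1}$ is again of form (2) (or is $1$). If $N=2^a(2^b3^\ell+1)3^k$ with $a+b\le 2$ and $3\mid N$, then either $k\ge 1$, so $N/3=2^a(2^b3^\ell+1)3^{k-1}$ is again of form (3), or $k=0$ and $3\mid 2^b3^\ell+1$; the latter forces $\ell=0$ and $b$ odd, hence $b=1$ (since $b\le a+b\le 2$) and $a\le 1$, so $N=2^a\cdot 3$ and $N/3=2^a$ is of form (2) (or is $1$). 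The case $N=1$ is vacuous.

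Granting this, the ``only if'' direction is immediate: if $D(n)=1$ then $D_\st(n)\le D(n)=1$ by Proposition~\ref{stoldpropsD}, while $D_\st(n)=0$ is impossible, since by Proposition~\ref{D=0prop} it would force $D(n)=0$; hence $D_\st(n)=1$. For the ``if'' direction, suppose $D_\st(n)=1$. By Proposition~\ref{stoldpropsD} we have $D_\st(n)=\min_{k\ge 0}D(3^kn)$, so there is some $K$ with $D(3^Kn)=1$; then $3^Kn$ is of one of the forms in Theorem~\ref{defect1}, and applying the closure observation $K$ times shows $n$ is too, i.e.\ $D(n)\le 1$. Combined with $D(n)\ge D_\st(n)=1$ (Proposition~\ref{stoldpropsD} again), this gives $D(n)=1$.

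The only place any real work happens is the finite case analysis behind the closure claim, and within it the sub-case where dividing by $3$ forces a change in the algebraic shape of the representation -- a form-(3) number collapsing to a form-(2) number, or either collapsing to $1$. The point to watch is that the hypothesis $a+b\le 2$ in form (3) is precisely what guarantees the exponent of $2$ in the quotient stays within the bound $a\le 10$ demanded by form (2); everything else is routine bookkeeping with the inequalities $D_\st(n)\le D(n)$ and $D_\st(n)=\min_k D(3^kn)$.
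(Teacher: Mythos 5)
Your proof is correct and follows essentially the same route as the paper: both arguments use the explicit classification of Theorem~\ref{defect1} to show that the set $\{n:D(n)\le 1\}$ is closed under removing factors of $3$, combine this with the characterization $D_\st(n)=\min_{k\ge0}D(3^kn)$, and then invoke Proposition~\ref{D=0prop} to upgrade the equivalence from ``$\le 1$'' to ``$=1$''. The paper merely asserts the closure property as evident from the classification, whereas you carry out the finite case check (including the collapse of a form-(3) number such as $2^a\cdot 3$ to a form-(2) number), which is a welcome filling-in of detail rather than a different method.
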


\begin{proof}
From Theorem~\ref{defect1}, we see that if $D(n)\le 1$ then we also have $D(3^k
n)\le 1$, and if $D(3^k n)\le 1$ then we have $D(n)\le 1$; this shows that
$D(n)\le 1$ if and only if $D_\st(n)\le 1$.  Combining this with
Proposition~\ref{D=0prop} proves the claim.
\end{proof}

From this we can conclude:

\begin{cor}
\label{2stab}
For any natural number $n>1$, if $D(n)\le 2$ then $n$ is stable (and so
$D_\st(n)\le 2$).
\end{cor}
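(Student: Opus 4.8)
The plan is to derive Corollary~\ref{2stab} purely formally from the comparison between $D(n)$ and $D_\st(n)$ recorded in Proposition~\ref{stoldpropsD}, together with the two ``coincidence'' facts already proved: Proposition~\ref{D=0prop} (that $D(n)=0 \iff D_\st(n)=0$) and Corollary~\ref{D=1cor} (that $D(n)=1 \iff D_\st(n)=1$). The first thing I would record is that these two facts combine to give, for each $j\in\{0,1\}$, that $D(n)=j$ if and only if $D_\st(n)=j$; since $D(n)$ and $D_\st(n)$ are nonnegative integers, it follows that $D(n)\le 1$ if and only if $D_\st(n)\le 1$, and that whenever either side holds one in fact has $D(n)=D_\st(n)$.

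With that observation in hand I would split into the cases $D(n)\le 1$ and $D(n)=2$. In the first case $D(n)=D_\st(n)$ by the remark above, so since $n>1$ the equality clause of part (3) of Proposition~\ref{stoldpropsD} --- namely that $D_\st(n)\le D(n)$, with equality if and only if $n$ is stable or $n=1$ --- forces $n$ to be stable. In the second case I would argue by contradiction: if $n$ were unstable, then since also $n\ne 1$, part (3) of Proposition~\ref{stoldpropsD} would yield the \emph{strict} inequality $D_\st(n)<D(n)=2$, i.e.\ $D_\st(n)\le 1$; but then the remark above gives $D(n)\le 1$ as well, contradicting $D(n)=2$. Hence $n$ is stable in this case too.

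So in all cases $n$ is stable, and then $D_\st(n)=D(n)\le 2$ by the same equality clause of Proposition~\ref{stoldpropsD}(3), which establishes the parenthetical assertion. I do not expect a real obstacle here: the only mildly delicate point is the bootstrapping step --- an unstable number (with $n>1$) must strictly decrease its integer defect upon stabilization, so landing in $\{0,1\}$ is possible only if it was already there --- and everything else is immediate from the quoted results.
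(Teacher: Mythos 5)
Your proof is correct and follows essentially the same route as the paper: both arguments rest on Proposition~\ref{D=0prop} and Corollary~\ref{D=1cor} together with the fact that an unstable $n>1$ must strictly decrease its integer defect upon stabilization, so a value of $2$ cannot drop into $\{0,1\}$ without contradiction. The only cosmetic difference is that you phrase the stability criterion via the equality clause $D_\st(n)=D(n)$ in Proposition~\ref{stoldpropsD}(3), while the paper phrases it via $D(3^k n)=D(n)$ for all $k$; these are interchangeable.
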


\begin{proof}
If $D(n)=0$ or $D(n)=1$, this is Proposition~\ref{D=0prop} or
Corollary~\ref{D=1cor}, respectively.  If $D(n)=2$, then for any $k\ge 0$, if we
had $D(3^k n)<2$, then, by Proposition~\ref{D=0prop} and Corollary~\ref{D=1cor},
we would have $D(n)<2$, contrary to assumption; thus $D(3^k n)=2$ for all $k\ge
0$, i.e., $n$ is stable (by Proposition~\ref{stoldpropsD}).
\end{proof}

Note that the converse, that if $D_\st(n)\le 2$ then $D(n)\le 2$, does not hold; for instance, we can
consider $107$, which has $D_\st(107)=2$ but $D(107)=3$, or $683$, which has
$D_\st(683)=2$ but $D(683)=4$.  (It is easy to verify that these numbers have
stable integer defect at most $2$ because $D(321)=D(2049)=2$; that these numbers
do then have stable integer defect equal to $2$ and not any lower can then be
inferred from Corollary~\ref{2stab}.  Alternately, the stable complexity, and
thus stable integer defect, may be computed with the algorithms from
\cite{paperalg}.)

However, for our purposes, the most important consequence of
Corollary~\ref{D=1cor} is the following rephrasing of it:

\begin{prop}
\label{v3lem}
Let $k>1$ be a natural number and suppose $h$ is a value of $R$ corresponding
to a defect in the initial $\omega$ of $\mathscr{D}^k$.  Then if $hE(k)$ is a
natural number $n$, one has $\cpx{n}=k$, and, moreover, $n>E(k-1)$.
\end{prop}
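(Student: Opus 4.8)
The plan is to reduce the statement to a fact about a single number $m$: the number $n=hE(k)$ will turn out to equal $m$ up to a power of $3$, and the conclusion then follows from the stability of $m$ together with the identity $E(a+3)=3E(a)$ (valid for $a>1$).

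First I would unpack the hypothesis. By assumption $h=R(m)$ for some natural number $m$ whose defect lies in the initial $\omega$ of $\mathscr{D}^k$; if one reads ``corresponds to a defect'' in the sense of Proposition~\ref{dRformulae}, note that in a fixed residue class of $\cpx{\cdot}$ modulo $3$ that proposition makes $R$ and $\dft$ determine each other, so the two readings agree. Since $\dft(m)\in\mathscr{D}^k$ we have $\cpx{m}\equiv k\pmod 3$ and $m\ne 1$, hence $\cpx{m}\ge 2$; and by Theorem~\ref{mainthm}, lying in the initial $\omega$ of $\mathscr{D}^k$ is equivalent to $D(m)\le 1$. Since $D(m)\le 1$ and $m>1$, Corollary~\ref{2stab} (itself a consequence of Corollary~\ref{D=1cor}) tells us $m$ is stable. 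Finally, $h=R(m)\le 1$ because $m\le E(\cpx{m})$.

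Next I would set $i:=(k-\cpx{m})/3$, which is an integer since $\cpx{m}\equiv k\pmod 3$, and split into two cases. If $i\ge 0$, then since $\cpx{m}\ge 2>1$ the identity $E(a+3)=3E(a)$ gives $E(k)=3^{i}E(\cpx{m})$, so
\[ n=hE(k)=\frac{m}{E(\cpx{m})}\cdot 3^{i}E(\cpx{m})=3^{i}m \]
(in particular $n$ is automatically a natural number in this case). Stability of $m$ now gives $\cpx{n}=\cpx{3^{i}m}=3i+\cpx{m}=k$, and since $\cpx{3^{i}m}=3i+\cpx{m}$, Proposition~\ref{oldpropsD}(3) gives $D(n)=D(3^{i}m)=D(m)\le 1$. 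If instead $i<0$, put $j:=-i\ge 1$; since $k>1$, the same identity gives $E(\cpx{m})=3^{j}E(k)$, so $n=hE(k)=m/3^{j}$, and the hypothesis that $n$ is a natural number forces $3^{j}\mid m$, i.e.\ $m=3^{j}n$. Then $\cpx{3^{j}n}=\cpx{m}=k+3j$, so by the subadditivity bound $\cpx{3^{j}n}\le\cpx{n}+3j$ we get $\cpx{n}\ge\cpx{3^{j}n}-3j=k$, while $n=hE(k)\le E(k)$ forces $\cpx{n}\le k$; hence $\cpx{n}=k$, so in particular $n>1$, and since $\cpx{3^{j}n}=\cpx{n}+3j$, Proposition~\ref{oldpropsD}(3) again gives $D(n)=D(3^{j}n)=D(m)\le 1$.

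In both cases $\cpx{n}=k$ and $D(n)\le 1$, so by Proposition~\ref{Dinterp} (which yields in particular that $D(N)\le 1$ iff $N>E(\cpx{N}-1)$) we get $n>E(\cpx{n}-1)=E(k-1)$, finishing the proof. The one genuinely delicate point is the case $i\ge 0$: there it is essential that $m$ be stable, for otherwise $\cpx{3^{i}m}$ could drop strictly below $3i+\cpx{m}=k$ and the conclusion $\cpx{n}=k$ would fail---this is exactly where the hypothesis $D(m)\le 1$, through Corollaries~\ref{D=1cor} and~\ref{2stab}, does its work. Everything else is routine bookkeeping with Selfridge's identity $E(a+3)=3E(a)$ for $a>1$ (which is why the hypothesis $k>1$ is needed) and the elementary inequality $\cpx{3^{\ell}N}\le\cpx{N}+3\ell$.
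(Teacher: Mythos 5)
Your overall strategy is the same as the paper's: take a witness $m>1$ with $R(m)=h$, $\cpx{m}\equiv k\pmod 3$ and $D(m)\le 1$, observe via $E(a+3)=3E(a)$ that $n$ and $m$ differ by a power of $3$, transfer the bound $D\le 1$ across that power of $3$ using Proposition~\ref{D=0prop} and Corollary~\ref{D=1cor}, and then read off $\cpx{n}=k$ and finally $n>E(k-1)$ from Proposition~\ref{Dinterp}. Your case $i\ge 0$ (where $n=3^{i}m$ and stability of $m$ gives $\cpx{n}=3i+\cpx{m}=k$ directly) is handled correctly.

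The case $i<0$, however, contains a step that fails: you claim that $n=hE(k)\le E(k)$ ``forces $\cpx{n}\le k$.'' This implication is false. $E(k)$ is the \emph{largest} number of complexity at most $k$, but numbers below $E(k)$ can have complexity far exceeding $k$; for instance $23<27=E(9)$ while $\cpx{23}=11$. Since in this branch your later derivation of $D(n)\le 1$ rests on having already established $\cpx{n}=k$, the whole branch is left unsupported. The repair is the move you already make in the other case, and is what the paper does: from $m=3^{j}n$ and $D(m)\le 1$ you get $D_\st(n)=D_\st(m)\le 1$, hence $D(n)\le 1$ by Proposition~\ref{D=0prop} and Corollary~\ref{D=1cor}; then $D(n)=D_\st(n)=D_\st(m)=D(m)$, so equality holds in $D(3^{j}n)\le D(n)$, which by Proposition~\ref{oldpropsD}(3) gives $\cpx{3^{j}n}=\cpx{n}+3j$ and hence $\cpx{n}=\cpx{m}-3j=k$. (Alternatively, once $D(n)\le 1$ is in hand, $n>E(\cpx{n}-1)$ combined with $n\le E(k)$ and your correct lower bound $\cpx{n}\ge k$ also forces $\cpx{n}=k$.) With that one step replaced, your argument is correct and essentially coincides with the paper's.
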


\begin{proof}
Suppose $hE(k)$ is a natural number $n$.  We must have $n>1$ because having
$h=1/E(k)$ for $k>1$ would by Proposition~\ref{dRformulae} correspond to a
defect which is a nonzero integer, and these (by Proposition~\ref{stoldprops})
do not exist.

Then there is, by defintion of $h$, some number $m>1$ with $\cpx{m}\equiv
k\pmod{3}$ and $R(m)=h$, i.e., $m=hE(\cpx{m})$.  Since $\cpx{m}\equiv k\pmod{3}$
we see that $m=n3^\ell$ for some $\ell\in\mathbb{Z}$, where
$\ell=\frac{\cpx{m}-k}{3}$.  But also we have $D(m)\le 1$.  Therefore, whether
$\ell\ge0$ or $\ell\le0$, we must have $D_\st(n)\le 1$, and so, by
Proposition~\ref{D=0prop} and Corollary~\ref{D=1cor}, we have $D(n)\le 1$.  Then
by Proposition~\ref{oldpropsD}, we have $\cpx{m}=\cpx{n}+3\ell$.  From the
definition of $\ell$ we also have $\cpx{m}=k+3\ell$ and thus we conclude that
$\cpx{n}=k$.  And since $D(n)=1$ this means (by Proposition~\ref{Dinterp}) that
$n>E(k-1)$.
\end{proof}

We can now prove Theorem~\ref{tablethm}:

\begin{proof}[Proof of Theorem~\ref{tablethm}]
Suppose we want to determine the $r$'th largest number of complexity $k$.  This
is equivalent to determining the $r$'th largest value of $R(n)=\frac{n}{E(k)}$
that occurs among numbers $n$ of complexity $k$, which is equivalent to
determining the $r$'th smallest defect $\dft(n)$ that occurs among numbers $n$
of complexity $k$.

Now, we can easily determine the initial values $\alpha_0,\ldots,\alpha_r$ of
$\mathscr{D}^k$; let $h_0,\ldots,h_r$ be the corresponding values of the
function $R$, as given by Proposition~\ref{dRformulae}.  (For instance, for a
way of getting $h_0,\ldots,h_r$ directly rather than going by means of defects,
one may take the numbers $n$ given in Theorem~\ref{defect1}, group them by the
residues of $\cpx{n}$ modulo $3$, then sort them in decreasing order by $R(n)$;
note that the values of $R(n)$ obtained this way for any one congruence class of
$\cpx{n}$ modulo $3$ will have reverse order type $\omega$.) One may see
Tables~\ref{table0r}, \ref{table2r}, and \ref{table1r} for tables of the
resulting values of $h$.  Then certainly, the $r$'th largest number of
complexity $k$ is at most $h_r E(k)$, because the set of values of $R(n)$
occuring for $n$ with $\cpx{n}=k$ is a subset of the values of $R(n)$ occuring
for $n>1$ with $\cpx{n}\equiv k\pmod{3}$.  However, it will only be exactly the
$r$'th largest number of complexity $k$ if all of $h_1$ through $h_r$ do indeed
occur for some $n$ with $\cpx{n}=k$.

But, by Proposition~\ref{v3lem}, this is equivalent to just requiring that all
of the numbers $h_0E(k),\ldots,h_rE(k)$ are indeed whole numbers (and moreover
when this does occur one will have $h_i E(k)>E(k-1)$).  In other words, this is
the same as requiring
\[ k \ge \left\{
\begin{array}{ll}
-3\min_{s\le r} v_3(h_s)     & \textrm{if}\ k\equiv0\pmod{3} \\
-3\min_{s\le r} v_3(h_s) + 4 & \textrm{if}\ k\equiv1\pmod{3} \\
-3\min_{s\le r} v_3(h_s) + 2 & \textrm{if}\ k\equiv2\pmod{3}.
\end{array}
\right.
\]

So we have our $h_{r,a}$, and we can take $K_{r,a}$ to be given by this formula.
(Although since for $K_{0,0}$ it may not may make much sense to take
$K_{0,0}=0$, one may wish to take $K_{0,0}=3$ instead, as we have done in
Table~\ref{table0}.)

Combining this with Tables~\ref{table0r}, \ref{table2r}, and \ref{table1r}
yields Tables~\ref{table0}, \ref{table2}, and \ref{table1}, and proves the
theorem.
\end{proof}

\begin{rem}
While in the proof of Theorem~\ref{tablethm} we have referred to facts proved in
Section~\ref{mainsec}, none of the techniques deployed in that section are
necessary for the proof.  For instance, one can easily verify the values of the
$\overline{\mathscr{D}^a}(\omega)$ by directly determining the initial $\omega$
elements without needing to determine it for all $\omega^k$; indeed
Tables~\ref{table0r}, \ref{table2r}, and \ref{table1r} essentially do this
directly from Theorem~\ref{defect1}.
\end{rem}

\begin{table}[htb]
\caption{Table of $h_r$ for $k\equiv 0\pmod{3}$.}
\label{table0r}
\begin{tabular}{|r|r|r|}
\hline
$r$ & 					$h$ & Corresponding leader \\
\hline
$0$ &					$1$ & $3=3^1 2^0=2^1 3^0+1$ \\
$1$ &					$8/9$ & $8=2^3=2^1(3^1+1)$ \\
$2$ &					$64/81$ & $64=2^6$ \\
$3$ &					$7/9$ & $7=2^13^1+1$ \\
$4$ &					$20/27$ & $20=2^1(3^2+1)$ \\
$5$ &					$19/27$ & $19=2^13^2+1$ \\
$6$ &					$512/729$ & $512=2^9$ \\
$(\mathrm{for}\: n\ge4) \quad 2n-1$ &	$2/3+2/3^n$ & $2^1(3^{n-1}+1)$ \\
$(\mathrm{for}\: n\ge4) \quad 2n$ &	$2/3+1/3^n$ & $2^1 3^{n-1}+1$ \\
\hline
\end{tabular}
\end{table}

\begin{table}[htb]
\caption{Table of $h_r$ for $k\equiv 2\pmod{3}$.}
\label{table2r}
\begin{tabular}{|r|r|r|}
\hline
$r$ &					$h$ & Corresponding leader \\
\hline
$0$ &					$1$ & $2=2^1$ \\
$1$ &					$8/9$ & $16=2^4=2^2(3^1+1)$ \\
$2$ &					$5/6$ & $5=2^23^0+1$ \\
$3$ &					$64/81$ & $128=2^7$ \\
$4$ &					$7/9$ & $14=2^1(2^13^1+1)$ \\
$5$ &					$20/27$ & $40=2^2(3^2+1)$ \\
$6$ &					$13/18$ & $13=2^23^1+1$ \\
$7$ &					$19/27$ & $38=2^1(2^13^2+1)$ \\
$8$ &					$512/729$ & $1024=2^{10}$ \\
$(\mathrm{for}\: n\ge 4)\quad3n-3$ &	$2/3+2/3^n$ & $2^2(3^{n-1}+1)$ \\
$(\mathrm{for}\: n\ge 4)\quad3n-2$ &	$2/3+1/(2\cdot3^{n-1})$ &
						$2^23^{n-1}+1$\\
$(\mathrm{for}\: n\ge 4)\quad3n-1$ &	$2/3+1/3^n$ & $2^1(2^1 3^{n-1}+1)$\\
\hline
\end{tabular}
\end{table}

\begin{table}[htb]
\caption{Table of $h_r$ for $k\equiv 1\pmod{3}$ with $k>1$.}
\label{table1r}
\begin{tabular}{|r|r|r|}
\hline
$r$ &					$h$ & Corresponding leader \\
\hline
$0$ &					$1$ & $4=2^2=3^1+1$ \\
$1$ &					$8/9$ & $32=2^5$ \\
$2$ &					$5/6$ & $10=3^2+1$ \\
$3$ &					$64/81$ & $256=2^8$ \\
$(\mathrm{for}\: n\ge 2)\quad n+2$ &	$3/4+1/(4\cdot3^n)$ & $3^{n+1}+1$ \\
\hline
\end{tabular}
\end{table}

As a final note, it is worth making formal a statement mentioned in
Section~\ref{rawsintro}, that the numbers $hE(k)$ coming from
Theorem~\ref{tablethm} are almost exactly the $n$ with $D(n)\le 1$:

\begin{prop}
\label{coincicor}
A number $n$ has $D(n)\le 1$ if and only if there are some $\ell\ge0$, $k\ge
1$, and $r\ge 0$ such that $k\ge K_{r,k}$ and $3^\ell n = h_{r,k} E(k)$.
\end{prop}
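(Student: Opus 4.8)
The plan is to identify the two sets of numbers by running Theorem~\ref{tablethm} in one direction and the classification of Theorem~\ref{defect1} in the other, with stabilization used to bridge between ``$n$ itself'' and ``a sufficiently high power of $3$ times $n$''. I would treat the two implications separately.

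For the backward direction, suppose $3^\ell n=h_{r,a}E(k)$ with $k\equiv a\pmod 3$ and $k\ge K_{r,a}$. By Theorem~\ref{tablethm}, for such $k$ we have $h_{r,a}E(k)=E_r(k)$ with $\cpx{E_r(k)}=k$ and $E_r(k)>E(k-1)$; hence $3^\ell n>E(\cpx{3^\ell n}-1)$, which by the remark following Proposition~\ref{Dinterp} is exactly the statement $D(3^\ell n)\le 1$. Now I must pass from $D(3^\ell n)\le 1$ back down to $D(n)\le 1$. The inequality in Proposition~\ref{oldpropsD}(3) points the wrong way, but Proposition~\ref{stoldpropsD}(1) gives $D_\st(n)=\min_j D(3^j n)\le D(3^\ell n)\le 1$, and then Proposition~\ref{D=0prop} together with Corollary~\ref{D=1cor} upgrades this to ``$D(n)\le 1$ if and only if $D_\st(n)\le 1$'', so $D(n)\le 1$. (One could instead bypass Theorem~\ref{tablethm} here and cite Proposition~\ref{v3lem} directly, since $k\ge K_{r,a}\ge 2$ and $h_{r,a}$ is by construction a value of $R$ attached to a defect in the initial $\omega$ of $\mathscr{D}^a$, and $3^\ell n=h_{r,a}E(k)$ is a natural number.)

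For the forward direction, suppose $D(n)\le 1$. Choose $\ell_0\ge 1$ with $m:=3^{\ell_0}n$ stable (Theorem~\ref{basicstab}); then $m>1$, and $D(m)\le D(n)\le 1$ by Proposition~\ref{oldpropsD}(3). Set $a\equiv\cpx{m}\pmod 3$. Since $m$ is stable, $\cpx{3^j m}=\cpx{m}+3j$ and (using $E(k+3)=3E(k)$ for $k>1$, valid as $\cpx{m}\ge 2$) $E(\cpx{3^j m})=3^j E(\cpx{m})$, so $R(3^j m)=R(m)$ for all $j\ge0$ while $\cpx{3^j m}\equiv a\pmod 3$ and $\cpx{3^j m}\to\infty$. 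Next I would invoke Theorem~\ref{defect1}: grouping the numbers with integer defect at most $1$ by the residue of their complexity modulo $3$ and sorting each group in decreasing order of $R$ -- precisely the process that produces Tables~\ref{table0r}, \ref{table2r}, and~\ref{table1r} -- shows that the set of values $R(m')$ taken by numbers $m'$ with $D(m')\le 1$ and $\cpx{m'}\equiv a\pmod 3$ is exactly $\{h_{r,a}:r\ge0\}$. Hence $R(m)=h_{r,a}$ for a (unique) $r$. Finally, pick $\ell\ge\ell_0$ large enough that $k:=\cpx{3^{\ell-\ell_0}m}=\cpx{3^\ell n}\ge K_{r,a}$, possible because $K_{r,a}$ is finite and $\cpx{3^\ell n}\to\infty$; then $k\equiv a\pmod 3$ and $3^\ell n=R(3^\ell n)E(\cpx{3^\ell n})=h_{r,a}E(k)$, as required. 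The case $n=1$ needs no separate treatment, since $3^{\ell_0}\cdot 1$ is stable for $\ell_0\ge1$.

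I expect the only genuine care required is in the forward direction: keeping straight that $R$ becomes eventually constant under multiplication by $3$, that its eventual value is one of the tabulated $h_{r,a}$ (this is where Theorem~\ref{defect1}, and implicitly Propositions~\ref{dRformulae} and~\ref{v3lem}, enter), and getting the directions of the several $D$-versus-$D_\st$ comparisons right. None of these steps is deep, but the inequality directions are easy to invert by accident, so I would state each comparison with its exact reference.
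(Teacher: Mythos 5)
Your proof is correct and follows essentially the same route as the paper's: the backward direction via Theorem~\ref{tablethm} (or Proposition~\ref{v3lem}) together with the equivalence of $D\le 1$ and $D_\st\le 1$, and the forward direction by identifying $R(n)$ with one of the tabulated $h_{r,a}$ and scaling by a suitable power of $3$. The only difference is your stabilization detour in the forward direction, which is harmless but unnecessary, since $n$ itself already appears in the classification of Theorem~\ref{defect1} (and is in fact automatically stable by Corollary~\ref{2stab}), so the paper simply takes $h=R(n)$ directly.
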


\begin{proof}
We already know that if $k\ge K_{r,k}$ then, if we let $m=h_{r,k}E(k)$, that
$m>E(k-1)=E(\cpx{m}-1)$, i.e., $D(m)\le 1$, and so if $m=3^\ell n$, then
$D(n)\le 1$ by Corollary~\ref{D=1cor}.

Conversely, if $D(n)\le 1$, let $h=R(n)$; then by the construction of the
$h_{r,a}$ in the proof of Theorem~\ref{tablethm}, and the fact that the values
of $R(n)$ for numbers $n$ with $\cpx{n}$ in a fixed congruence class modulo $3$
have reverse order type $\omega$, there is some $r$ such that $h=h_{r,\cpx{n}}$.
We may then take any $k\ge K_{r,\cpx{n}}$ with $k\equiv \cpx{n}\pmod{3}$; then
$3^\ell n = h_{r,\cpx{n}}E(k) = h_{r,k}E(k)$ for $\ell=\frac{k-\cpx{n}}{3}$.
\end{proof}

\subsection*{Acknowledgements}
Work of the author was supported by NSF grants DMS-0943832 and DMS-1101373.

\end{document}